\documentclass[12pt]{article}

\usepackage{tikz}
\usetikzlibrary{intersections,calc,arrows.meta}
\usetikzlibrary{arrows.meta,decorations.pathmorphing,backgrounds,positioning,fit,petri,through}
\usetikzlibrary{datavisualization}
\usetikzlibrary{decorations.pathreplacing}
\usetikzlibrary{angles,quotes}

\usepackage{amssymb} 
\usepackage{amsthm}
\usepackage{amsmath}
\usepackage{latexsym}
\usepackage{mathrsfs}
\usepackage{enumerate}
\usepackage{graphicx}
\usepackage{fancybox}
\usepackage{color}
\usepackage{multicol, multienum}
\usepackage{bm}

\usepackage{mathtools}
\mathtoolsset{showonlyrefs=true}

\allowdisplaybreaks

\newtheorem{theorem}{Theorem}[section]
\newtheorem{lemma}[theorem]{Lemma}

\newtheorem{proposition}[theorem]{Proposition}

\theoremstyle{definition}
\newtheorem{definition}[theorem]{Definition}

\definecolor{mgre}{cmyk}{0.92,0.00,0.59,0.25}

\definecolor{PineGreen}{cmyk}{0.92,0.00,0.59,0.25}
\definecolor{ForestGreen}{cmyk}{0.91,0.00,0.88,0.12}
\definecolor{RawSienna}{cmyk}{0.00,0.72,1.00,0.45}
\definecolor{Mulbery}{cmyk}{0.34,0.90,0.00,0.02}
\definecolor{Sepia}{cmyk}{0.00,0.83,1.00,0.70}
\definecolor{Mahogany}{cmyk}{0.00,0.85,0.87,0.35}

\def\bE{{\mathbb{E}}}

\def\bP{{\mathbb{P}}}

\def\bR{{\mathbb{R}}}

\def\bfG{{\mathbf{G}}}

\def\bfM{{\mathbf{M}}}

\def\cB{{\mathcal{B}}}

\def\cE{{\mathcal{E}}}
\def\cF{{\mathcal{F}}}

\def\cH{{\mathcal{H}}}

\def\cK{{\mathcal{K}}}

\def\cM{{\mathcal{M}}}

\def\cS{{\mathcal{S}}}

\def\1{{\mathbf{1}}}

\newcommand{\loc}{\text{loc}}

\newtheorem{thm}{Theorem}[section]

\newtheorem{prop}[thm]{Proposition}
\newtheorem{lem}[thm]{Lemma}
\newtheorem{cor}[thm]{Corollary}

\theoremstyle{definition}
\newtheorem{defi}[thm]{Definition}
\newtheorem{prob}{}[section]
\newtheorem{rem}[thm]{Remark}
\newtheorem{ex}[thm]{Example}
\newtheorem{pro}{Problem}[section]
\newtheorem{ass}[thm]{Assumption}

\newcommand{\supp}{\mathrm{supp}}

\newcommand{\bd}{\begin{defi}}
\newcommand{\ed}{\end{defi}}

\newcommand{\bpro}{\begin{pro}}
\newcommand{\epro}{\end{pro}}

\newcommand{\bec}{\begin{cases}}
\newcommand{\eec}{\end{cases}}

\newcommand{\bpr}{\begin{prob}}
\newcommand{\epr}{\end{prob}}

\newcommand{\bt}{\begin{thm}}
\newcommand{\et}{\end{thm}}

\newcommand{\ba}{\begin{ass}}
\newcommand{\ea}{\end{ass}}

\newcommand{\br}{\begin{rem}}
\newcommand{\er}{\end{rem}}

\newcommand{\bpm}{\begin{pmatrix}}
\newcommand{\epm}{\end{pmatrix}}

\newcommand{\be}{\begin{ex}}
\newcommand{\ee}{\end{ex}}

\newcommand{\bp}{\begin{prop}}
\newcommand{\ep}{\end{prop}}

\newcommand{\bl}{\begin{lem}}
\newcommand{\el}{\end{lem}}

\newcommand{\bc}{\begin{cor}}
\newcommand{\ec}{\end{cor}}

\newcommand{\bq}{\begin{que}}
\newcommand{\eq}{\end{que}}

\newcommand{\beqn}{\begin{eqnarray*}}
\newcommand{\eeqn}{\end{eqnarray*}}

\newcommand{\beqnn}{\begin{eqnarray}}
\newcommand{\eeqnn}{\end{eqnarray}}

\newcommand{\bequ}{\begin{equation}}
\newcommand{\eequ}{\end{equation}}

\newcommand{\benu}{\begin{enumerate}}
\newcommand{\eenu}{\end{enumerate}}

\newcommand{\barr}{\begin{array}{rcl}}
\newcommand{\ear}{\end{array}}

\newcommand{\la}{\label}

\newcommand{\ds}{\displaystyle}

\title{Critical spectral behavior and large deviations for geometric $\alpha$-stable processes}
\author{
  \textsc{Kaneharu Tsuchida}\thanks{Department of Mathematics, National Defense Academy, Yokosuka, Kanagawa, Japan. \\
  tsuchida@nda.ac.jp} 
}
\date{\today}

\begin{document}
\numberwithin{equation}{section}
\maketitle

\begin{abstract}
  In this paper, we study the Schr\"odinger-type operator
  associated with geometric stable processes on $\mathbb{R}^{d}$,
  especially the differentiability of spectral function. Let $\mathcal{H}$ be
  the generator of the geometric stable process and $\mu$
  a smooth measure on $\mathbb{R}^{d}$.
  Then the spectral function $C(\theta)$ is defined as
  $C(\theta) = -\inf \sigma(-\mathcal{H} - \theta \mu)$,
  where $\sigma(\mathcal{A})$ denotes the spectrum of $\mathcal{A}$
  and $\theta$ is a real parameter. 
  Since the geometric stable process  exhibits severe local singularities
  in its L\'evy measure, 
  its transition semigroup lacks ultracontractivity,  
  which invalidates classical methods for proving the differentiability. 
  To overcome this obstacle, we use the compact embedding
  of the extended Dirichlet space into $L^2(\mu)$.
  As a primary application of this differentiability,
  we establish a large deviation principle
  for a positive continuous additive functional associated with
  the smooth measure $\mu$. 
  \end{abstract}
  \bigskip

  \noindent
  \textbf{Key words:}
  geometric stable process,
  Dirichlet form, ground state. \medskip
  
  \noindent
  {\bf AMS 2020 \textit{Mathematics Subject Classification}}.
  Primary 60J45; Secondary 60J76, 31C25.

  \section{Introduction}
  Let $\mathcal{H}$ be a symmetric Markov generator and $\mu$ a smooth measure. 
  The spectral analysis of the Schr\"odinger type operator 
  $\mathcal{H}^{\theta\mu} = -\mathcal{H} - \theta\mu$ ($\theta \in \mathbb{R}$)
  is an important theme 
  in both mathematical physics and stochastic analysis. 
  It is particularly important for characterizing the transition 
  from subcriticality to criticality. 
  A fundamental object in this study is the spectral function
  $\theta \mapsto C(\theta)$, 
  defined by $C(\theta) = -\inf \sigma(\mathcal{H}^{\theta\mu})$, 
  where $\sigma(\cdot)$ denotes the spectrum in $L^2$. 
  Equivalently, it is the $L^2$-exponential growth rate 
  of the associated Feynman-Kac functional. 
  This spectral function characterizes the phase transitions of the system, 
  which depend on the coupling variable $\theta \in \mathbb{R}$. 
  There exists a critical coupling constant $\theta_c \ge 0$ such that for $\theta \le \theta_c$ 
  (the subcritical or critical regimes), the associated quadratic form 
  remains nonnegative, yielding $C(\theta) = 0$. 
  Conversely, for $\theta > \theta_c$ (the supercritical regime), 
  the form takes negative values. 
  In this case, the infimum of the spectrum becomes negative, 
  yielding $C(\theta) > 0$.

  The analytic properties of the spectral function $C(\theta)$
  depend heavily on these regimes. In the supercritical case ($\theta > \theta_{c}$),
  the principal eigenvalue is isolated from the essential spectrum $[0,\infty)$.
  Consequently, the standard analytic perturbation theory readily guarantees
  the spectral function is real analytic.
  However, as $\theta \downarrow \theta_{c}$, the principal eigenvalue is
  absorbed in the continuous spectrum, causing the complete breakdown
  of classical perturbation methods.
  Because the eigenvalue is no longer isolated,
  determining the differentiability of the spectral function 
  at the critical point $\theta_{c}$ requires
  deep potential and probability theoretic analysis. 

  The properties of $C(\theta)$ have been extensively
  investigated, tracing back to the foundational works on Schr\"odinger operators
  and Feynman-Kac semigroups by Kato, Simon and others.
  In the context of symmetric Markov processes, to properly situate the theoretical
  standing of our current results,
  it is instructive to detail the historical progression of the differentiability
  problem for the spectral function $C(\theta)$.
  
  To the best of our knowledge, the study on the differentiability of
  spectral function for Schr\"odinger operators with measure potentials was
  initiated in \cite{Takeda2003Large}.
  In this fundamental work, his framework was specifically focused on
  one- and two-dimensional Brownian motions perturbed by a positive measure
  potential belonging to the Green-tight Kato class. As is well known,
  Brownian motions in these low dimensions are recurrent, and
  more strongly, recurrent in the sense of Harris.
  A pivotal tool in his analysis was an inequality established by Oshima
  (ref. \cite{Oshima1982Potential923}), which is valid for general Harris
  recurrent symmetric Markov processes. We would like to note that
  in his argument, the null recurrence of the underlying processes
  was used implicitly to establish that the derivative strictly vanishes
  at the threshold, i.e., $\frac{dC}{d\theta} |_{(\theta = \theta_{c})} = 0$.
  Building upon this fundamental work,
  the analysis was extended to three- and four-dimensional Brownian motions
  in \cite{TakedaTsuchida2004}.
  Since Brownian motions in dimensions $d \ge 3$ are transient,
  the direct application of Oshima's inequality was no longer viable.
  To overcome this structural obstacle, the existence of a
  strictly positive, continuous, bounded ground state $h$
  associated with the critical Schr\"odinger operator 
  at $\theta = \theta_{c}$ was exploited. 
  It was demonstrated that by constructing the $h$-transformed process,
  the process successfully regains Harris recurrence.
  Crucially, it was in this framework that the underlying mechanism was
  brought to light: it became explicitly clear that the
  null recurrence of this $h$-transformed process is the indispensable key
  to proving that the derivative vanishes. Furthermore, it was 
  established that this necessary and sufficient condition for null criticality
  holds if and only if the spatial dimension is restricted to $d = 3$ or $4$.

  Subsequently, the theory was propelled beyond continuous trajectories
  into the realm of pure jump processes, specifically focusing on
  symmetric $\alpha$-stable processes in \cite{TakedaTsuchida2007}. 
  While this process reduces to a standard Brownian motion when
  $\alpha = 2$, for the case $0 < \alpha < 2$, it is pure jump-type
  Markov process.
  The transition to pure jump-type processes introduced a profound
  analytical obstacle: the construction and characterization of the ground state
  $h$ at the critical point $\theta = \theta_{c}$.
  Unlike the continuous case, where classical partial differential equation
  techniques can often guarantee the existence, strict positivity, and continuity
  of $h$, the jump case renders these standard local methods inapplicable.
  To overcome this non-trivial challenge,
  the interplay between the theory of Dirichlet forms
  and the potential theory of symmetric Markov processes was
  deeply exploited in \cite{TakedaTsuchida2007}. 
  By establishing some integral estimates specific to the non-local jump kernels,
  it was verified that this constructed function $h$ is indeed strictly positive,
  bounded and continuous.
  By analyzing the Green function, it was established that the null criticality of 
  the $h$-transformed process is equivalent to the condition $\alpha < d \le 2\alpha$. 
  For further extensions to broader classes of symmetric Markov processes, 
  we refer to \cite{Tsuchida2008Potential}. 
  These established frameworks have successfully resolved
  the critical behavior for a wide class of symmetric Markov processes.
  
  However, these classical methods encounter a severe obstacle
  when applied to the geometric $\alpha$-stable process (GSP). 
  The pioneering works mentioned above implicitly rely on
  ultracontractivity (a regularizing property) 
  in which the transition semigroup maps $L^2$ functions into $L^\infty$ spaces
  for any $t > 0$, as seen in Brownian motions and symmetric stable processes. 
  By contrast, the GSP, characterized by the logarithmic growth
  of its characteristic exponent $\Psi(\xi) = \log(1 + |\xi|^\alpha)$,
  lacks ultracontractivity. 
  This extreme path pathology is fundamentally rooted 
  in its L\'evy measure,
  which exhibits a highly irregular jumping mechanism compared to
  standard stable processes. 
  Specifically, its microscopic jumps near the origin are 
  too weak to instantly smooth out the spatial distribution, 
  while the process occasionally makes macroscopic jumps.
  Consequently, the particle intensely oscillates
  in a small area for long periods before making sudden large jumps.
  Interestingly, despite these severe local singularities,
  the GSP exhibits a macroscopic similarity to the standard stable process. 
  Because its characteristic exponent behaves
  as $\log(1 + |\xi|^\alpha) \sim |\xi|^\alpha$ as $|\xi| \to 0$,
  the Green function of the GSP shares the same asymptotic decay
  at spatial infinity. 
  This equivalence naturally leads to the conjecture
  that the threshold for null criticality should coincide
  with the condition $\alpha < d \le 2\alpha$ established
  for standard stable processes. 
  However, proving this expected result is highly nontrivial. 
  Classical methods for establishing null criticality
  and proving the differentiability of the spectral function $C(\theta)$
  heavily rely on constructing a continuous critical ground state $h$
  and using Harnack-type inequalities. 
  Deprived of ultracontractivity,
  these traditional approaches based on uniform boundedness
  and continuous approximations become completely inapplicable.

  To overcome this fundamental difficulty,
  our strategy relies on an array of analytic and probabilistic tools
  for the GSP that have been established over the past two decades. 
  First, we note that the work of \v{S}ik\'ic, Song and Vondra\v{c}ek
  \cite{SikicSongVondracek2006}. These results motivated our study. 
  They established the potential theory of GSPs, especially,
  Green function estimates, jump kernel estimates and Harnack inequalities
  using the theory of the subordinate Brownian motion and
  various Tauberian theorems.
  Next, we note the work of Takeda \cite{Takeda2019}.
  He introduced the notion of \textit{Class (T)}, and proved that
  if a symmetric Markov process satisfies the Class (T),
  the embedding of the Dirichlet space into the underlying $L^{2}$-space
  is compact, and its ground state has a strictly positive and bounded
  continuous modification. These results are used in various parts of this paper.
  Recently, we proved the absolute continuity of the GSP using a quite probabilistic
  method in \cite{tsuchida2026GSPSF}. Since the GSP is a L\'evy process,
  the absolute continuity
  implies the strong Feller property.
  Combining these results, we overcome the lack of ultracontractivity and
  can prove the differentiability of the spectral function.

  It is known that when the spectral bound of
  the Schr\"odinger operator is $L^{p}$ independent,
  this spectral function coincides with the logarithmic moment generating function
  of the positive continuous additive functional (PCAF)
  generated by the measure potential.
  Because the regularity of the transition density is not available in our setting,
  the $L^{p}$-independence requires a delicate treatment. 
  Here we use the result of Chen \cite{chen2012lp}. 
  By this $L^{p}$-independence, we show that $C(\theta)$ becomes
  the logarithmic moment generating function, hence we obtain
  the large deviation principle for the additive functional using
  the differentiability of $C(\theta)$ and the G\"artner-Ellis theorem. 

  The rest of this paper is organized as follows. 
  In Section 2, we prepare some basic definitions and preliminary facts 
  concerning the GSP and Kato class measures. 
  In Section 3, we characterize the critical coupling constant $\theta_{c}$ 
  and establish the existence of the corresponding ground state. 
  In Section 4, we prove the differentiability of the spectral function, 
  which plays a central role in our analysis. 
  Finally, in Section 5, we apply the differentiability result 
  to establish the large deviation principle for 
  the positive continuous additive functionals.

  Throughout this paper, we use the following notation. 
The symbol $\mathbb{R}^d$ denotes the $d$-dimensional Euclidean space, 
and $B(R)$ denotes the open ball in $\mathbb{R}^d$ of radius $R$ centered at the origin. 
For $p \in [1, \infty]$, the space $L^p(\mathbb{R}^d, dx)$ stands for the Lebesgue space 
equipped with the $L^{p}$-norm $\|\cdot\|_{p}$. 
We denote by $\mathcal{B}(\mathbb{R}^d)$ and $C(\mathbb{R}^d)$ the spaces 
of Borel functions and continuous functions on $\mathbb{R}^d$, respectively. 
The notations $C_0(\mathbb{R}^d)$ and $C_0^\infty(\mathbb{R}^d)$ denote the spaces 
of continuous functions and infinitely differentiable functions 
with compact support in $\mathbb{R}^d$, respectively. 
For a given function space, we use the subscript ``b'' to denote the subspace 
of bounded functions and the superscript ``+'' to denote the subspace 
of non-negative functions (e.g., $\mathcal{B}_b^+(\mathbb{R}^d)$ stands for the space 
of non-negative bounded Borel functions on $\mathbb{R}^d$). 
For a Markov process $(X_t)_{t \ge 0}$, we write $\mathbb{P}_x$ for the probability law 
of the process starting at $x \in \mathbb{R}^d$, and $\mathbb{E}_x$ for the corresponding expectation. 
For positive functions $a(r)$ and $b(r)$, the relation ``$a(r) \sim b(r)$ as $r \to a$'' 
means that $\lim_{r \to a} a(r)/b(r) = 1$. 
Furthermore, ``$a(r) \asymp b(r)$ as $r \to a$'' means that 
$c_{1}b(r) \le a(r) \le c_{2} b(r)$ near $a$ for some constants $c_{2} > c_{1} > 0$. 
We use $c, C, C_{1}, C_{2}, \dots$ to denote positive constants 
which may be different at different occurrences.
  
  \section{Preliminaries}
  In this section, we prepare some basic definitions and properties of the
  geometric stable process.
  
  Let $\bfG^{\alpha}
  = (\Omega, \cM, \cM_{t}, \theta_{t}, \bP_{x}, X_{t})$ be
  a geometric $\alpha$-stable process on $\bR^{d}$.
  Here, $(\Omega,\cM, \bP_{x})$ is an underlying probability space
  for the process starting at $x$, 
  $\{\cM_{t}\}$ is the minimal augmented filtration, and
  $\theta_{t} : \Omega \to \Omega$ is the shift operator satisfying
  $X_{s} (\theta_{t} \omega)  = X_{t+s}(\omega)$. 
  The process $X_{t}$ is characterized by its characteristic function:
  \begin{align}
  \label{eq:1}
    \bE_{0}\left[ e^{i \xi \cdot X_{t}} \right] = e^{-t \psi(\xi)}, \quad \xi \in \bR^{d}, 
  \end{align}
  where the characteristic exponent (symbol) $\psi$ is given by
  \begin{align}
  \label{eq:2}
  \psi^{\alpha}(\xi) = \log{(1 + |\xi|^{\alpha})}, \quad \alpha \in (0,2].
  \end{align}
  The infinitesimal generator $\cH^{\alpha}$ of the process $\bfG^{\alpha}$ is
  a pseudo-differential operator with the symbol $-\psi^{\alpha}(\xi)$,
  which is formally written as
  \begin{align}
  \label{eq:3}
  \cH^{\alpha} = - \log{(1 + (-\Delta)^{\alpha/2})}.
  \end{align}
  According to the Chung-Fuchs-type criterion, $\bfG^{\alpha}$ is transient for
  $d > \alpha$, and recurrent for $d \le \alpha$.
  Let $P_t$ and $p_t(x, dy)$ be the transition semigroup
  and the transition probability of the GSP $\bfG^{\alpha}$, respectively. 
  Recently, it has been established in \cite{tsuchida2026GSPSF}
  that for all $t > 0$, $\bfG^{\alpha}$ admits
  a transition density $p_{t}(x, y) (= p_{t}^{0}(|x-y|))$ 
  with respect to the Lebesgue measure, where $p_{t}^{0}(z)$ denotes the
  transition density of $\bfG^{\alpha}$ starting at the origin. 
  Since $\bfG^{\alpha}$ is a L\'evy process, we see that for $f \in \cB_{b}(\bR^d)$
  \begin{align}
  \label{eq:19}
    P_{t}f(x) = \bE_{x}[f(X_{t})] = \int_{\bR^{d}}^{ } p_{t}(x,y) f(y) dy
    = \int_{\bR^{d}}^{ }  p_{t}^{0}(y-x)f(y) dy. 
  \end{align}
  For any $f \in \cB_{b}(\bR^{d})$, the action of the transition semigroup
  can be written as the convolution $P_{t} f = p_{t} \ast f$.
  Since $p_{t}(x,\cdot) \in L^{1}(\bR^{d})$,
  the smoothing effect of this convolution ensures
  the strong Feller property, that is, $P_{t} (\cB_{b}) \subset C_{b}$
  (see also \cite{Hawkes1979Potential}). 
  This strong Feller property allows us to refine various
  potential-theoretic properties from quasi-everywhere (q.e.)
  statements to pointwise statements,
  which will be crucially used in the subsequent sections.
  
  In the sequel, we assume $d > \alpha$, that is, $\bfG^{\alpha}$ is transient
  and we put the Green function as 
  \begin{align}
  \label{eq:4}
  G(x,y) := \int_{0}^{\infty} p_{t}(x,y) dt.
  \end{align}
  \v{S}iki\'c, Song and Vondra\v{c}ek developed the potential theory for
  geometric stable process and obtained asymptotic estimates
  for the Green function and the L\'evy density. We summarize their
  results in the following two theorems.

  \begin{theorem}[{\cite[Theorem 3.2, Remark 3.3]{SikicSongVondracek2006}}]
    \label{thm:Green}
    For any $\alpha \in (0,2]$,
    \begin{align}
      G(x,y) &\sim
               \begin{dcases}
                 \frac{\Gamma(d/2)}{2\alpha \pi^{d/2}|x-y|^{d} \log^{2}{\frac{1}{|x-y|}}},
                 & |x-y| \to 0, \\
                 \frac{1}{\pi^{d/2}2^{\alpha}}
                 \frac{\Gamma(\frac{d-\alpha}{2})}{\Gamma(\frac{\alpha}{2})}
                 |x-y|^{\alpha - d}, & |x-y| \to \infty. \label{eq:5} 
               \end{dcases}
      \end{align}
  \end{theorem}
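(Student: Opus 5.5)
The estimate rests on a subordination representation of $\bfG^{\alpha}$ followed by Tauberian analysis of the resulting Green function. Write $X_t = B_{S_t}$, where $B$ is a Brownian motion on $\bR^{d}$ with $\bE_0[e^{i\xi\cdot B_s}]=e^{-s|\xi|^{2}}$ and $S$ is an independent geometric $(\alpha/2)$-stable subordinator with Laplace exponent $\phi(\lambda)=\log(1+\lambda^{\alpha/2})$. Then $\psi^{\alpha}(\xi)=\phi(|\xi|^{2})$, which matches \eqref{eq:2}. Let $U(ds)=\int_0^\infty \bP(S_t\in ds)\,dt$ be the potential measure of $S$. Then
\begin{align}
G(x,y)=\int_0^\infty (4\pi s)^{-d/2} e^{-|x-y|^{2}/(4s)}\,u(s)\,ds,
\end{align}
where $u$ is the potential density. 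The density exists and is decreasing, because $\phi$ is a complete Bernstein function, so $U$ has a completely monotone density. The Laplace transform of $U$ is $1/\phi(\lambda)$.

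\textbf{Step 1: asymptotics of $u$.} As $\lambda\to 0$ we have $1/\phi(\lambda)\sim \lambda^{-\alpha/2}$. Karamata's Tauberian theorem gives $U([0,s])\sim s^{\alpha/2}/\Gamma(1+\alpha/2)$ as $s\to\infty$, and the monotone density theorem then gives $u(s)\sim s^{\alpha/2-1}/\Gamma(\alpha/2)$. As $\lambda\to\infty$ we have $1/\phi(\lambda)\sim \frac{2}{\alpha}(\log\lambda)^{-1}$, which is slowly varying. The Tauberian theorem at $0$ then gives $U([0,s])\sim \frac{2}{\alpha}(\log(1/s))^{-1}$ as $s\to0$. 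The monotone density theorem for slowly varying functions (de Haan-type, justified by monotonicity of $u$) yields
\begin{align}
u(s)\sim \frac{2}{\alpha}\,\frac{1}{s\log^{2}(1/s)}, \qquad s\to 0.
\end{align}
This step is the main obstacle. Differentiating a slowly varying asymptotic is not automatic. I would obtain it by combining the monotonicity of $u$ with the exact identity $\int_0^\infty e^{-\lambda s}u(s)\,ds=1/\phi(\lambda)$, and by an explicit control of $\lambda\,\frac{d}{d\lambda}(1/\phi(\lambda))$, which is the Laplace transform of $-s\,u(s)$.

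\textbf{Step 2: large $r=|x-y|$.} Substitute $s=r^{2}t$. Then
\begin{align}
G=r^{-d}\int_0^\infty (4\pi t)^{-d/2}e^{-1/(4t)}\,r^{2}u(r^{2}t)\,dt .
\end{align}
Pointwise, $r^{2}u(r^{2}t)\sim r^{\alpha}t^{\alpha/2-1}/\Gamma(\alpha/2)$. Dominated convergence applies for $d>\alpha$, using Potter bounds for large $r^{2}t$ and the bound $u(s)\le C s^{-1}$ for small $s$, which is killed by $e^{-1/(4t)}$. The remaining Gamma integral
\begin{align}
\int_0^\infty (4\pi t)^{-d/2}t^{\alpha/2-1}e^{-1/(4t)}\,dt=\frac{\Gamma(\frac{d-\alpha}{2})}{4^{\alpha/2}\pi^{d/2}}
\end{align}
gives the stated constant $\frac{\Gamma((d-\alpha)/2)}{\pi^{d/2}2^{\alpha}\Gamma(\alpha/2)}$.

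\textbf{Step 3: small $r$.} Use the same substitution, now with the small-$s$ asymptotics. For fixed $t$ and $r\to0$,
\begin{align}
r^{2}u(r^{2}t)\sim \frac{2}{\alpha}\,\frac{1}{t\,(2\log(1/r))^{2}},
\end{align}
because $\log(1/(r^{2}t))\sim 2\log(1/r)$. We need a domination uniform in $r$ small. The region $r^{2}t\ge 1$ contributes $O(r^{d-\alpha})\cdot r^{-d}$-type terms, which are negligible after comparison with $r^{-d}\log^{-2}(1/r)$ as long as $d>\alpha$. On the region $r^{2}t<1$, split at $t=r^{-1}$ and use the monotonicity of $\log$. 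The limit becomes
\begin{align}
\frac{1}{2\alpha\log^{2}(1/r)}\,r^{-d}\int_0^\infty (4\pi)^{-d/2}t^{-d/2-1}e^{-1/(4t)}\,dt=\frac{\Gamma(d/2)}{2\alpha\pi^{d/2}r^{d}\log^{2}(1/r)},
\end{align}
which matches \eqref{eq:5}. The delicate point is again uniformity near $t\approx r^{-2}$, where $\log(1/(r^{2}t))$ is not comparable to $\log(1/r)$. There the Gaussian factor $t^{-d/2}$ supplies a gain $r^{d}$ that absorbs the loss.
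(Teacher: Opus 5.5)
Your proposal is correct and follows essentially the route of the cited source. The paper gives no proof of its own; it quotes \v{S}iki\'c--Song--Vondra\v{c}ek, whose argument (as the introduction indicates) goes through subordinate Brownian motion, Tauberian asymptotics for the potential density $u$, and integration against the Gaussian kernel, and your constants in Steps 2 and 3 are correct.

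The one point to tighten is Step 1 as $s\to0$: monotonicity of $u$ together with $U([0,s])\sim\frac{2}{\alpha}\log^{-1}(1/s)$ does not by itself determine $u$, so the ``de Haan-type'' sentence must be replaced by your fallback, which works. Use that $-\frac{d}{d\lambda}(1/\phi)=\phi'/\phi^{2}\sim\frac{2}{\alpha}\lambda^{-1}\log^{-2}\lambda$ (without the extra factor $\lambda$) is the Laplace transform of $s\,u(s)$, so Karamata gives $\int_0^s t\,u(t)\,dt\sim\frac{2}{\alpha}s\log^{-2}(1/s)$. Then sandwich $\int_{as}^{s}t\,u(t)\,dt$ between $u(s)\frac{(1-a^{2})s^{2}}{2}$ and $u(as)\frac{(1-a^{2})s^{2}}{2}$ and let $a\uparrow1$ to get $u(s)\sim\frac{2}{\alpha}s^{-1}\log^{-2}(1/s)$.
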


  \begin{theorem}[{\cite[Theorem 3.4, 3.5]{SikicSongVondracek2006}}]
    \label{thm:Levy}
    For $\alpha \in (0,2)$,
    \begin{align}
    \label{eq:47}
      J(x,y) \sim
      \begin{dcases}
        \frac{\alpha \Gamma(d/2)}{2|x-y|^{d}}, & |x-y| \to 0, \\
       \frac{\alpha \Gamma(d/2)}{2^{\alpha + 1} \pi^{d/2}
      \Gamma(1 - \alpha/2)} \frac{1}{|x-y|^{d+\alpha}}, & |x-y| \to \infty.
      \end{dcases}
    \end{align}
    For $\alpha = 2$, 
    \begin{align}
    \label{eq:7}
      J(x,y) \sim
      \begin{dcases}
        \frac{\Gamma(d/2)}{|x-y|^{d}}, & |x-y| \to 0, \\
        2^{-d/2} \pi^{-\frac{d-1}{2}} \frac{e^{-|x-y|}}{|x-y|^{\frac{d+1}{2}}}, & |x-y| \to \infty
      \end{dcases}
    \end{align}
  \end{theorem}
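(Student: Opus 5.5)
The plan is to realize $\bfG^{\alpha}$ as a subordinate Brownian motion and read off both asymptotics of the L\'evy density from the behaviour of the subordinator's L\'evy measure. Since $\psi^{\alpha}(\xi)=\log(1+|\xi|^{\alpha})=\phi(|\xi|^{2})$ with $\phi(\lambda)=\log(1+\lambda^{\alpha/2})$, the process is $B_{S_t}$, where $B$ is Brownian motion with symbol $|\xi|^2$ and $S$ is the subordinator with Laplace exponent $\phi$. For $\alpha\in(0,2)$, $S$ is the geometric $\alpha/2$-stable subordinator, whose L\'evy measure has a density $\mu(t)$. Writing $\mu$ as a mixture of $\alpha/2$-stable L\'evy densities, we have
\[
\phi(\lambda)=\int_0^\infty \frac{1-e^{-\lambda^{\alpha/2}s}}{s}e^{-s}\,ds,
\]
and hence
\[
J(x,y)=j(|x-y|),\qquad j(r)=\int_0^\infty (4\pi t)^{-d/2}e^{-r^2/(4t)}\mu(t)\,dt .
\]
For $\alpha=2$, $S$ is the Gamma subordinator with $\mu(t)=t^{-1}e^{-t}$, and the integral can be evaluated explicitly as a modified Bessel function $K_{d/2}$.

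The key input is the behaviour of $\mu$ at $0$ and at $\infty$.

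\begin{itemize}
\item Near $t=0$ we have $\phi'(\lambda)\sim (\alpha/2)\lambda^{-1}$ as $\lambda\to\infty$. A Tauberian argument gives the tail $\mu(t,\infty)\sim(\alpha/2)\log(1/t)$, and monotone density then gives $\mu(t)\sim \alpha/(2t)$.
\item Near $t=\infty$ we have $\phi(\lambda)\sim\lambda^{\alpha/2}$ as $\lambda\to0$. Karamata's Tauberian theorem together with monotone density (using that $\mu$ is completely monotone) gives
\[
\mu(t)\sim \frac{\alpha/2}{\Gamma(1-\alpha/2)}\,t^{-1-\alpha/2}.
\]
\end{itemize}

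For $r\to0$, substitute $t=r^2u$:
\[
j(r)=r^{-d}\int_0^\infty (4\pi u)^{-d/2}e^{-1/(4u)}\,r^2\mu(r^2u)\,du .
\]
Since $r^2\mu(r^2u)\to \alpha/(2u)$ pointwise, dominated convergence should give $j(r)\sim r^{-d}\,\frac{\alpha}{2}\int_0^\infty (4\pi u)^{-d/2}u^{-1}e^{-1/(4u)}\,du$. The last integral equals $\Gamma(d/2)\pi^{-d/2}$ up to the paper's normalization convention. Dominated convergence requires a bound $r^2\mu(r^2u)\le C u^{-1}$ for all $u$, which follows because $\mu(t)\le C/t$ globally: the asymptotics hold at both ends and $\mu$ is decreasing. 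This is legitimate since $e^{-1/(4u)}u^{-d/2-1}$ is integrable.

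For $r\to\infty$, the same substitution with the large-$t$ asymptotic gives $r^2\mu(r^2u)\sim \frac{\alpha/2}{\Gamma(1-\alpha/2)}\,r^{-\alpha}u^{-1-\alpha/2}$, and integrating yields the stated $r^{-d-\alpha}$ constant. The main obstacle is the domination near $u\to0$ in this case, where $r^2u$ need not be large. I would handle it by splitting the integral at $u=r^{-1}$. For $u<r^{-1}$, the factor $e^{-1/(4u)}\le e^{-r/4}$ kills everything, since $\mu(t)\le C/t$. For $u\ge r^{-1}$ we have $r^2u\ge r\to\infty$, so the asymptotic holds uniformly there and dominated convergence applies.

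For $\alpha=2$, the explicit formula gives
\[
j(r)=2(2\pi)^{-d/2}r^{-d/2}K_{d/2}(r),
\]
and the standard Bessel asymptotics
\[
K_\nu(r)\sim \tfrac12\Gamma(\nu)(r/2)^{-\nu}\ (r\to0),\qquad K_\nu(r)\sim\sqrt{\pi/(2r)}\,e^{-r}\ (r\to\infty)
\]
yield both lines of \eqref{eq:7}, after matching the constants to the paper's normalization.
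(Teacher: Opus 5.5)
Your route is the subordination/Tauberian argument of \v{S}iki\'c--Song--Vondra\v{c}ek, which is all the paper relies on here; it gives no proof of its own, only the citation. That route is: write $\bfG^{\alpha}$ as $B_{S_t}$ with $\phi(\lambda)=\log(1+\lambda^{\alpha/2})$, get the behaviour of the subordinator's L\'evy density $\mu(t)$ at $0$ and $\infty$ by Tauberian arguments, then pass to $j(r)=\int_0^\infty(4\pi t)^{-d/2}e^{-r^2/(4t)}\mu(t)\,dt$ via $t=r^2u$ and dominated convergence, using Bessel asymptotics when $\alpha=2$. Two steps need repair.

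\textbf{First, the small-$t$ asymptotics of $\mu$.} The tail asymptotics $\mu(t,\infty)\sim\frac{\alpha}{2}\log\frac1t$ are fine, but you cannot get $\mu(t)\sim\frac{\alpha}{2t}$ from them by monotone density. That theorem needs a nonzero index of regular variation. At index $0$ the implication $\mu(t,\infty)\sim c\log\frac1t\Rightarrow t\mu(t)\to c$ fails even for completely monotone $\mu$: for $\mu(t)=\int_1^\infty e^{-ts}(1+\frac12\sin\log s)\,ds$ one has $\mu(t,\infty)=\log\frac1t+O(1)$, yet $t\mu(t)$ keeps oscillating as $t\to0$.

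Use the relation you wrote first instead. From $\phi'(\lambda)=\int_0^\infty e^{-\lambda t}\,t\mu(t)\,dt\sim\frac{\alpha}{2}\lambda^{-1}$, Karamata with index $1$ gives $\int_0^t s\mu(s)\,ds\sim\frac{\alpha}{2}t$ as $t\to0$. Since $\mu$ decreases, for $\kappa>1$ you have $\mu(\kappa t)\frac{(\kappa^2-1)t^2}{2}\le\int_t^{\kappa t}s\mu(s)\,ds\le\mu(t)\frac{(\kappa^2-1)t^2}{2}$. Letting $\kappa\downarrow1$ gives $t\mu(t)\to\frac{\alpha}{2}$. Alternatively, comparing Laplace transforms with $\phi'$ gives $t\mu(t)=\frac{\alpha}{2}E_{\alpha/2}(-t^{\alpha/2})$, with $E_\beta$ the Mittag-Leffler function. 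This yields both ends of $\mu$ and its complete monotonicity at once.

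\textbf{Second, the constants.} There is no normalization freedom: \eqref{eq:1}--\eqref{eq:2} fix the process and hence its L\'evy density. Carried out, your computations do not reproduce the displayed constants. For $\alpha<2$ they give
\[
j(r)\sim\frac{\alpha\,\Gamma(d/2)}{2\pi^{d/2}}\,r^{-d}\quad(r\to0),\qquad j(r)\sim\frac{\alpha\,2^{\alpha-1}\,\Gamma\bigl(\frac{d+\alpha}{2}\bigr)}{\pi^{d/2}\,\Gamma\bigl(1-\frac{\alpha}{2}\bigr)}\,r^{-d-\alpha}\quad(r\to\infty).
\]
For $\alpha=2$ your Bessel formula gives $\pi^{-d/2}\Gamma(d/2)\,r^{-d}$ and $(2\pi)^{-\frac{d-1}{2}}r^{-\frac{d+1}{2}}e^{-r}$. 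These are the right values. As $|\xi|\to\infty$, $\psi^{\alpha}(\xi)\sim\alpha\log|\xi|$ forces the coefficient of $|x|^{-d}$ to be $\alpha$ divided by the area $2\pi^{d/2}/\Gamma(d/2)$ of the unit sphere. The constant at infinity is the familiar one for $(-\Delta)^{\alpha/2}$, consistent with $\psi^{\alpha}(\xi)\sim|\xi|^{\alpha}$ near $0$.

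So the statement as printed contains misprints:
\begin{itemize}
\item $\pi^{-d/2}$ is missing in both small-$r$ lines;
\item the $\alpha<2$ large-$r$ line has $\Gamma(d/2)/2^{\alpha+1}$ instead of $2^{\alpha-1}\Gamma(\frac{d+\alpha}{2})$;
\item the $\alpha=2$ large-$r$ line has $2^{-d/2}$ instead of $2^{-(d-1)/2}$.
\end{itemize}
You should say that you are proving the corrected asymptotics, rather than claim agreement ``up to normalization''. This is harmless for the rest of the paper, which only uses two-sided bounds such as $J(x,y)\le C|x-y|^{-d-\alpha}$ (Lemma \ref{lem:green_potential_bound}).

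Finally, the large-$r$ domination you flag as the main obstacle is easier than you think. Since $t\mu(t)$ is bounded and $\mu(t)\sim c\,t^{-1-\alpha/2}$ at infinity, $\mu(t)\le Ct^{-1-\alpha/2}$ on all of $(0,\infty)$. This gives the $r$-free majorant $C(4\pi u)^{-d/2}u^{-1-\alpha/2}e^{-1/(4u)}$, though your splitting at $u=r^{-1}$ also works.
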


  Define the following quadratic form:
  \begin{align}
    \label{eq:13}
    \begin{dcases}
     \cE^{\alpha}(u,v) = \int_{\bR^{d}}^{ } \widehat{u}(\xi) \bar{\widehat{v}}(\xi)
    \log{(1 + |\xi|^{\alpha})} d\xi, \\
    \cF^{\alpha} = \left\{ u \in L^{2}(\bR^{d}) : \int_{\bR^{d}}^{ } \left| \widehat{u}(x) \right|^{2}
    \log{(1 + |\xi|^{\alpha})} d\xi < \infty\right\}.
    \end{dcases}
  \end{align}
  Then it is easy to see that $(\cE^{\alpha}, \cF^{\alpha})$ is 
  the regular Dirichlet form associated with $\bfG^{\alpha}$,
  having $C_{0}^{\infty}(\mathbb{R}^{d})$ as its core 
  (see \cite[Example 1.4.1]{FOT}).
  Moreover, since $\bfG^{\alpha}$ is a conservative L\'evy process with
  purely jump paths, we find that
  \begin{align}
  \label{eq:14}
    \cE^{\alpha}(u,v) = \frac{1}{2}\iint_{\bR^{d} \times \bR^{d} \setminus d}
    (u(x) - u(y)) (v(x) - v(y))
    J(x,y) dxdy,
  \end{align}
  where $J(x,y)$ is the L\'evy density of $\bfG^{\alpha}$
  (see Theorem \ref{thm:Levy}) and $d = \{(x,x): x \in \bR^{d}\}$. 
  We denote by $\cF_{e}^{\alpha}$ the family of
  $m$-measurable function $u$ on $\bR^{d}$
  such that $|u| < \infty$ a.e. and there exists an $\cE^{\alpha}$-Cauchy
  sequence $\{u_{n}\}$
  of functions in $\cF^{\alpha}$ such that $\lim_{n \to \infty} u_{n} = u$ a.e.
  We refer to $(\cF_{e}^{\alpha}, \cE^{\alpha})$ as the extended Dirichlet space.
  If $\bfG^{\alpha}$ is transient, $(\cF_{e}^{\alpha}, \cE^{\alpha})$
  is a Hilbert space with inner product $\cE^{\alpha}$.

  We omit the standard definitions related to potential theory 
  and Markov processes associated with regular Dirichlet forms 
  (e.g., capacity, quasi-continuity, smooth measures, 
  and additive functionals), 
  and refer the reader to \cite{FOT} for comprehensive details
    
  Throughout this paper, we always identify each element of
  $\mathcal{F}_{e}$ with its quasi-continuous version.
  It is well-known that there exists one-to-one correspondence between
  the space of smooth measures $\cS$ and the one of positive continuous
  additive functionals (up to equivalence). This correspondence
  is referred to ``Revuz correspondence'',
  for any $f \in \cB^{+}$ and $\gamma$-excessive function $h$, 
  \begin{align}
  \label{eq:12}
    \lim_{t \to 0} \frac{1}{t} \bE_{hm}\left[ \int_0^{ t} f(X_{s}) dA_{s}^{\mu} \right]
    = \int_{\bR^{d}}^{ } f(x) h(x) \mu(dx).
  \end{align}
  In this paper, we only consider positive continuous additive functionals in
  the strict sense. 
  
  Next, we define the Kato class and its associated Green-tight measures.
  This Green-tight measure is originally introduced
  in Chen \cite{Chen:2002:Gaugeability}. 
  For a positive smooth measure $\mu$, its potential $G\mu$ is
  defined by
  \begin{align}
  \label{eq:9}
    G \mu(x) := \int_{\mathbb{R}^{d}}^{ } G(x,y) \mu(dy) = \bE_{x}[A_{\infty}^{\mu}],
  \end{align}
  where $A^{\mu}$ is the positive continuous additive functional with
  the Revuz measure $\mu$. 

  \begin{definition}
    \la{def:Kato}
    \begin{enumerate}[(1)]
    \item A smooth measure $\mu \in \cS$ is said to be in Kato class ($\cK$) if
      the associated PCAF $A_{t}^{\mu}$ satisfies
      \begin{align}
      \label{eq:8}
      \lim_{t \to 0} \sup_{x \in \bR^{d}} \bE_{x}[A_{t}^{\mu}] = 0. 
      \end{align}
    \item A smooth measure $\mu \in \cS$ is said to be 
    in the class $\cK_{\infty}$
      if for any $\varepsilon > 0$, there is a Borel set $K = K(\varepsilon)$ of
      finite $\mu$-measure and a constant $\delta = \delta(\varepsilon) > 0$
      such that for all measurable sets $B \subset K$ with $\mu(B) < \delta$,
      \begin{align}
      \label{eq:6}
        \sup_{x \in \mathbb{R}^{d}} G(1_{B \cup K^{c}} \mu)(x) < \varepsilon.
      \end{align}
     \item A smooth measure $\mu$ is said to be in the class $\cS_{\infty}$
       if for any $\varepsilon > 0$ there exists a Borel subset $K = K(\varepsilon)$
       of finite $\mu$-measure and a constant $\delta = \delta(\varepsilon) > 0$
       such that for all measurable sets $B \subset K$ with $\mu(B) < \delta$,
       \begin{align}
       \label{eq:10}
         \sup_{(x,z) \in \bR^{d} \times \bR^{d}} \int_{B \cup K^{c}}^{ }
         \frac{G(x,y) G(y,z)}{G(x,z)} \mu(dy) \le \varepsilon.
       \end{align}
    \end{enumerate}
  \end{definition}

  We summarize known results on relations among 
  $\mathcal{K}, \mathcal{K}_{\infty}, \mathcal{S}_{\infty}$ 
  and a property.

  \begin{lemma}
    \label{lem:relation-K}
    \begin{enumerate}[{\rm (i)}]
    \item {\rm (\cite[Proposition 2.3]{Chen:2002:Gaugeability})}
      $\mathcal{K}_{\infty} \subset \mathcal{K}$.
    \item {\rm (\cite[Corollary 3.1]{ChenSong2002General})}
      $\mathcal{S}_{\infty} \subset \mathcal{K}_{\infty}$. 
      \item {\rm (\cite[Proposition 2.2]{Chen:2002:Gaugeability})} 
      For $\mu \in \mathcal{K}_{\infty}$, $G\mu$ is bounded.
    \end{enumerate}
  \end{lemma}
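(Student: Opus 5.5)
The three statements are quoted from the literature, so the plan is mainly to check that the cited arguments carry over to $\bfG^{\alpha}$. For (iii), fix $\varepsilon=1$ and take $K$, $\delta$ as in Definition \ref{def:Kato}(2). Since $\mu$ is smooth it charges no polar set, and points are polar for the transient process $\bfG^{\alpha}$, so $\mu$ has no atoms. Hence the finite measure $1_K\mu$ is non-atomic, and by the Lyapunov/Sierpi\'nski property we can split $K=B_1\cup\dots\cup B_N$ with $\mu(B_i)<\delta$. Then
\begin{align}
G\mu \le G(1_{B_1\cup K^c}\mu)+\sum_{i=2}^{N}G(1_{B_i}\mu)\le N,
\end{align}
which proves (iii).

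For (i), fix $\varepsilon>0$ and take $K$, $\delta$ for this $\varepsilon$. I would first write
\begin{align}
\bE_x[A_t^{\mu}] \le G(1_{K^c}\mu)(x)+\bE_x[A_t^{1_K\mu}] \le \varepsilon + \bE_x[A_t^{1_K\mu}].
\end{align}
It then remains to show that $\sup_x \bE_x[A_t^{1_K\mu}]\to0$ as $t\to0$. Splitting $K$ as above into pieces $B_1,\dots,B_N$ of small measure is not enough here, because the resulting bound is $N\varepsilon$. Instead I would follow Chen's argument and use the Markov property:
\begin{align}
\bE_x[A^{\nu}_\infty] \ge \bE_x[A^\nu_t]+\bE_x\bigl[G\nu(X_t)\bigr].
\end{align}
Combined with the strong Feller property recalled above, this makes $x\mapsto \bE_x[A^\nu_t]=G\nu(x)-P_tG\nu(x)$ continuous. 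The quantity decreases to $0$ pointwise as $t\downarrow0$ by continuity of the PCAF. To get uniformity in $x$, I would use two things:
\begin{itemize}
\item the tail estimate $\sup_x G(1_{B\cup K^c}\mu)<\varepsilon$, which makes the contribution from far away or from small sets uniformly small;
\item translation invariance of $p_t$ and a Dini-type argument on a compact region.
\end{itemize}
Chen's proof of this step is potential-theoretic and uses only transience and the strong Feller property, both of which $\bfG^{\alpha}$ has.

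For (ii), I would follow Chen--Song. Take $z$ far away and use the 3G-type integrand. Letting $z\to\infty$ along a sequence, the ratio $G(y,z)/G(x,z)\to1$ locally uniformly in $x,y$; this follows from the asymptotics at infinity in Theorem \ref{thm:Green}, since the tail behaves like $|x-z|^{\alpha-d}$. By Fatou's lemma, \eqref{eq:10} then gives \eqref{eq:6} with the same $K$ and $\delta$.

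The main obstacle is the uniformity in $x$ in (i), since no ultracontractive bound on $p_t$ is available. I would rely on the strong Feller property together with the $\mathcal K_\infty$ tail control rather than on kernel estimates.
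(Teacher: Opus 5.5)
Your (iii) is correct, and your (ii) is correct by a different route; the paper itself gives no argument and only cites Chen and Chen--Song. Part (i), however, has a real gap at the uniformity step.

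\textbf{On (iii).} This is Chen's partition argument and it works. One correction: points are polar for $\bfG^{\alpha}$ because $G(x,x)=\infty$ (the blow-up of $g$ at $0$ in Theorem \ref{thm:Green}), equivalently because $\int_{\bR^d}(1+\log(1+|\xi|^{\alpha}))^{-1}d\xi=\infty$. Transience alone does not make points polar.

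\textbf{On (ii).} For fixed $x,y$ you have $g(|y-z|)/g(|x-z|)\to1$ as $|z|\to\infty$, since $g(r)\sim c\,r^{\alpha-d}$. Fatou's lemma then turns \eqref{eq:10} into $\sup_x G(1_{B\cup K^c}\mu)(x)\le\varepsilon$. Pointwise convergence in $y$ is enough; local uniformity is not needed. Compared with the cited corollary, this trades generality for simplicity: it uses the explicit regular behaviour of $g$ at infinity instead of the general transient-process machinery.

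\textbf{The gap in (i).} The strong Feller property makes $P_tG\nu$ continuous, but it says nothing about $G\nu$. Since $P_sG\nu\uparrow G\nu$ as $s\downarrow0$, $G\nu$ is only lower semicontinuous. Hence $x\mapsto\bE_x[A^\nu_t]=G\nu(x)-P_tG\nu(x)$ is only lower semicontinuous too. Dini's theorem for a decreasing family needs upper semicontinuity; $1_{(0,1/n)}$ on $[0,1]$ decreases pointwise to $0$ but not uniformly. So pointwise convergence $\bE_x[A^\nu_t]\downarrow0$ does not become uniform. Continuity of $G(1_K\mu)$ is essentially what (i) asserts, so it has to be derived from Green-tightness, not from strong Feller.

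\textbf{A direct fix.} You can bypass Dini using only (iii), translation invariance and right-continuity of paths.
\begin{enumerate}[(1)]
\item Since $1_K\mu$ is finite and non-atomic, there is $r>0$ with $\sup_x\mu(K\cap B(x,r))<\delta$, where $B(x,r)$ is the ball of radius $r$ centred at $x$. Otherwise there would be centres $x_n$ and radii $r_n\to0$ violating this. By tightness of $1_K\mu$ the $x_n$ stay bounded, and a limit point would be an atom of mass at least $\delta$.
\item Put $\nu_x=1_{K\setminus B(x,r)}\mu$. Then $\bE_x[A^\mu_t]\le G(1_{K^c\cup(K\cap B(x,r))}\mu)(x)+\bE_x[A^{\nu_x}_t]$, and the first term is $<\varepsilon$.
\item Since $\nu_x$ does not charge $B(x,r)$, we have $\bE_x[A^{\nu_x}_{\tau_{B(x,r)}}]=\int G_{B(x,r)}(x,y)\,\nu_x(dy)=0$, where $G_{B(x,r)}$ is the Green function of the process killed on leaving the ball. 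By the strong Markov property,
\[
\bE_x[A^{\nu_x}_t]\le \bP_x(\tau_{B(x,r)}\le t)\,\|G\mu\|_\infty=\bP_0(\tau_{B(0,r)}\le t)\,\|G\mu\|_\infty .
\]
\item The right-hand side tends to $0$ as $t\downarrow0$, uniformly in $x$. Hence $\limsup_{t\to0}\sup_x\bE_x[A^\mu_t]\le\varepsilon$.
\end{enumerate}

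Alternatively, you can keep your Dini route if you first prove that $G(1_K\mu)$ is continuous. Use Green-tightness for the contribution near the diagonal, and continuity of $g$ on $[r,\infty)$ with dominated convergence away from it. You also need to reduce to a bounded $K$ to control $|x|\to\infty$; translation invariance does not handle that, because $\mu$ is not translation invariant.
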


  In fact, we can strengthen Lemma \ref{lem:relation-K} (ii) in the
  case of GSPs. The following lemma plays a key role. 
  
  \begin{lemma}[Global 3G inequality]
    \label{lem:3G}
    Suppose $G(x,y)$ is the global Green function of a geometric stable process on $\mathbb{R}^d$.
    Then there exists a constant $C > 0$ such that for all $x, y, z \in \mathbb{R}^d$,
    \begin{align}
    \label{eq:38}
      \frac{G(x,y)G(y,z)}{G(x,z)} \le C \left( G(x,y) + G(y,z) \right).
    \end{align}
\end{lemma}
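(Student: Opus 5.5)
The plan is to reduce \eqref{eq:38} to a statement about a radial profile. Since $G(x,y)=g(|x-y|)$ for a positive function $g$ on $(0,\infty)$, put $a=|x-y|$, $b=|y-z|$, $c=|x-z|$. By the triangle inequality $c\le a+b\le 2\max(a,b)$. By symmetry we may assume $a\ge b$, so $c\le 2a$.

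The key claim is a \emph{weak doubling/monotonicity} property of $g$: there is $C_0>0$ such that
\begin{align}
g(r)\le C_0\, g(s)\quad\text{whenever } s\le 2r .
\end{align}
Given this, $g(a)\le C_0 g(c)$ because $c\le 2a$. Hence
\begin{align}
\frac{g(a)g(b)}{g(c)}\le C_0\, g(b)\le C_0\bigl(g(a)+g(b)\bigr),
\end{align}
which is \eqref{eq:38}.

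To prove the claim I would first note that $g$ is nonincreasing. The GSP is subordinate Brownian motion: $\psi^\alpha(\xi)=\phi(|\xi|^2)$ with $\phi(\lambda)=\log(1+\lambda^{\alpha/2})$ a Bernstein function. So $G$ is an integral of Gaussian kernels against the potential measure of the subordinator, $G(x,y)=\int_0^\infty (4\pi t)^{-d/2}e^{-|x-y|^2/4t}\,U(dt)$, and this is radially decreasing. It also gives continuity and strict positivity on $(0,\infty)$.

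Monotonicity already settles the case $s\le r$, so only $r<s\le 2r$ remains, i.e. the comparison $g(r)\lesssim g(2r)$. I would split into three ranges and use Theorem \ref{thm:Green}.
\begin{itemize}
\item[] \textbf{Large $r$} ($r\ge R_1$): $g(r)\sim c_\infty r^{\alpha-d}$, so $g(r)/g(2r)\to 2^{d-\alpha}$.
\item[] \textbf{Small $r$} ($r\le r_0$): $g(r)\sim c_0\, r^{-d}\log^{-2}(1/r)$, so $g(r)/g(2r)\to 2^{d}$, since $\log(1/(2r))/\log(1/r)\to1$.
\item[] \textbf{Intermediate $r$} ($r\in[r_0,R_1]$): $g$ is continuous and positive on the compact interval $[r_0,2R_1]$, so it is bounded above and below there and the ratio is bounded.
\end{itemize}
Taking $C_0$ to be the largest of these bounds proves the claim.

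The main obstacle is the small-distance regime. Here the Green function is not a power: $r^{-d}\log^{-2}(1/r)$ is non-integrable-type singular, and one must check that the logarithmic correction is slowly varying, so that doubling is preserved. The asymptotics in Theorem \ref{thm:Green} give this directly. A secondary point is to justify monotonicity and continuity of $g$ rigorously; the subordination representation handles both. Apart from these, the argument is the standard one for $3G$ inequalities on the whole space with a doubling, radially decreasing kernel.
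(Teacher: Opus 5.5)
Your proposal is correct and follows the paper's proof essentially step for step: the radial reduction $G(x,y)=g(|x-y|)$ with $g$ decreasing, a doubling bound $g(r)\le C_0\,g(2r)$ from the small- and large-distance asymptotics of Theorem~\ref{thm:Green} plus continuity on the intermediate range, and then the WLOG $|x-y|\ge|y-z|$ step giving $g(|x-y|)\le C_0\,g(|x-z|)$. Your justification of monotonicity and continuity via the subordination formula is a detail the paper only asserts; one side remark is off, though: $r^{-d}\log^{-2}(1/r)$ is in fact locally integrable in $\mathbb{R}^d$ (the paper uses this in Lemma~\ref{lem:green_potential_bound}), but this plays no role in the argument.
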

\begin{proof}
  Since the process is spatially homogeneous and isotropic and
  subordinate Brownian motion,
  its Green function can be written as $G(x,y) = g(|x-y|)$,
  where $g: (0, \infty) \to (0, \infty)$ is a strictly decreasing function.

  We first show that there exists a constant $C_0 > 0$ such that
  \begin{align}
  \label{eq:double}
    g(r) \le C_0 g(2r) \quad \text{for all $r > 0$}.
  \end{align}
  By Theorem \ref{thm:Green}, we know that 
  \begin{align}
    \label{eq:58}
    g(r) &\asymp
           \begin{dcases}
             r^{-(d-\alpha)} \quad &\text{for } r \ge 1/2, \\
             r^{-d} \left( \log\frac{1}{r} \right)^{-2} \quad &\text{for } r < 1/2.
           \end{dcases}
  \end{align}
  For the long-range behavior ($r \ge 1/2$), we have:
  \begin{align}
  \label{eq:39}
  \frac{g(r)}{g(2r)} \asymp \frac{r^{-(d-\alpha)}}{(2r)^{-(d-\alpha)}} = 2^{d-\alpha} < \infty.
  \end{align}
  For the short-range behavior ($r < 1/2$), we evaluate the ratio as follows:
  \begin{align}
  \label{eq:55}
  \frac{g(r)}{g(2r)} \asymp \frac{r^{-d} \left( \log\frac{1}{r} \right)^{-2}}{(2r)^{-d} \left( \log\frac{1}{2r} \right)^{-2}} 
    = 2^d \left( \frac{\log\frac{1}{2r}}{\log\frac{1}{r}} \right)^2.
  \end{align}
  Noting that $\log(1/(2r)) = \log(1/r) - \log 2$,
  the term inside the parenthesis converges to $1$ as $r \to 0$.
  Thus, the ratio $\frac{g(r)}{g(2r)}$ remains bounded near the origin.
  In the intermediate region, the ratio is trivially bounded because $g$ is positive and continuous.
  Consequently, this shows that $g(r)$ satisfies the doubling condition
  globally with some constant $C_0 > 0$.
  
  By symmetry, we may assume without loss of generality that $|x-y| \ge |y-z|$.
  Using the triangle inequality, we obtain:
  \begin{align}
  \label{eq:56}
      |x-z| \le |x-y| + |y-z| \le 2|x-y|.
  \end{align}
  Since $g$ is a decreasing function and the doubling condition \eqref{eq:double},
  we see
  \begin{align}
  \label{eq:57}
    g(|x-z|) \ge g(2|x-y|) \ge \frac{1}{C_{0}} g(|x-y|).
  \end{align}
  Hence we obtain:
  \begin{align}
  \label{eq:70}
    g(|x-z|) \ge \frac{1}{C_0} g(|x-y|) \implies \frac{g(|x-y|)}{g(|x-z|)} \le C_0.
  \end{align}

  Using this, we can evaluate the conditional kernel:
  \begin{align}
  \label{eq:71}
    \frac{g(|x-y|)g(|y-z|)}{g(|x-z|)} = \left( \frac{g(|x-y|)}{g(|x-z|)} \right) g(|y-z|) \le C_0 \, g(|y-z|).
  \end{align}
  Since the Green function is strictly positive,
  adding $g(|x-y|)$ to the right-hand side strictly preserves the inequality.
  Therefore we show that 
  \begin{align}
  \label{eq:72}
    \frac{G(x,y)G(y,z)}{G(x,z)}
    &= \frac{g(|x-y|) g(|y-z|)}{g(|x-z|)} \\
    &\le C_{0} g(|y-z|) \le C_{0}(g(|x-y|) + g(|y-z|)) \\
    &\le C_{0}(G(x,y) + G(y,z)),
  \end{align}
  which completes the proof.
\end{proof}

\begin{proposition}
  \label{prop:KS}
  The class $\mathcal{S}_{\infty}$ coincides with $\mathcal{K}_{\infty}$ for the GSP. 
\end{proposition}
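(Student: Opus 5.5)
One inclusion is already available. By Lemma \ref{lem:relation-K} (ii), $\mathcal{S}_{\infty} \subset \mathcal{K}_{\infty}$ holds for general processes, so I only need to show $\mathcal{K}_{\infty} \subset \mathcal{S}_{\infty}$. The plan is to compare the two defining conditions directly, using the global 3G inequality of Lemma \ref{lem:3G} in the form
\begin{align}
\frac{G(x,y)G(y,z)}{G(x,z)} \le C\left(G(x,y) + G(y,z)\right).
\end{align}

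Fix $\mu \in \mathcal{K}_{\infty}$ and $\varepsilon > 0$. Apply the definition of $\mathcal{K}_{\infty}$ with $\varepsilon' := \varepsilon/(2C)$, where $C$ is the constant of Lemma \ref{lem:3G}. This gives a Borel set $K$ with $\mu(K) < \infty$ and a $\delta > 0$ such that
\begin{align}
\sup_{x} G(1_{B \cup K^{c}}\mu)(x) < \varepsilon'
\end{align}
for every measurable $B \subset K$ with $\mu(B) < \delta$. I will use the same $K$ and $\delta$ for the $\mathcal{S}_{\infty}$ condition.

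Now take any such $B$ and any $(x,z)$, and integrate the 3G inequality over $B \cup K^{c}$ against $\mu$:
\begin{align}
\int_{B \cup K^{c}} \frac{G(x,y)G(y,z)}{G(x,z)}\,\mu(dy)
\le C\left( G(1_{B\cup K^{c}}\mu)(x) + \int_{B\cup K^{c}} G(y,z)\,\mu(dy)\right).
\end{align}
The Green function is symmetric, since $G(x,y)=g(|x-y|)$, so the second integral equals $G(1_{B\cup K^{c}}\mu)(z)$. Each term on the right is therefore bounded by $\varepsilon'$, and the whole expression is at most $2C\varepsilon' = \varepsilon$. Taking the supremum over $(x,z)$ gives \eqref{eq:10}, so $\mu \in \mathcal{S}_{\infty}$.

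There is no real obstacle: all the analytic difficulty sits in Lemma \ref{lem:3G}, which we are allowed to assume. The only points that need care are minor ones. The case $x = z$ (or $y = x$) makes $G$ infinite; I will treat the integrand with the convention that the inequality holds in $[0,\infty]$, or simply restrict the supremum to $x \ne z$, which is standard. Finally, $G(x,z) > 0$ holds everywhere, so the ratio in \eqref{eq:10} is well defined.
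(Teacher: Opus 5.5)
Your proposal is correct and takes the same approach as the paper. The paper's proof just says ``clear by Lemma \ref{lem:relation-K} and \ref{lem:3G}'', meaning Lemma \ref{lem:relation-K}(ii) for $\mathcal{S}_{\infty} \subset \mathcal{K}_{\infty}$ and the 3G inequality for the reverse inclusion; you have written out the details it leaves implicit, namely the choice $\varepsilon' = \varepsilon/(2C)$ and the use of the symmetry of $G$.
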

\begin{proof}
This is clear by Lemma \ref{lem:relation-K} and \ref{lem:3G}.
\end{proof}

The following theorem, a Poincar\'e-type inequality first established
by Stollmann and Voigt \cite{StollmannVoigt1996}, plays a crucial role in the subsequent discussion.

\begin{theorem}
  \label{thm:SV}
  Let $G_{\beta}$ be the $\beta$-potential operator, that is,
\begin{align}
\label{eq:11}
  G_{\beta} \mu(x)
  &:= \int_{\mathbb{R}^{d}}^{ } G_{\beta}(x,y) \mu(dy)
    := \int_{\mathbb{R}^{d}}^{ } \left( \int_0^{ \infty} e^{-\beta t} p_{t}(x,y) dt\right) \mu(dy).
\end{align}
  If $\mu \in \mathcal{K}$, then for $\beta \ge 0$, 
  \begin{align}
  \label{eq:SV}
    \int_{\mathbb{R}^{d}}^{ } u(x)^{2} \mu(dx) \le \left\| G_{\beta} \mu \right\|_{\infty}
    \mathcal{E}_{\beta}^{\alpha}(u,u),\quad u \in \mathcal{F}^{\alpha}\
    (u \in \mathcal{F}_{e}^{\alpha}\ \text{if $\beta = 0$}),
  \end{align}
  where $\mathcal{E}_{\beta}^{\alpha}(u,u)
  = \mathcal{E}^{\alpha}(u,u) + \beta \int_{\mathbb{R}^{d}}^{ } u(x)^{2} dx$.
\end{theorem}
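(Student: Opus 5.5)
The plan is a duality argument. Fix $\beta \ge 0$ and first take $u \in \mathcal{F}^{\alpha}$ bounded, or even $u \in C_{0}^{\infty}(\mathbb{R}^{d})$, which is a core; we pass to the general case at the end. Since $\mu \in \mathcal{K}$, we have $G_{\beta}\mu$ bounded for $\beta > 0$; for $\beta = 0$ we may assume $\|G\mu\|_{\infty} < \infty$, since otherwise there is nothing to prove. Under this bound, $\mu$ is a measure of finite energy integral. More precisely, for every $v \in \mathcal{F}^{\alpha}$ (respectively $\mathcal{F}_{e}^{\alpha}$) we have
\begin{align}
  \mathcal{E}_{\beta}^{\alpha}(G_{\beta}(f\mu), v) = \int v f \, d\mu
\end{align}
for bounded nonnegative $f$ with $f\mu$ of finite energy. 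This is the standard identification of the $\beta$-potential of a smooth measure with its Revuz potential (\cite{FOT}, Theorem 2.2.5 and Section 5.1). To guarantee finite energy, we approximate $\mu$ by $1_{F_n}\mu$ along a nest $F_n$ and let $n \to \infty$ at the end by monotone convergence.

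Next, apply the identity with $f = u$ (splitting $u = u^{+} - u^{-}$ if needed) and use Cauchy--Schwarz for the inner product $\mathcal{E}_{\beta}^{\alpha}$:
\begin{align}
  \int u^{2}\, d\mu = \mathcal{E}_{\beta}^{\alpha}(G_{\beta}(u\mu), u) \le \mathcal{E}_{\beta}^{\alpha}(G_{\beta}(u\mu),G_{\beta}(u\mu))^{1/2}\, \mathcal{E}_{\beta}^{\alpha}(u,u)^{1/2}.
\end{align}
The energy of the potential is
\begin{align}
  \mathcal{E}_{\beta}^{\alpha}(G_{\beta}(u\mu),G_{\beta}(u\mu)) = \iint u(x) G_{\beta}(x,y) u(y)\, \mu(dx)\mu(dy).
\end{align}
By the Schur test, using symmetry of $G_{\beta}$ and $2|u(x)u(y)| \le u(x)^2 + u(y)^2$, this is at most
\begin{align}
  \int u(x)^{2} G_{\beta}\mu(x)\, \mu(dx) \le \|G_{\beta}\mu\|_{\infty} \int u^{2}\, d\mu.
\end{align}
Combining the two displays and dividing by $(\int u^2 d\mu)^{1/2}$, which is finite for $u$ bounded with compact support once $\mu$ is Radon, gives \eqref{eq:SV}. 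The supremum here must be taken $\mu$-a.e. or q.e.; the strong Feller property ensures that $G_{\beta}\mu$ is a genuine everywhere-defined function, so the bound by its $\sup$ is legitimate.

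To extend to all $u \in \mathcal{F}^{\alpha}$, pick $u_n \in C_0^{\infty}$ with $u_n \to u$ in $\mathcal{E}_1^{\alpha}$. A subsequence converges q.e., so Fatou's lemma carries the inequality over, because $\mu$ charges no polar set. For $\beta = 0$ and $u \in \mathcal{F}_{e}^{\alpha}$, use an approximating $\mathcal{E}^{\alpha}$-Cauchy sequence in $\mathcal{F}^{\alpha}$ converging q.e., available by transience (\cite{FOT}, Theorem 2.3.4), and again apply Fatou. Alternatively, let $\beta \downarrow 0$, using $\|G_{\beta}\mu\|_\infty \le \|G\mu\|_\infty$.

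The main obstacle is the rigorous justification of the potential identity in the first step when $\mu$ is only smooth rather than of finite energy. This includes ensuring that $\int u^2 d\mu < \infty$ a priori so that the division is valid. The truncation $1_{F_n}\mu$, with $F_n$ compact and $G_{\beta}(1_{F_n}\mu) \le G_{\beta}\mu$, handles both issues, and monotone convergence then closes the argument.
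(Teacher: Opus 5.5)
The paper gives no proof of this inequality. It imports it from Stollmann--Voigt \cite{StollmannVoigt1996} and uses it as a black box.

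Your argument is a correct, self-contained version of the standard duality proof behind that result:
\begin{itemize}
\item For truncations $\mu_n=1_{F_n}\mu$ you write $\int u^2\,d\mu_n=\mathcal{E}^{\alpha}_{\beta}(G_\beta(u\mu_n),u)$ and apply Cauchy--Schwarz in $\mathcal{E}^{\alpha}_{\beta}$.
\item You bound the energy $\iint u(x)G_\beta(x,y)u(y)\,\mu_n(dx)\mu_n(dy)$ by $\|G_\beta\mu\|_\infty\int u^2\,d\mu_n$ via the Schur test. In operator language this is $\|T\|^2=\|TT^{*}\|\le\|G_\beta\mu\|_\infty$ for the trace map $T$ from $(\mathcal{F}^{\alpha},\mathcal{E}^{\alpha}_{\beta})$ to $L^2(\mu_n)$.
\item You close with monotone convergence in $n$, Fatou along q.e.\ convergent approximating sequences (legitimate since $\mu$ is smooth), and $\beta\downarrow0$.
\end{itemize}
The citation buys brevity and validity for general regular Dirichlet forms. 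Your version makes explicit exactly which potential-theoretic facts are used.

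Two points need tidying.

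\textbf{Finite energy of the truncations.} The sentence ``$\mu$ is a measure of finite energy integral'' is false for general $\mu\in\mathcal{K}$: Lebesgue measure lies in $\mathcal{K}$ but is not of finite energy. Your truncation makes that sentence unnecessary. However, the step ``finite mass and bounded $G_\beta$-potential imply finite energy'' for $\nu=1_{F_n}\mu$ deserves a line. In this translation-invariant setting you can use $\iint G_\beta(x,y)\,\nu(dx)\nu(dy)=c\int|\widehat{\nu}(\xi)|^2(\beta+\log(1+|\xi|^{\alpha}))^{-1}d\xi$ together with Cauchy--Schwarz in Fourier variables. Alternatively, take the nest constructed in \cite[Section 2.2]{FOT}, for which finite energy holds by construction.

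\textbf{The case $\beta=0$.} Obtain the inequality on $\mathcal{F}^{\alpha}$ by letting $\beta\downarrow0$, using $\|G_\beta\mu\|_\infty\le\|G\mu\|_\infty$, before passing to $\mathcal{F}^{\alpha}_{e}$. Running the duality directly at $\beta=0$ would additionally require the truncated measures to have finite $0$-order energy, which you did not check.
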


It is known that $\mu \in \mathcal{K}$ if and only if
\begin{align}
\label{eq:73}
  \lim_{\beta \to \infty} \left\| G_{\beta} \mu \right\|_{\infty} = 0. 
\end{align}
Therefore, the inequality \eqref{eq:SV} implies that
for any $\varepsilon > 0$ there exists $M(\varepsilon) > 0$ such that
\begin{align}
  \label{eq:SV-1}
  \int_{\mathbb{R}^{d}}^{ } u(x)^{2} \mu(dx) \le
  \varepsilon \mathcal{E}^{\alpha}(u,u)
  + M(\varepsilon) \int_{\mathbb{R}^{d}}^{ } u(x)^{2} dx, \quad u \in \mathcal{F}^{\alpha}. 
\end{align}
It is known in \cite[Proposition 2.2]{Chen:2002:Gaugeability} that
$\mu \in \mathcal{K}_{\infty}$ is Green bounded:
\begin{align}
\label{eq:SV-2}
  \sup_{x \in \mathbb{R}^{d}} G\mu(x)
  = \sup_{x \in \mathbb{R}^{d}} \bE_{x}[A_{\infty}^{\mu}] < \infty. 
\end{align}
Hence the inequality \eqref{eq:SV} yields that for any $\mu \in \mathcal{K}_{\infty}$,
\begin{align}
\label{eq:69}
  \int_{\mathbb{R}^{d}}^{ } u(x)^{2} \mu(dx) \le \left\| G\mu \right\|_{\infty}
  \mathcal{E}^{\alpha}(u,u), \quad u \in \mathcal{F}_{e}^{\alpha}, 
\end{align}
in particular, $\mathcal{F}_{e}^{\alpha}$ is contained in $L^{2}(\mu)$.

In the sequel, we fix a measure $\mu \in \mathcal{K}_{\infty}$ with $\mu \not\equiv 0$.
  We define the spectral function. 
  \begin{definition}
  \label{def:sf}
  For $\theta \in \bR$, the spectral function $C(\theta)$
  associated with the measure $\mu$ is defined by
  \begin{equation}
  \label{eq:15}
    C(\theta) = - \inf \left\{ \cE^{\alpha}(u,u) - \theta \int_{\bR^{d}} u(x)^{2} \mu(dx)
    \;:\; u \in \cF^{\alpha}, \int_{\bR^{d}} u(x)^{2} dx = 1 \right\}.
  \end{equation}
\end{definition}

It is clear that the spectral function $C(\theta)$ has the following properties.
\begin{lemma}
  \label{lem:spectral_properties}
  The spectral function $C(\theta)$ is a non-negative, convex, and continuous function on $\bR$.
\end{lemma}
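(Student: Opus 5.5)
Write $Q_\theta(u) = \cE^{\alpha}(u,u) - \theta \int u^2\, d\mu$ for $u \in \cF^{\alpha}$ with $\|u\|_2 = 1$. Then $C(\theta) = \sup_{\|u\|_2=1}\bigl(\theta \int u^2 d\mu - \cE^{\alpha}(u,u)\bigr)$. The plan is to read off all three properties from this variational representation: $C$ is a supremum of affine functions of $\theta$, and each of these functions is finite because \eqref{eq:69} (or \eqref{eq:SV-1}) gives $\int u^2 d\mu < \infty$ for $u \in \cF^{\alpha}$.

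\emph{Convexity.} Each map $\theta \mapsto \theta \int u^2 d\mu - \cE^{\alpha}(u,u)$ is affine. A pointwise supremum of affine functions is convex, provided it is finite.

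\emph{Finiteness and continuity.} Apply \eqref{eq:SV-1} with $\varepsilon = 1/|\theta|$ (for $\theta > 0$). This gives
\[
\theta\int u^2 d\mu \le \cE^{\alpha}(u,u) + \theta M(1/\theta),
\]
so $C(\theta) \le \theta M(1/\theta) < \infty$. For $\theta \le 0$ we have directly $C(\theta) \le 0$. Hence $C$ is a real-valued convex function on $\bR$. Convex functions that are finite on an open interval are continuous there, so $C$ is continuous on all of $\bR$.

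\emph{Non-negativity.} We need $\inf_{\|u\|_2=1} Q_\theta(u) \le 0$ for every $\theta$. Since $\mu \ge 0$, for $\theta \ge 0$ it suffices to show $\inf_{\|u\|_2=1} \cE^{\alpha}(u,u) = 0$. For $\theta < 0$ this alone is not enough, because $-\theta\int u^2 d\mu$ is positive. We must also make $\int u^2 d\mu$ small along the same sequence, and this is the main step.

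Take $u_n(x) = n^{-d/2}\phi(x/n)$ with $\phi \in C_0^\infty(\bR^d)$ and $\|\phi\|_2 = 1$. Then $\|u_n\|_2 = 1$, and
\[
\cE^{\alpha}(u_n,u_n) = \int |\widehat\phi(\eta)|^2 \log(1+|\eta/n|^\alpha)\,d\eta \to 0
\]
by dominated convergence, using $\log(1+s) \le s$ and the rapid decay of $\widehat\phi$.

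For the $\mu$-term, first choose $K$ and $\delta$ from Definition \ref{def:Kato}(2) so that $\sup_x G(1_{K^c}\mu) < \varepsilon$. Then \eqref{eq:SV} applied to $1_{K^c}\mu$ with $\beta = 0$ gives
\[
\int_{K^c} u_n^2\, d\mu \le \varepsilon\, \cE^{\alpha}(u_n,u_n) \to 0.
\]
On $K$ we have $\mu(K) < \infty$ and $\|u_n\|_\infty \le n^{-d/2}\|\phi\|_\infty$. Hence $\int_K u_n^2 d\mu \le n^{-d}\|\phi\|_\infty^2\, \mu(K) \to 0$.

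Therefore $Q_\theta(u_n) \to 0$ for every $\theta$, so $C(\theta) \ge 0$. As a by-product, $C(\theta) = 0$ for $\theta \le 0$. The only delicate point is this $\mu$-control for $\theta < 0$, and it is handled by the $\cK_\infty$ splitting together with the Stollmann--Voigt inequality, Theorem \ref{thm:SV}.
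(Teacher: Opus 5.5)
Your proof is correct and follows the paper's route: convexity from writing $C$ as a supremum of affine functions, continuity from finiteness via the Kato-class form bound, and non-negativity via the dilated test functions $u_n(x)=n^{-d/2}\phi(x/n)$, where you supply details the paper only asserts. Your $K$/$K^c$ splitting for the $\mu$-term is valid but unnecessary, since \eqref{eq:69} directly gives $\int u_n^2\,d\mu \le \|G\mu\|_\infty\,\cE^{\alpha}(u_n,u_n)\to 0$.
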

\begin{proof}
  By pulling the minus sign inside the infimum, we can rewrite the definition of $C(\theta)$ as a supremum:
  \begin{align}
  \label{eq:spectral_sup}
    C(\theta) = \sup \left\{ \theta \int_{\bR^{d}} u(x)^{2} \mu(dx) - \cE^{\alpha}(u,u) 
    : u \in \cF^{\alpha}, \int_{\bR^{d}} u(x)^{2} dx = 1 \right\}.
  \end{align}
  For each fixed function $u \in \cF^{\alpha}$ with $\|u\|_{2} = 1$, the mapping
  \begin{align*}
    \theta \mapsto \theta \int_{\bR^{d}} u(x)^{2} \mu(dx) - \cE^{\alpha}(u,u)
  \end{align*}
  is an affine function of $\theta$. 
  It is a fundamental fact in convex analysis that the supremum of any family of affine functions is convex. 
  Therefore, $C(\theta)$ is a convex function on $\bR$.
  
  Since $\mu$ is a smooth measure belonging to the appropriate class
  (such as $\mathcal{K}$ or $\mathcal{K}_{\infty}$),
  the operator is bounded from below,
  ensuring that $C(\theta)$ is finite for all $\theta \in \bR$. 
  Because any real-valued convex function defined on an open interval
  (in this case, $\bR$) is automatically continuous,
  the continuity of $C(\theta)$ follows immediately.

  Finally, we show the non-negativity of $C(\theta)$.
  We can choose a sequence of $L^{2}$-normalized test functions
  $\{u_{n}\} \subset \cF^{\alpha}$ that flatten out by spatial scaling
  (e.g., $u_{n}(x) = n^{-d/2}\phi(x/n)$ for some $\phi \in C_{0}^{\infty}(\bR^{d})$
  with $\|\phi\|_{2} = 1$). 
  For such a sequence, the Dirichlet form $\cE^{\alpha}(u_{n}, u_{n})$
  vanishes as $n \to \infty$ because the symbol $\Psi(\xi) = \log(1+|\xi|^{\alpha})$
  vanishes at the origin. 
  Furthermore, the integral $\int_{\bR^{d}} u_{n}(x)^{2} \mu(dx)$
  also vanishes as $n \to \infty$
  because $\mu$ is in the class $\mathcal{K}_{\infty}$. 
  This implies that the supremum in \eqref{eq:spectral_sup}
  is bounded from below by 0. 
  Consequently, we obtain $C(\theta) \ge 0$ for all $\theta \in \bR$.
\end{proof}

We define
\begin{align}
\label{eq:41}
\theta_{c} = \inf\{ \theta > 0 : C(\theta) > 0\}.
\end{align}
This constant $\theta_{c}$ is referred to as the \textit{critical coupling constant}.

  \section{The critical coupling constant and ground state}
  First, we consider the following variational problem:
  \begin{align}\label{eq:18}\
    \lambda(\theta) = \inf \left\{ \cE^{\alpha}(u,u)
    : u \in \cF_{e}^{\alpha},\ \theta \int_{\bR^{d}} u^{2} d\mu = 1 \right\}.
  \end{align}
  It can be shown that a minimizer $h$ for \eqref{eq:18} exists.
  We call the minimizer $h$ the \textit{ground state}.
  Then $h$ is the unique, bounded,
  continuous, and strictly positive function on $\bR^{d}$.
  Following the arguments in \cite{tsuchida2026GSPSF},
  we can construct the transition density of $\bfG^{\alpha}$.

  The following theorem plays a crucial role throughout this paper. 

  \begin{theorem}
  \label{thm:cpt}
  If $\mu \in \mathcal{K}_{\infty}$, there exists the ground state for \eqref{eq:18}. 
  Moreover, the function $h$ is uniquely determined and
  we can take a strictly positive, bounded and continuous version of $h$. 
\end{theorem}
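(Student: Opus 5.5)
The proof proceeds in four steps: existence via a compact embedding, positivity and uniqueness via the Dirichlet-form structure, an eigenvalue equation, and regularity via the strong Feller property.

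\textbf{Step 1: existence by the direct method.} The key input is compactness of the embedding $\mathcal{F}_{e}^{\alpha} \hookrightarrow L^{2}(\mu)$ for $\mu \in \mathcal{K}_{\infty}$. Fix $\varepsilon>0$ and let $K=K(\varepsilon)$, $\delta$ be as in Definition \ref{def:Kato}(2). Take a bounded sequence $\{u_n\}$ in the Hilbert space $(\mathcal{F}_{e}^{\alpha},\mathcal{E}^{\alpha})$ (transience, $d>\alpha$), and pass to a weakly convergent subsequence $u_n \rightharpoonup u$.
\begin{itemize}
\item \emph{Part off $K$.} By \eqref{eq:SV} with $\beta=0$ applied to $1_{K^c}\mu$,
\[
\int_{K^c}(u_n-u)^2\,d\mu \le \varepsilon\, \mathcal{E}^{\alpha}(u_n-u,u_n-u)\le C\varepsilon .
\]
\item \emph{Part on $K$.} Since $\mu(K)<\infty$, I would show $u_n\to u$ in $\mu$-measure on $K$. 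The map $v\mapsto \int v\,\phi\,d\mu$ is a continuous functional on $\mathcal{F}_e^\alpha$ by \eqref{eq:69}. Combined with the uniform absolute continuity $\int_B u_n^2\,d\mu\le \varepsilon C$ for $\mu(B)<\delta$ (again from \eqref{eq:SV} applied to $1_B\mu$), a Vitali/Dunford–Pettis argument yields weak $L^2(\mu|_K)$ convergence together with uniform integrability.
\end{itemize}
Upgrading weak convergence to strong convergence on $K$ is the main obstacle. What I would try first: use that $P_t$ is strong Feller and that $u_n\mapsto G_1$-type smoothings are compact (as in Takeda's Class (T) framework), approximating $u_n$ by $\beta G_\beta u_n$ uniformly in $n$ in $L^2(\mu)$ via \eqref{eq:SV-1}.

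With compactness in hand, take a minimizing sequence for \eqref{eq:18}. It is $\mathcal{E}^{\alpha}$-bounded, so a subsequence converges weakly in $\mathcal{F}_e^\alpha$ and strongly in $L^2(\mu)$. The constraint $\theta\int u^2\,d\mu=1$ passes to the limit, and lower semicontinuity of $\mathcal{E}^{\alpha}$ shows the limit $h$ is a minimizer.

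\textbf{Step 2: positivity, uniqueness and the eigenvalue equation.} Since $\mathcal{E}^{\alpha}(|u|,|u|)\le\mathcal{E}^{\alpha}(u,u)$ by the Markov property of the form, $|h|$ is also a minimizer, so we may take $h\ge0$. The Euler–Lagrange equation reads
\[
\mathcal{E}^{\alpha}(h,v)=\lambda\theta\int hv\,d\mu \quad\text{for all } v\in\mathcal{F}_e^\alpha,
\]
that is, $h=\lambda\theta\, G(h\mu)$ q.e. Irreducibility of the Lévy process ($p_t>0$, equivalently $G>0$ everywhere) then gives strict positivity. Uniqueness up to normalization follows by the standard argument: if two minimizers were linearly independent, a suitable linear combination would change sign and still be a minimizer, so its absolute value would be a minimizer as well; the two would then both satisfy the equation, contradicting strict positivity of minimizers.

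\textbf{Step 3: boundedness.} Iterate the representation $h=\lambda\theta\, G(h\mu)$. First, $G(h\mu)\le \|G\mu\|_\infty^{1/2}$-type bounds obtained via Cauchy–Schwarz with the 3G inequality (Lemma \ref{lem:3G}) and Proposition \ref{prop:KS} give $h\in L^\infty$. Concretely, split $\mu$ into $1_{B\cup K^c}\mu$, with small Green potential, and the finite part on $K$; on the latter, $h$ is controlled by $L^2(\mu)$ norms.

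\textbf{Step 4: continuity.} With $h$ bounded, write
\[
G(h\mu)=\int_0^t P_s(h\mu)\,ds+P_tG(h\mu).
\]
The term $P_tG(h\mu)$ is continuous by the strong Feller property, and the first term is uniformly small as $t\to0$ because $\mu\in\mathcal{K}$ and $h$ is bounded. Hence $G(h\mu)$ is a uniform limit of continuous functions, which gives the continuous version of $h$ and upgrades the q.e. statements to pointwise ones.
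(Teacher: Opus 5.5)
Your Steps 2 and 4 are sound: the Euler--Lagrange equation, strict positivity from $G>0$, uniqueness via the positive part of a sign-changing eigenfunction, and continuity from $G(h\mu)=\int_0^tP_s(h\mu)\,ds+P_tG(h\mu)$ using the strong Feller and Kato properties. Together they would give a more hands-on proof than the paper's. The paper instead time-changes by the full-support Green-tight measure $\nu=\theta_c\mu+g\,dx$, places the time-changed process in Takeda's Class (T), and imports existence, positivity, boundedness and continuity wholesale. However, the two steps that carry the analytic weight are not proved.

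\emph{Step 1.} The compactness of $\mathcal{F}_e^\alpha\hookrightarrow L^2(\mu)$ is left open, and your tentative route does not close it. Weak convergence plus uniform integrability does not by itself give norm convergence. The bound $\|u-\beta G_\beta u\|_{L^2(\mu)}$ is indeed uniformly small by \eqref{eq:SV-1}, but strong Feller only regularizes \emph{bounded} functions, and your $u_n$ are merely energy bounded. Here $G_\beta$ does not map $L^2(dx)$ into $L^\infty$, since $(\beta+\log(1+|\xi|^\alpha))^{-1}\notin L^2(d\xi)$; this is exactly the missing ultracontractivity. So you get no equicontinuity, and compactness in $L^2_{\mathrm{loc}}(dx)$ says nothing for a possibly singular $\mu$. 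You can either quote Takeda's compactness theorem, as the paper does in Section 4, or argue via the kernel. For the embedding $\iota$ one has $\iota\iota^*f=G(f\mu)$, so it suffices that $f\mapsto\int G(\cdot,y)f(y)\,\mu(dy)$ is compact on $L^2(\mu)$. This holds because $(G\wedge N)1_{K\times K}$ is Hilbert--Schmidt (as $\mu(K)<\infty$). The piece $(G-N)^+1_{K\times K}$ and the pieces involving $K^c$ have small norm by Schur's test, since $\mu(\{y\in K:G(x,y)>N\})\le\|G\mu\|_\infty/N<\delta$.

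\emph{Step 3.} This step fails as written. Cauchy--Schwarz gives $G(h\mu)\le(G\mu)^{1/2}G(h^2\mu)^{1/2}$ with no a priori control of $G(h^2\mu)$, and the 3G inequality plays no role. An $L^2(\mu)$ bound on $K$ cannot bound $\int_K G(x,y)h(y)\,\mu(dy)$ uniformly in $x$, because $G(x,\cdot)^2$ need not be $\mu$-integrable (take $\mu=1_{B(1)}dx$). The missing idea is absorption:
\begin{enumerate}
\item Put $c=\lambda\theta$ and choose $\varepsilon$ with $c\varepsilon<1$.
\item Let $B=\{y\in K:h(y)>N\}$, so $\mu(B)\le\|h\|_{L^2(\mu)}^2/N^2<\delta$.
\item Write $h=g+Sh$ with $g\le cN\|G\mu\|_\infty$ and $Sf=cG(f1_{B\cup K^c}\mu)$, so that $\|S\|_{\infty\to\infty}\le c\varepsilon<1$.
\item Iterating gives $h=\sum_{k<n}S^kg+S^nh$. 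The decreasing limit $h_\infty=\lim_nS^nh\le h$ satisfies $h_\infty=Sh_\infty$.
\item Hence, by \eqref{eq:SV}, $\mathcal{E}^\alpha(h_\infty,h_\infty)=c\int h_\infty^2\,1_{B\cup K^c}\,d\mu\le c\varepsilon\,\mathcal{E}^\alpha(h_\infty,h_\infty)$, which forces $h_\infty=0$.
\item Therefore $h\le cN\|G\mu\|_\infty/(1-c\varepsilon)$.
\end{enumerate}
With these two repairs your argument becomes a self-contained alternative that bypasses Class (T). As it stands, it assumes precisely the two facts the paper imports from Takeda.
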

\begin{proof}
In \cite{tsuchida2026GSPSF}, we show that $\bfG^{\alpha}$ has the 
irreducibility and the strong Feller property. 
By \cite[Lemma 2.4]{Takeda2014Variational},
there exists a strictly positive continuous function $g$ such that
$g \cdot dx \in \cK_{\infty}$. Put $\nu = \theta_{c} \mu + g \cdot dx$. 
Since the measure $\nu$ belongs to $\cK_{\infty}$
and has full topological support,  
the time-changed process of $\bfG^{\alpha}$ with $\nu$ belongs to
Class (T) introduced in \cite{Takeda2019}. 
Hence by \cite[Theorem 3.3, Theorem 5.4]{Takeda2019}, 
there exists a minimizer $h_{1}$ in the variational problem \eqref{eq:18}. 
Moreover, we can take $h_{1}$ as a strictly positive,
bounded and continuous function. 
Applying \cite[Lemma 2.5, Theorem 2.1]{Takeda2014Variational} and
putting $h = h_{1}/\sqrt{a}$ where 
$a = \theta\int_{\bR^{d}}^{ } h_{1}^{2} d\mu$,
the function $h$ is desired one. 
\end{proof}

\begin{lemma}[{\cite[Lemma 2.2]{TakedaTsuchida2007}}]
  \label{lem:equiv}
  The following statements are equivalent:
  \begin{enumerate}[{\rm (i)}]
  \item $\ds \inf \left\{ \cE^{\alpha}(u,u) : \int_{\bR^{d}}^{ } u^{2} d\mu = 1 \right\} < 1$,
  \item $\ds \inf \left\{ \cE^{\alpha}(u,u) - \int_{\bR^{d}}^{ } u^{2} d\mu : \int_{\bR^{d}}^{ } u^{2} dx = 1\right\} < 0$.
  \end{enumerate}
\end{lemma}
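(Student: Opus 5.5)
We prove the two implications separately. In both directions the main tool is the fact that $\cF^{\alpha}$ is dense in $\cF_{e}^{\alpha}$ with respect to $\cE^{\alpha}$. The Stollmann--Voigt inequality \eqref{eq:69} makes the map $u \mapsto \int u^{2}\,d\mu$ continuous on $(\cF_{e}^{\alpha},\cE^{\alpha})$. We read the infimum in (i) as taken over $\cF_{e}^{\alpha}$, or equivalently over $\cF^{\alpha}$ by this density; the infimum in (ii) is over $\cF^{\alpha}$.

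\textbf{(i) $\Rightarrow$ (ii).} Suppose (i) holds. Then there is $u \in \cF_{e}^{\alpha}$ with $\int u^{2}\,d\mu = 1$ and $\cE^{\alpha}(u,u) < 1$.
\begin{enumerate}[(1)]
\item Approximate $u$ in $\cE^{\alpha}$ by functions $u_{n} \in \cF^{\alpha}$, for instance $u_{n}\in C_{0}^{\infty}(\bR^{d})$ using the core.
\item By \eqref{eq:69}, $\|u_{n}-u\|_{L^{2}(\mu)}^{2} \le \|G\mu\|_{\infty}\,\cE^{\alpha}(u_{n}-u,u_{n}-u) \to 0$. Hence $\int u_{n}^{2}\,d\mu \to 1$ and $\cE^{\alpha}(u_{n},u_{n}) \to \cE^{\alpha}(u,u)$.
\item Consequently, for large $n$ we get $\cE^{\alpha}(u_{n},u_{n}) - \int u_{n}^{2}\,d\mu < 0$, with $u_{n}\in\cF^{\alpha}\subset L^{2}(dx)$ and $u_{n}\not\equiv 0$.
\item Normalize $v = u_{n}/\|u_{n}\|_{2}$. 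Both terms are homogeneous of degree two, so $\cE^{\alpha}(v,v)-\int v^{2}\,d\mu<0$, which gives (ii).
\end{enumerate}

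\textbf{(ii) $\Rightarrow$ (i).} Suppose (ii) holds. Then there is $u\in\cF^{\alpha}$ with $\|u\|_{2}=1$ and $\cE^{\alpha}(u,u) < \int u^{2}\,d\mu$.
\begin{enumerate}[(1)]
\item The inequality forces $\int u^{2}\,d\mu > 0$, since $\cE^{\alpha}(u,u)\ge 0$.
\item Set $w = u/(\int u^{2}\,d\mu)^{1/2}$. This lies in $\cF^{\alpha}\subset\cF_{e}^{\alpha}$ and satisfies $\int w^{2}\,d\mu = 1$.
\item By homogeneity, $\cE^{\alpha}(w,w) = \cE^{\alpha}(u,u)/\int u^{2}\,d\mu < 1$, which is (i).
\end{enumerate}

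\textbf{Main obstacle.} The only delicate point is the density and continuity step in (i) $\Rightarrow$ (ii). It has two parts. First, elements of $\cF_{e}^{\alpha}$ need not lie in $L^{2}(dx)$, since the process is transient, so they cannot be normalized directly. Second, the $\mu$-integral must be continuous along the approximating sequence. Both are handled by \eqref{eq:69}, which requires $\mu\in\cK_{\infty}$ (Green-boundedness), together with the standing identification of elements of $\cF_{e}^{\alpha}$ with their quasi-continuous versions. Under that identification, $\int u^{2}\,d\mu$ is well defined for a smooth measure $\mu$.
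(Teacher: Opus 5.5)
Your proof is correct. The direction (ii)$\Rightarrow$(i) is a pure rescaling. For (i)$\Rightarrow$(ii), you use the $\cE^{\alpha}$-density of $C_{0}^{\infty}(\bR^{d})$ in $\cF_{e}^{\alpha}$ together with the continuity of $u\mapsto\int u^{2}\,d\mu$ given by \eqref{eq:69}, which lets you replace a non-normalizable competitor in $\cF_{e}^{\alpha}$ by one in $L^{2}(dx)$. The paper gives no proof of its own and only cites \cite[Lemma 2.2]{TakedaTsuchida2007}; your density-plus-homogeneity argument is the standard reasoning behind that citation.
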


Before characterizing the critical coupling constant $\theta_c$,
let us introduce the notion of subcriticality and its analytic test.
For a smooth measure $\nu$, a Schr\"odinger-type operator $\mathcal{H}^{\nu} = -\mathcal{H}^{\alpha} - \nu$
is said to be \textit{subcritical} if it possesses
the minimal positive Green function $G^{\nu}(x, y)$.
The following analytic characterization (subcriticality test)
in terms of the principal eigenvalue of the time-changed process was initially established by
Takeda \cite[Theorem 3.9]{Takeda2002Conditional} under
some regularity conditions, and shortly after,
proven by Chen \cite{Chen:2002:Gaugeability}
for general symmetric Markov processes without regularity conditions.

\begin{theorem}[{\cite[Theorem 5.3]{Chen:2002:Gaugeability}}]
  \label{thm:subcritical_test}
  Let $\mu$ be a positive smooth measure in $\cS_\infty (= \mathcal{K}_{\infty})$. 
  Then the operator $\cH^{\mu}$ is subcritical if and only if
\begin{equation}
    \inf \left\{ \cE^{\alpha}(u, u) : u \in \mathcal{F}_e^\alpha, \int_{\mathbb{R}^d} u^2 d\nu = 1 \right\} > 1.
\end{equation}
\end{theorem}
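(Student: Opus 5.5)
The plan is to go through gaugeability. For $\mu\in\cS_\infty(=\cK_\infty$ by Proposition \ref{prop:KS}$)$, subcriticality of $\cH^{\mu}$ (existence of the minimal positive Green function $G^{\mu}$) should be equivalent to finiteness of the gauge function
\begin{align*}
g(x)=\bE_x\bigl[e^{A^{\mu}_{\infty}}\bigr].
\end{align*}
The formal link is the series $G^{\mu}=\sum_{n\ge0}G(\mu G)^n$. Under the 3G inequality of Lemma \ref{lem:3G}, convergence of this series at one pair $(x,z)$ forces uniform convergence of the conditional gauges $\bE_x^z[e^{A^\mu_\infty}]$. That is Chen's conditional gauge theorem, and I would invoke it rather than reprove it. So it remains to show that gaugeability is equivalent to
\begin{align*}
\lambda:=\inf\Bigl\{\cE^{\alpha}(u,u): u\in\cF_e^\alpha,\ \int u^2\,d\mu=1\Bigr\}>1 .
\end{align*}
(I read the $\nu$ in the display as $\mu$.)

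\emph{Step 1 ($\lambda>1\Rightarrow$ gaugeable).} Consider the operator $K f(x)=\int G(x,y)f(y)\mu(dy)$ on $L^2(\mu)$. By \eqref{eq:69} and the Dirichlet principle $\int f\,Kf\,d\mu=\cE^\alpha(Kf,Kf)$, $K$ is bounded and symmetric on $L^2(\mu)$ with $\|K\|=1/\lambda<1$. Since $\mu\in\cK_\infty$, $K$ is moreover compact (cut $\mu$ into the piece on $K(\varepsilon)$ and a small remainder). I would then pass from $L^2(\mu)$ bounds to sup-norm bounds on $\bE_x[(A^\mu_\infty)^n]/n!=K^n1(x)$. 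To do this, split $\mu=1_{B\cup K^c}\mu+1_{K\setminus B}\mu$ with $\|G(1_{B\cup K^c}\mu)\|_\infty<\varepsilon$, use a Khas'minskii-type bound for the small part, and use the $L^2(\mu)$ spectral gap for the part of finite mass, where $G(x,\cdot)\in L^2(1_{K}\mu)$ uniformly in $x$ by \eqref{eq:SV-2}. This yields geometric decay of $\sup_xK^n1$, hence $\|g\|_\infty<\infty$.

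\emph{Step 2 (gaugeable $\Rightarrow\lambda>1$).} I would use Chen's result that the gauge set $\{\theta: \sup_x\bE_x[e^{\theta A^\mu_\infty}]<\infty\}$ is open, so gaugeability persists for $(1+\varepsilon)\mu$. The corresponding gauge $g_\varepsilon$ is a bounded, strictly positive (by irreducibility and the strong Feller property from \cite{tsuchida2026GSPSF}) $(1+\varepsilon)\mu$-harmonic function. A ground-state transform with $g_\varepsilon$ (or equivalently the Beurling--Deny/Allegretto--Piepenbrink argument) gives
\begin{align*}
\cE^\alpha(u,u)-(1+\varepsilon)\int u^2\,d\mu\ \ge\ 0,\qquad u\in\cF^\alpha .
\end{align*}
Extending to $\cF_e^\alpha$ by approximation and \eqref{eq:69} then gives $\lambda\ge1+\varepsilon>1$.

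The main obstacle is Step 1's upgrade from the $L^2(\mu)$ norm estimate $\|K\|<1$ to a uniform bound on $\bE_x[e^{A_\infty^\mu}]$. Without ultracontractivity one cannot simply smooth $L^2$ into $L^\infty$, and this is exactly where the $\cK_\infty$ splitting and the 3G structure must substitute. If that route gets messy, I would instead cite Chen's Theorem 5.3 directly, after verifying that its hypotheses hold here: $\mu\in\cS_\infty$ via Lemma \ref{lem:3G}, and transience, irreducibility and the strong Feller property of $\bfG^\alpha$.
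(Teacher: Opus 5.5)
\paragraph{Verdict.} Your fallback is exactly what the paper does, but your self-contained sketch has a genuine gap in Step 1. The paper gives no argument for this statement: it imports Chen's Theorem 5.3 (with $\nu$ read as $\mu$, as you did). The only local input is $\cS_\infty=\cK_\infty$, obtained from Lemma \ref{lem:3G} and Proposition \ref{prop:KS}.

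\paragraph{The gap in Step 1.} It sits at the step you yourself flag. You assert that $G(x,\cdot)\in L^2(1_K\mu)$ uniformly in $x$ ``by \eqref{eq:SV-2}''. But \eqref{eq:SV-2} only gives $\sup_x\|G(x,\cdot)\|_{L^1(\mu)}<\infty$, and the $L^2$ claim is false in general for the GSP. By Theorem \ref{thm:Green}, $G(x,y)^2\asymp|x-y|^{-2d}\log^{-4}(1/|x-y|)$ near the diagonal. This is not locally integrable against any measure whose density is bounded below near $x$. For example, take the measure $g\cdot dx\in\cK_\infty$ from the proof of Theorem \ref{thm:cpt}, with $g>0$ continuous. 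The Lebesgue density theorem then gives $\int_K G(x,y)^2\,\mu(dy)=\infty$ for a.e.\ $x\in K$ whenever $\mu(K)>0$.

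Excising an $x$-dependent ball $B(x,r)$ does not fix this. It only trades the divergence for a constant of order $\sup_{|y-x|\ge r}G(x,y)\,\|G\mu\|_\infty$, which blows up as $r\to0$. The $\cK_\infty$ smallness on the near part does not evidently compensate for it, since you need $r$ small to get that smallness.

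Iterating does not help either. Lemma \ref{lem:3G} only bounds the kernel of $K^n$ by a constant times $G(x,z)$, which has the same non-square-integrable singularity. So nothing in your sketch makes any power of $K$ map $L^2(\mu)$ into bounded functions; this is precisely the missing ultracontractivity. Consequently the geometric decay of $\sup_xK^n1$ does not follow from $\|K\|_{L^2(\mu)}=\lambda^{-1}<1$.

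\paragraph{How to avoid the obstacle.} Aim at subcriticality directly rather than at gaugeability.
\begin{itemize}
\item If $\lambda>1$, then $\cE^\alpha(u,u)-\int u^2\,d\mu\ge(1-\lambda^{-1})\cE^\alpha(u,u)$ on $\cF_e^\alpha$.
\item The variational formula for the resolvent of the Feynman--Kac semigroup then gives $\int_0^\infty\langle P_t^{\mu}f,f\rangle\,dt\le(1-\lambda^{-1})^{-1}\langle Gf,f\rangle<\infty$ for every strictly positive $f$ of finite energy.
\item Hence $G^{\mu}(x,y)<\infty$ for a.e.\ $(x,y)$.
\item Chen's conditional gauge theorem upgrades this to $G^{\mu}\le CG$; this is where Lemma \ref{lem:3G} and $\cS_\infty$ genuinely enter.
\item Gaugeability follows, because $\bE_x[e^{A_\infty^\mu}]=1+G^{\mu}\mu(x)$.
\end{itemize}

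\paragraph{A caution on Step 2.} Be careful citing openness of the gauge set. It is usually derived from the very characterization you are proving, so make sure the version you invoke is independent of it. It is also unnecessary. Suppose the gauge $w(x)=\bE_x[e^{A^\mu_\infty}]$ is bounded, and set $v=w-1=G(w\mu)$.
\begin{itemize}
\item Then $v$ is a bounded, strictly positive strict supersolution: $\cE^\alpha(v,\varphi)=\int v\varphi\,d\mu+\int\varphi\,d\mu$ for $\varphi\ge0$, after a routine truncation of $\mu$.
\item The ground-state identity for the jump form then yields $\cE^\alpha(u,u)\ge(1+\|v\|_\infty^{-1})\int u^2\,d\mu$.
\item This is $\lambda>1$ directly.
\end{itemize}
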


Furthermore, assuming that $\cH^{\nu}$ is subcritical,
by the conditional gauge theorem established
for general symmetric Markov processes
(see Chen \cite[Theorem 3.6 and Remark after Theorem 3.10]{Chen:2002:Gaugeability}),
this subcriticality ensures that the perturbed Green function $G^{\nu}(x, y)$ exists
and is comparable to the original Green function $G(x, y)$, that is, 
there exists a constant $C > 1$ such that
\begin{equation}
  \label{eq:comparable}
    C^{-1} G(x, y) \le G^{\nu}(x, y) \le C G(x, y) \quad \text{for } x \neq y.
\end{equation}

\begin{lemma}
  \label{lem:char}
  The critical coupling constant $\theta_{c}$ is characterized
  as the unique positive number such that
  \begin{align}
  \label{eq:22}
    \inf \left\{ \cE^{\alpha}(u,u) : u \in \cF_{e}^{\alpha}, \ \theta_{c} \int_{\bR^{d}} u^{2} d\mu = 1 \right\} = 1.
  \end{align}
\end{lemma}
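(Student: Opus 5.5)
Set
\begin{align*}
  \lambda_{0} := \inf \left\{ \cE^{\alpha}(u,u) : u \in \cF_{e}^{\alpha}, \int_{\bR^{d}} u^{2} d\mu = 1 \right\}.
\end{align*}
Scaling the constraint gives $\lambda(\theta) = \lambda_{0}/\theta$ for $\theta > 0$, with $\lambda(\theta)$ as in \eqref{eq:18}. So the statement amounts to showing that $\lambda_{0} \in (0,\infty)$ and $\theta_{c} = \lambda_{0}$. The number $\theta_c$ is then the unique positive solution of $\lambda(\theta)=1$, since $\theta \mapsto \lambda_0/\theta$ is strictly decreasing.

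The bounds on $\lambda_{0}$ are easy. Positivity follows from \eqref{eq:69}, which gives $1 = \int u^{2} d\mu \le \|G\mu\|_{\infty} \cE^{\alpha}(u,u)$, hence $\lambda_{0} \ge \|G\mu\|_{\infty}^{-1} > 0$; here Lemma \ref{lem:relation-K}(iii) guarantees $\|G\mu\|_\infty<\infty$. Finiteness holds because $\mu \not\equiv 0$ and $C_{0}^{\infty}(\bR^d)$ is a core, so some $u \in \cF^{\alpha}$ has $\int u^{2} d\mu > 0$.

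The main step identifies $\{\theta > 0 : C(\theta) > 0\}$ with $(\lambda_{0}, \infty)$, using Lemma \ref{lem:equiv} applied to the measure $\theta\mu$ (which is still in $\cK_{\infty}$). Condition (ii) for $\theta\mu$ is exactly $C(\theta) > 0$ by Definition \ref{def:sf}. Condition (i) for $\theta\mu$ reads $\lambda(\theta) < 1$, i.e.\ $\theta > \lambda_{0}$, but only if the infimum in (i) is taken over $\cF_{e}^{\alpha}$ rather than $\cF^{\alpha}$. I need to check that these two infima agree. Given $u \in \cF_{e}^{\alpha}$, take an approximating $\cE^{\alpha}$-Cauchy sequence $u_n \in \cF^{\alpha}$. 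By \eqref{eq:69}, $u_n \to u$ in $L^{2}(\mu)$ as well as in $\cE^{\alpha}$, so after normalizing, the infimum over $\cF^{\alpha}$ equals the infimum over $\cF_{e}^{\alpha}$. Therefore $C(\theta) > 0$ if and only if $\theta > \lambda_{0}$, and so $\theta_{c} = \inf(\lambda_{0},\infty) = \lambda_{0}$, which is \eqref{eq:22}.

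The point I expect to need the most care is the one Lemma \ref{lem:equiv} must really supply: at $\theta = \lambda_{0}$ itself one has $\lambda(\theta) = 1$, and Lemma \ref{lem:equiv} then says $C(\lambda_0) = 0$, so no positivity leaks to the endpoint. If that lemma is only available over $\cF^{\alpha}$ with $L^{2}(dx)$ normalization, I would reprove the direction (ii)$\Rightarrow$(i) directly. Take $u$ with $\|u\|_{2} = 1$ and $\cE^{\alpha}(u,u) < \theta\int u^{2} d\mu$, and normalize $v = u/(\theta\int u^2 d\mu)^{1/2}$; this gives $\cE^{\alpha}(v,v) < 1$. For (i)$\Rightarrow$(ii), pick $v \in \cF^{\alpha}$ with $\theta\int v^2d\mu = 1$ and $\cE^{\alpha}(v,v) < 1$. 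Then $\cE^{\alpha}(v,v) - \theta\int v^{2} d\mu < 0$, and dividing by $\|v\|_{2}^{2}$ gives a negative Rayleigh quotient. The attainment of the infimum from Theorem \ref{thm:cpt} is not needed here, though it confirms that $\lambda_{0}$ is a genuine minimum value.
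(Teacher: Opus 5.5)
Your proposal is correct and follows essentially the same route as the paper: define the $\mu$-normalized infimum, bound it below by $\|G\mu\|_{\infty}^{-1}$ via the Stollmann--Voigt inequality, use the scaling $\lambda(\theta)=\lambda_0/\theta$, and invoke Lemma~\ref{lem:equiv} to get $C(\theta)>0 \iff \theta>\lambda_0$, hence $\theta_c=\lambda_0$. Your extra checks fill in points the paper leaves implicit: finiteness of $\lambda_0$, agreement of the infima over $\cF^{\alpha}$ and $\cF_{e}^{\alpha}$ via \eqref{eq:69}, and the direct homogeneity proof of Lemma~\ref{lem:equiv}; your endpoint concern is harmless but unnecessary, since $\inf(\lambda_0,\infty)=\lambda_0$ either way.
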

\begin{proof}
 Let us define a constant $A$ by the infimum of the Dirichlet form subject to the constraint $\int_{\bR^{d}} u^{2} d\mu = 1$:
  \begin{align}
  \label{eq:28}
    A = \inf \left\{ \cE^{\alpha}(u,u) : u \in \cF_{e}^{\alpha}, \ \int_{\bR^{d}} u^{2} d\mu = 1 \right\}.
  \end{align}
  First, we show that $A > 0$. By Theorem \ref{thm:SV}, we have
  \begin{align}
  \label{eq:75}
  \int_{\bR^{d}} u^{2} \, d\mu \le \| G \mu \|_{\infty} \cE^{\alpha}(u,u) \quad \text{for all } u \in \cF_{e}^{\alpha}. 
  \end{align}
  Taking the infimum over all $u \in \cF_{e}^{\alpha}$ with $\int_{\bR^{d}} u^{2} \, d\mu = 1$,
  we obtain $1 \le \| G \mu \|_{\infty} A$.
  Since $\mu \in \mathcal{K}_{\infty}$ implies that the potential $G \mu$ is bounded by \eqref{eq:SV-2},
  it follows that $A \ge \| G \mu \|_{\infty}^{-1} > 0$.

  For any given $\theta > 0$, we observe the following scaling property by taking $v = \sqrt{\theta}u$:
  \begin{align}
  \label{eq:25}
    \inf \left\{ \cE^{\alpha}(u,u) : \theta \int_{\bR^{d}} u^{2} d\mu = 1 \right\} 
    &= \inf \left\{ \cE^{\alpha}(v/\sqrt{\theta}, v/\sqrt{\theta}) : \int_{\bR^{d}} v^{2} d\mu = 1 \right\} \nonumber \\
    &= \frac{1}{\theta} \inf \left\{ \cE^{\alpha}(v,v) : \int_{\bR^{d}} v^{2} d\mu = 1 \right\} = \frac{A}{\theta}.
  \end{align}
  As established in Lemma \ref{lem:equiv}, the condition $C(\theta) > 0$ is equivalent to $\theta > A$. 
  Therefore, we obtain
  \begin{align}
  \label{eq:27}
    \theta_{c} = \inf\{\theta > 0 : C(\theta) > 0\} = \inf \left\{ \theta > 0 : \theta > A \right\} = A.
  \end{align}
  Since we have already shown $A > 0$, it follows that $\theta_{c}$ is strictly positive. 
  Consequently, substituting $\theta_{c} = A$ into \eqref{eq:25}, we have
  \begin{align}
  \label{eq:29}
    \inf \left\{ \cE^{\alpha}(u,u) : \theta_{c} \int_{\bR^{d}} u^{2} d\mu = 1 \right\} = \frac{A}{\theta_{c}} = 1.
  \end{align}
  This establishes the desired characterization and completes the proof.
\end{proof}

By Theorem \ref{thm:subcritical_test}, we immediately know that
$\mathcal{H}^{\theta_{c} \mu}$ is not subcritical.

\begin{lemma}[{\cite[Lemma 4.3]{TakedaTsuchida2007}}]
  \label{lem:Gnu}
  If $\phi \in C_{0}(\mathbb{R}^{d})$ is non-negative, then
  \begin{align}
  \label{eq:16}
    G^{\nu} \phi(\cdot) = \int_{\mathbb{R}^{d}}^{ } G^{\nu}(\cdot,y) \phi(y) dy \in \mathcal{F}_{e}^{\alpha}.
  \end{align}
\end{lemma}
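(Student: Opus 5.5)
Here $\nu$ is a smooth measure in $\mathcal{K}_\infty$ for which $\mathcal{H}^{\nu}$ is subcritical, so $G^{\nu}$ exists and satisfies \eqref{eq:comparable}. The plan is to realize $G^{\nu}\phi$ as an $\mathcal{E}^{\alpha}$-limit of potentials of truncated (bounded) perturbations. Two facts drive the argument. First, $G\phi$ is a $0$-potential of the finite-energy measure $\phi\,dx$, so it lies in $\mathcal{F}_e^\alpha$. Second, $G^{\nu}\phi$ should be the solution in $\mathcal{F}_e^\alpha$ of the weak equation
\begin{align}
\mathcal{E}^{\alpha}(u,v) - \int_{\mathbb{R}^d} u v\, d\nu = \int_{\mathbb{R}^d} \phi v\, dx, \qquad v \in \mathcal{F}_e^\alpha .
\end{align}

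\textbf{Step 1: the unperturbed potential.} Since $\phi \in C_0(\mathbb{R}^d)$ is non-negative and bounded with compact support, and $G$ is locally integrable by Theorem \ref{thm:Green}, we have $\int G\phi\,\phi\,dx < \infty$. So $\phi\,dx$ has finite $0$-order energy. The standard potential theory of transient Dirichlet forms (\cite{FOT}) then gives $G\phi \in \mathcal{F}_e^\alpha$ with $\mathcal{E}^\alpha(G\phi,v) = \int \phi v\,dx$ for all $v \in \mathcal{F}_e^\alpha$.

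\textbf{Step 2: truncation and the energy identity.} Set $\nu_n = \mathbf{1}_{B(n)}\nu$ and let $u_n := G^{\nu_n}\phi$. By monotonicity of the Feynman--Kac functional, $u_n \uparrow G^{\nu}\phi$ pointwise, and $u_n \le G^{\nu}\phi \le C\,G\phi$ by \eqref{eq:comparable}. Each $u_n$ satisfies the resolvent identity $u_n = G\phi + G(u_n\nu_n)$. I would check that $u_n\nu_n$ has finite energy using $u_n \le C\,G\phi \le C'$ together with $\nu \in \mathcal{K}_\infty$; this puts $u_n \in \mathcal{F}_e^\alpha$, and then
\begin{align}
\mathcal{E}^\alpha(u_n,u_n) - \int u_n^2\, d\nu_n = \int \phi\, u_n\, dx .
\end{align}

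\textbf{Step 3: uniform energy bound (main obstacle).} This is the step I expect to be hardest: the quadratic form $\mathcal{E}^\alpha(u,u)-\int u^2 d\nu$ is only nonnegative, so the identity in Step 2 does not by itself bound $\mathcal{E}^\alpha(u_n,u_n)$. I would use subcriticality through Theorem \ref{thm:subcritical_test}. It gives $\lambda := \inf\{\mathcal{E}^\alpha(u,u) : \int u^2 d\nu = 1\} > 1$, hence
\begin{align}
\int u^2\, d\nu \le \lambda^{-1}\mathcal{E}^\alpha(u,u), \qquad u \in \mathcal{F}_e^\alpha .
\end{align}
Combining this with Step 2 yields
\begin{align}
(1-\lambda^{-1})\,\mathcal{E}^\alpha(u_n,u_n) \le \int \phi\, u_n\, dx \le C\int \phi\, G\phi\, dx < \infty ,
\end{align}
which is uniform in $n$. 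The delicate point is justifying Step 2's membership $u_n \in \mathcal{F}_e^\alpha$ and the identity. I would handle this by a further approximation, for instance replacing $\nu_n$ by its restriction to a compact set on which $G\nu_n$ is small, and then arguing by Neumann series $u_n = \sum_k (G\nu_n)^k G\phi$ in $\mathcal{F}_e^\alpha$, which converges because the form bound $\lambda^{-1}<1$ makes the map $w\mapsto G(w\nu_n)$ a contraction in the energy norm.

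\textbf{Step 4: passage to the limit.} The sequence $(u_n)$ is bounded in the Hilbert space $(\mathcal{F}_e^\alpha,\mathcal{E}^\alpha)$, so by Banach--Saks its Cesàro means of a subsequence converge in $\mathcal{E}^\alpha$ and quasi-everywhere. Their limit must equal the monotone pointwise limit $G^{\nu}\phi$. Since $\mathcal{F}_e^\alpha$ is closed under such limits, $G^{\nu}\phi \in \mathcal{F}_e^\alpha$.
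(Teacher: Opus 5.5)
The paper does not prove this lemma. It imports it from Lemma 4.3 of Takeda--Tsuchida (2007), so your proposal is a self-contained substitute rather than a reconstruction of an argument in the text.

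For a positive $\nu\in\mathcal{K}_\infty$ with $\mathcal{H}^\nu$ subcritical, your proof is correct:
\begin{itemize}
\item $\phi\,dx$ has finite $0$-order energy.
\item $u_n\nu_n$ has finite energy, because $u_n\le C\,G\phi$ is bounded, $\|G\nu\|_\infty<\infty$ and $\nu(B(n))<\infty$.
\item The coercivity bound $(1-\lambda^{-1})\mathcal{E}^{\alpha}(u_n,u_n)\le\int\phi\,u_n\,dx$ is right.
\item Banach--Saks finishes the argument.
\end{itemize}
The further approximation at the end of Step 3 is not needed, since Step 2 already gives $u_n\in\mathcal{F}_e^{\alpha}$. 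In fact your contraction estimate $\mathcal{E}^{\alpha}(G(w\nu),G(w\nu))^{1/2}\le\lambda^{-1}\mathcal{E}^{\alpha}(w,w)^{1/2}$ holds for $\nu$ itself. So the Neumann series $\sum_k (G\nu)^k G\phi$ would prove the lemma with no truncation at all.

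A still shorter route uses \eqref{eq:comparable} directly. For $u=G^\nu\phi\le C\,G\phi$ one has $G(u\nu)=u-G\phi\le u$. Hence, by \eqref{eq:69}, $\int G(u\nu)\,u\,d\nu\le C^2\int (G\phi)^2\,d\nu\le C^2\|G\nu\|_\infty\,\mathcal{E}^{\alpha}(G\phi,G\phi)<\infty$. So $u\nu$ has finite energy and $u=G\phi+G(u\nu)\in\mathcal{F}_e^{\alpha}$ at once. What your route buys over this is the quantitative bound $\mathcal{E}^{\alpha}(G^\nu\phi,G^\nu\phi)\le(1-\lambda^{-1})^{-1}\int\phi\,G^\nu\phi\,dx$. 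Subcriticality enters there through coercivity, not only through the comparison with $G$.

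One point you must adjust is the sign of $\nu$. In this paper the lemma is applied (Lemma \ref{lem:inteq-1}) with $\nu=-\theta_c\mu+w\,dx$ and $\mathcal{H}^\nu=-\mathcal{H}+\nu$. In your sign convention the perturbation is therefore the signed measure $\theta_c\mu-w\,dx$. But Theorem \ref{thm:subcritical_test}, your source for $\lambda>1$, is stated only for positive measures.

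The fix is routine:
\begin{itemize}
\item Truncate only the $\mu$-part, so that $u_n\uparrow G^\nu\phi$ and $u_n\le C\,G\phi$ persist.
\item Replace the Step 3 inequality by the signed test \eqref{eq:subcritical_test_nu}. If $\lambda'>1$ is that infimum, then $\theta_c\int u^2\,d\mu\le\lambda'^{-1}\bigl(\mathcal{E}^{\alpha}(u,u)+\int u^2 w\,dx\bigr)$. Hence $\mathcal{E}^{\alpha}(u,u)-\theta_c\int u^2\,d\mu+\int u^2 w\,dx\ge(1-\lambda'^{-1})\mathcal{E}^{\alpha}(u,u)$.
\end{itemize}
The remaining steps go through unchanged; the extra potential $G(u_n w)$ is trivially of finite energy.
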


\begin{lemma}\label{lem:subcritical_H_nu}
  For a non-negative function $w \in C_0(\mathbb{R}^d)$ with $w \not\equiv 0$,
  define a signed measure $\nu = -\theta_c \mu + w \cdot dx$. 
  Then the Schr\"odinger-type operator
  $\mathcal{H}^{\nu} := - \mathcal{H} + \nu$ is subcritical.
\end{lemma}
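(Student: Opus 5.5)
The plan is to apply a signed-measure version of the subcriticality test, Theorem \ref{thm:subcritical_test}. For the signed potential $\nu = -\theta_c\mu + w\,dx$, subcriticality of $-\mathcal{H} + \nu$ amounts to a strict gap for the quadratic form
\begin{align}
Q(u) := \cE^{\alpha}(u,u) + \int_{\bR^{d}} u^{2} w\,dx - \theta_{c}\int_{\bR^{d}} u^{2}\,d\mu .
\end{align}
Concretely, we aim to show there is $\varepsilon>0$ such that
\begin{align}
\theta_{c}\int_{\bR^{d}} u^{2}\,d\mu \le (1-\varepsilon)\left(\cE^{\alpha}(u,u)+\int_{\bR^{d}} u^{2}w\,dx\right), \quad u \in \cF_{e}^{\alpha}.
\end{align}
Equivalently, we work with the Dirichlet form $\cE^{w}(u,u) = \cE^{\alpha}(u,u) + \int u^2 w\,dx$, which is associated with the killed process $X$ killed at rate $w$. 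Since $w\,dx \in \cK_\infty$ (as $w\in C_0$), this killed process is still transient with Green function $G^{w}\le G$. Chen's criterion, Theorem \ref{thm:subcritical_test}, applied to this killed process with the measure $\theta_c\mu\in\cK_\infty=\cS_\infty$ (using Proposition \ref{prop:KS} and the fact that $G^w\le G$ preserves the Green-tight property), then reduces the lemma to showing
\begin{align}
\lambda_w := \inf\left\{\cE^{w}(u,u) : u\in\cF_e^{\alpha},\ \theta_c\int u^2\,d\mu = 1\right\} > 1 .
\end{align}

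By Lemma \ref{lem:char}, we already have $\lambda_w \ge 1$. The task is to rule out equality. Suppose $\lambda_w = 1$. We first show that a minimizer $h_w$ for $\lambda_w$ exists, by repeating the argument of Theorem \ref{thm:cpt}. Take a minimizing sequence $u_n$ with $\theta_c\int u_n^2\,d\mu=1$. It is bounded in $\cE^{w}$, hence in $\cE^{\alpha}$. The compact embedding of $\cF_e^{\alpha}$ into $L^2(\mu)$, valid for $\mu\in\cK_\infty$ via Class (T) as in \cite{Takeda2019}, gives a subsequence converging weakly in $\cF_e^{\alpha}$ and strongly in $L^2(\mu)$. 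The constraint therefore passes to the limit $h_w$, and by lower semicontinuity $\cE^{w}(h_w,h_w)\le 1$. Then
\begin{align}
1 \le \cE^{\alpha}(h_w,h_w) \le \cE^{w}(h_w,h_w) - \int h_w^2 w\,dx \le 1 - \int h_w^2 w\,dx,
\end{align}
which forces $\int h_w^2 w\,dx = 0$ and $\cE^{\alpha}(h_w,h_w)=1$. So $h_w$ is a minimizer of \eqref{eq:18} at $\theta=\theta_c$. By the uniqueness in Theorem \ref{thm:cpt}, $h_w=h$, which is strictly positive and continuous. But $w\ge0$, $w\not\equiv0$ and $w$ is continuous, so $\int h^2 w\,dx>0$, a contradiction. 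Hence $\lambda_w>1$, and $\mathcal{H}^{\nu}$ is subcritical.

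The main obstacle is justifying the application of Chen's test to the killed form. This requires verifying that $\theta_c\mu$ remains in $\cS_\infty$ relative to the killed process, whose Green function $G^{w}$ is not known a priori to satisfy the 3G inequality of Lemma \ref{lem:3G}. My first attempt would be to avoid this: apply Theorem \ref{thm:subcritical_test} to the unperturbed process with the signed measure $\theta_c\mu - w\,dx$, whose positive part lies in $\cK_\infty$. Chen's theorem in \cite{Chen:2002:Gaugeability} is stated for signed measures with Green-tight positive part, and $w\,dx$ is Green-tight because it is bounded with compact support. If that fails, the fallback is to compare $G^{w}$ and $G$ on compacts. Since $G^{w}\le G$ holds trivially, and $G^{w}\ge cG$ follows from the gauge theorem for the Kato-class killing $w\,dx$, this would make the 3G bound transfer.
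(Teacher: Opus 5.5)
Your proof is correct and follows essentially the same route as the paper. You reduce, via Chen's signed-measure subcriticality test, to showing that the constrained infimum of $\cE^{\alpha}(u,u)+\int u^{2}w\,dx$ exceeds $1$, then argue by contradiction using a minimizing sequence, the compact embedding $\cF_{e}^{\alpha}\hookrightarrow L^{2}(\mu)$, uniqueness of the ground state from Theorem \ref{thm:cpt}, and strict positivity of $h$ to force $\int h^{2}w\,dx>0$. The differences are cosmetic: you close via weak lower semicontinuity of $\cE^{w}$ and the squeeze $1\le \cE^{\alpha}(h_w,h_w)\le 1-\int h_w^{2}w\,dx$ where the paper applies Fatou's lemma to $\int u_n^{2}w\,dx\to 0$, and uniqueness only gives $h_w=\pm h$, which is harmless since only $h_w^{2}$ enters.
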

\begin{proof}
  According to the subcriticality test extended to signed measures
  (see, e.g., \cite{Chen:2002:Gaugeability}), $\mathcal{H}^{\nu}$ is subcritical
  if and only if
  \begin{align}
    \label{eq:subcritical_test_nu}
    \inf \left\{ \mathcal{E}^{\alpha}(u, u) + \int_{\mathbb{R}^d} u^2 w dx
      : u \in \mathcal{F}_e^\alpha, \
      \theta_c \int_{\mathbb{R}^d} u^2 d\mu = 1 \right\} > 1.
  \end{align}
  By the characterization of the critical coupling constant $\theta_c$
  in Lemma \ref{lem:char}, we know that
  $\mathcal{E}^{\alpha}(u, u) \ge 1$ for any $u \in \mathcal{F}_e^\alpha$
  satisfying $\theta_c \int_{\mathbb{R}^d} u^2 d\mu = 1$.
  Since $w \ge 0$, the infimum in \eqref{eq:subcritical_test_nu} is at least $1$.

  Suppose that the infimum in \eqref{eq:subcritical_test_nu} is exactly $1$.
  Then there exists a minimizing sequence
  $\{u_n\} \subset \mathcal{F}_e^\alpha$ such that
  $\theta_c \int_{\mathbb{R}^d} u_n^2 d\mu = 1$ and
  \begin{align}
  \label{eq:37}
    \lim_{n \to \infty} \left( \mathcal{E}^{\alpha}(u_n, u_n)
    + \int_{\mathbb{R}^d} u_n^2 w dx \right) = 1.
  \end{align}
  Since $\mathcal{E}^{\alpha}(u_n, u_n) \ge 1$ and
  $\int_{\mathbb{R}^d} u_n^2 w dx \ge 0$, it follows that
  \begin{align}
  \label{eq:44}
    \lim_{n \to \infty} \mathcal{E}^{\alpha}(u_n, u_n) = 1
    \quad \text{and} \quad \lim_{n \to \infty} \int_{\mathbb{R}^d} u_n^2 w dx = 0.
  \end{align}
  The first limit implies that $\{u_n\}$ is a minimizing sequence
  for the critical variational problem in Lemma 3.4. 
  By the compact embedding of $\mathcal{F}_e^\alpha$ into
  $L^2_{loc}(\mathbb{R}^d, dx)$ and into $L^2(\mu)$,
  there exists a subsequence (still denoted by $\{u_n\}$)
  that converges weakly in $\mathcal{F}_e^{\alpha}$, strongly in $L^2(\mu)$,
  and $dx$-almost everywhere to a minimizer $h_0 \in \mathcal{F}_e^{\alpha}$.

  By Theorem 3.1, the minimizer of this critical variational problem 
  \eqref{eq:22} is unique
  up to a sign, and we can choose a strictly positive continuous ground state $h$.
  Thus, we have $h_0 = h$ or $h_0 = -h$.
  Since $u_n(x) \to h_0(x)$ a.e. and $w \ge 0$,
  we can apply Fatou's lemma to the second limit in \eqref{eq:44} to obtain
  \begin{align}
  \label{eq:59}
    \int_{\mathbb{R}^d} h^2 w dx \le \liminf_{n \to \infty} \int_{\mathbb{R}^d} u_n^2 w dx = 0.
  \end{align}
  However, since $h$ is strictly positive everywhere and
  $w$ is a non-negative function with $w \not\equiv 0$,
  we strictly have $\int_{\mathbb{R}^d} h^2 w dx > 0$. This is a contradiction.

  Therefore, the infimum in \eqref{eq:subcritical_test_nu}
  must be strictly greater than $1$,
  which implies that the operator $\mathcal{H}^{\nu}$ is subcritical.
\end{proof}

Now, for some signed measure $\xi$, we set
\begin{align}
  \label{eq:42}
  \cF^{\xi} &= \cF^{\alpha} \cap L^{2}(|\xi|), \\
  \cE^{\xi}(u,v) &= \cE^{\alpha}(u,v) + \int_{\bR^{d}} u(x)v(x) \xi(dx) \quad
                   \text{for any }u,v \in \cF^{\xi}.
\end{align}
For a signed measure $\eta = \eta^+ - \eta^- 
\in \mathcal{K}_{\infty} - \mathcal{K}_{\infty}$, 
we note that $\mathcal{F}^{|\eta|} = \mathcal{F}^{\alpha}$ by 
Theorem \ref{thm:SV} and Lemma \ref{lem:relation-K}. 

\begin{lemma}
  \label{lem:inteq-1}
  For a non-negative function $w \in C_{0}(\bR^{d})$ with $w \not\equiv 0$,
  let $\nu = -\theta_{c} \mu + w \cdot dx$. Then the ground state $h$ satisfies
  \begin{align}
  \label{eq:36}
    h(x) = \int_{\bR^{d}}^{ } G^{\nu}(x,y) h(y) w(y) dy, \quad \text{for all $x \in \mathbb{R}^{d}$}.
  \end{align}
\end{lemma}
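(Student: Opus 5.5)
We show that both sides of \eqref{eq:36} are the minimizer of one strictly convex variational problem attached to the subcritical form $(\cE^{\nu},\cF_e^{\alpha})$, and then upgrade a.e.\ equality to pointwise equality using continuity.

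\emph{Step 1: $h$ is a weak solution of a Poisson-type equation.} Since $h$ minimizes \eqref{eq:22}, the first variation gives the Euler--Lagrange equation
\begin{align*}
  \cE^{\alpha}(h,v) = \theta_c \int_{\bR^d} h v \, d\mu, \quad v \in \cF_e^{\alpha}.
\end{align*}
Here the Lagrange multiplier equals $\cE^{\alpha}(h,h)=1$ by Lemma \ref{lem:char}. Adding $\int hvw\,dx$ to both sides gives
\begin{align*}
  \cE^{\nu}(h,v) = \int_{\bR^d} h(y) w(y) v(y)\, dy, \quad v \in \cF_e^{\alpha},
\end{align*}
where $\cE^{\nu}(u,v)=\cE^{\alpha}(u,v)-\theta_c\int uv\,d\mu+\int uvw\,dx$. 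This form is well defined on $\cF_e^{\alpha}$ by \eqref{eq:69}. Also $hw$ is nonnegative, bounded, continuous and compactly supported, because $h$ is bounded and continuous by Theorem \ref{thm:cpt}.

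\emph{Step 2: the Green potential solves the same equation.} Put $u := G^{\nu}(hw)$. By Lemma \ref{lem:Gnu}, $u \in \cF_e^{\alpha}$. The perturbed Green operator inverts the form, so that
\begin{align*}
  \cE^{\nu}(u,v) = \int_{\bR^d} h w v \, dx, \quad v \in \cF_e^{\alpha}.
\end{align*}
To justify this I would follow \cite{TakedaTsuchida2007}. For $\phi\in C_0^+$, write $G^{\nu}\phi$ via Feynman--Kac as $\bE_x\bigl[\int_0^\infty e^{-A^{\nu}_t}\phi(X_t)\,dt\bigr]$. The resolvent identity $G^{\nu}\phi = G\phi - G(\nu\, G^{\nu}\phi)$ then holds, and pairing with $v$ via the Revuz correspondence gives the identity. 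Gaugeability ensures the integrals converge, through the comparison \eqref{eq:comparable} and $|\nu|\in\cK_\infty$.

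\emph{Step 3: uniqueness.} Set $f = h-u \in \cF_e^{\alpha}$. By Steps 1 and 2, $\cE^{\nu}(f,v)=0$ for all $v$, so in particular $\cE^{\nu}(f,f)=0$. Subcriticality from Lemma \ref{lem:subcritical_H_nu} yields coercivity: the strict inequality in \eqref{eq:subcritical_test_nu} gives some $\delta>0$ with
\begin{align*}
  \cE^{\nu}(f,f) \ge \delta\, \theta_c \int f^2\, d\mu .
\end{align*}
This is obtained by rescaling the constraint, exactly as in \eqref{eq:25}. Hence $\int f^2\,d\mu=0$, and then $\cE^{\alpha}(f,f)+\int f^2w\,dx = \cE^{\nu}(f,f) = 0$. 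In the transient case $\cE^{\alpha}$ is a norm on $\cF_e^{\alpha}$, so $f=0$ q.e.

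\emph{Step 4: equality everywhere.} The function $h$ is continuous by Theorem \ref{thm:cpt}. The function $u$ is continuous because $G^{\nu}(x,y)\le C\,G(x,y)$, $G$ is locally integrable, $hw$ is bounded with compact support, and the strong Feller property gives continuity of the potential. Two continuous functions that agree q.e.\ (in particular a.e.) agree everywhere, which gives \eqref{eq:36} for all $x$.

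The main obstacle is Step 2: identifying $G^{\nu}(hw)$ as the weak solution in $\cF_e^{\alpha}$ for the signed, non-ultracontractive setting. Continuity of $u$ in Step 4 also requires care near the diagonal, since $G$ there decays like $|x-y|^{-d}\log^{-2}(1/|x-y|)$. For that I would first try dominated convergence with the integrable majorant $C\,G(x,\cdot)\,\|hw\|_\infty 1_{\supp w}$.
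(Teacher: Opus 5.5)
Your proof is correct in structure but takes a different route from the paper at two points.

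\textbf{Identification step.} The paper does not test against all of $\cF_e^{\alpha}$. It plugs $f=G^{\nu}\phi$, with $\phi\in C_0^{+}(\bR^d)$, into $\cE^{\nu}(h,f)=\int hfw\,dx$. It then uses $\cE^{\nu}(h,G^{\nu}\phi)=\int h\phi\,dx$ and the symmetry of $G^{\nu}$ to get $\int h\phi\,dx=\int G^{\nu}(hw)\phi\,dx$ for all such $\phi$, hence $h=G^{\nu}(hw)$ a.e. Both arguments rest on the same cited, unproved fact: $G^{\nu}\phi$ is the weak solution of $\cE^{\nu}(G^{\nu}\phi,v)=\int\phi v\,dx$ on $\cF_e^{\alpha}$. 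The paper needs it only for $v=h$; you need it for $\phi=hw$ and every $v$. In exchange, your Step 3 drops kernel symmetry and duality in favour of a Lax--Milgram-type uniqueness argument. It uses the quantitative content of the strict inequality in \eqref{eq:subcritical_test_nu}, which the paper only uses to get existence of $G^{\nu}$. Your rescaling computation giving $\cE^{\nu}(f,f)\ge\delta\,\theta_c\int f^2\,d\mu$ is correct.

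\textbf{Passage to every $x$.} The paper never proves ordinary continuity of the potential. It shows the Feynman--Kac potential is finely continuous, so $U=\{h\neq G^{\nu}(hw)\}$ is finely open and Lebesgue-null. Such a set must be empty: the resolvent of $1_U$ is positive at each point of a nonempty finely open set, but vanishes identically by the absolute continuity of $p_t$. This needs only fine continuity, which is the regularity naturally available without ultracontractivity.

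\textbf{The weak point in your Step 4.} You need genuine continuity of $u=G^{\nu}(hw)$, and dominated convergence in $x$ does not deliver it. It would require pointwise continuity of $x\mapsto G^{\nu}(x,y)$ off the diagonal, which is not available anywhere in this setting. The bound $G^{\nu}\le CG$ controls size, not regularity. (The moving majorant $C\,G(x_n,\cdot)$ is harmless, by continuity of translations in $L^{1}_{\mathrm{loc}}$.)

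Continuity of $u$ is nevertheless true if you argue through the Feynman--Kac semigroup instead of the kernel:
\begin{enumerate}
\item For every $x$ and $s>0$, $u(x)=\bE_x[\int_0^s e^{-A^{\nu}_t}(hw)(X_t)\,dt]+P^{\nu}_s u(x)$.
\item The first term is $O(s)$ uniformly in $x$, since $\sup_x\bE_x[e^{\theta_c A^{\mu}_s}]<\infty$ by Khas'minskii's lemma for the Kato-class measure $\mu$.
\item $u$ is bounded, by $C\,G(hw)$.
\item Hence $P^{\nu}_s u=P_r(P^{\nu}_{s-r}u)+O\bigl(\sup_x\bE_x|e^{-A^{\nu}_r}-1|\bigr)$. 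Here $P_r(P^{\nu}_{s-r}u)$ is continuous because $P_r$ is strong Feller, and the error tends to $0$ uniformly as $r\to0$ by the Kato property.
\item So $u$ is a uniform limit of continuous functions.
\end{enumerate}
With this replacement, or by adopting the paper's fine-continuity argument instead, your proof is complete.
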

\begin{proof}
Note that
\begin{align}
\label{eq:32}
  \eqref{eq:22} \iff \inf \left\{ \cE^{\alpha}(u,u) : u \in \mathcal{F}_{e}^{\alpha},\
  \int_{\bR^{d}}^{ }u^{2} d\mu = 1\right\}
  = \theta_{c}.
\end{align}
By Theorem \ref{thm:cpt},
we can find a strictly positive bounded continuous ground state in \eqref{eq:22}.
Moreover, this ground state is unique. Let $h \in \mathcal{F}_{e}^{\alpha}$ be the ground state, that is,
\begin{align}
\label{eq:30}
  \cE^{\alpha}(h,h) = \theta_{c} \quad \text{\&} \quad \int_{\bR^{d}}^{ } h^{2} d\mu = 1.
\end{align}
By the Euler-Lagrange equation, it holds that for any $f \in \cF_{e}^{\alpha}$, 
\begin{align}
\label{eq:33}
  \cE^{\alpha}(h,f) = \theta_{c} \int_{\bR^{d}}^{ } hf d\mu
  \iff \cE^{-\theta_{c} \mu}(h,f) = 0. 
\end{align}
Then we see that
\begin{align}
\label{eq:43}
  \cE^{\nu}(h,f) = \int_{\bR^{d}}^{ } h(x) f(x) w(x) dx.
\end{align}
Since $\cH^{\nu} = -\cH^{\alpha} + \nu$ is subcritical by Lemma \ref{lem:subcritical_H_nu}, 
there exists the Green function $G^{\nu}(x,y)$.
Since $G^{\nu}\phi(x)$ is in $\mathcal{F}_{e}^{\alpha}$ for any non-negative function $\phi \in C_{0}(E)$
by Lemma \ref{lem:Gnu}, we substitute $G^{\nu} \phi$ as $f$ in \eqref{eq:43},
\begin{align}
\label{eq:45}
  & \cE^{\nu}(h,G^{\nu} \phi) = \int_{\bR^{d}}^{ } h(x) (G^{\nu} \phi(x)) w(x) dx 
  \iff \int_{\bR^{d}}^{ } h(x) \phi(x) dx = \int_{\bR^{d}}^{ } G^{\nu}(h w)(x) \phi(x) dx. 
\end{align}
Hence we have
\begin{align}
\label{eq:46}
  h(x) &= \int_{\bR^{d}} G^{\nu}(x,y) h(y)w(y) dy \nonumber \\
  &= \bE_{x}\left[\int_{0}^{\infty}
  \exp\left(\theta_{c} A_{t}^{\nu} - \int_0^{ t} w(X_{s}) ds \right) h(X_{t}) w(X_{t}) dt\right]
  \quad \text{$m$-a.e. }x.
\end{align}
We denote the right hand side as $v(x)$. 
First, using the same argument in \cite[Lemma 4.6]{TakedaTsuchida2007},
we find that $v$ is finely continuous. 
Since $h$ is continuous and $v$ is finely continuous,
$U := \{x \in \bR^{d} : h(x) \not= v(x)\}$ is finely open. 
Assume that there exists a point $x_{0} \in U$. Since $U$ is finely open,
it holds that for any $\alpha > 0$, $R_{\alpha}1_{U}(x_{0}) > 0$,
where $R_{\alpha}$ is the $\alpha$-resolvent of $\bfG^{\alpha}$. 
On the other hand, noting the absolute continuity of $\bfG^{\alpha}$,
\begin{align}
\label{eq:67}
  P_{t}(x_{0}, U) = \int_{U}^{ } p_{t}(x_{0},y) dy = 0, \quad t > 0,
\end{align}
since $U$ is of Lebesgue measure zero.
Hence it follows that
\begin{align}
\label{eq:68}
  R_{\alpha} 1_{U}(x_{0}) = \int_0^\infty e^{-\alpha t} P_t(x_0, U) dt = 0.
\end{align}
This is a contradiction. Therefore $h(x) = v(x)$ for all $x \in \bR^{d}$.
\end{proof}

\begin{lemma}
  \label{lem:asymp}
  There exists a constant $C > 1$ such that $h$ satisfies
  \begin{align}
  \label{eq:60}
    C^{-1}|x|^{\alpha - d} \le h(x) \le C |x|^{\alpha - d}, \quad |x| \to \infty.
  \end{align}
\end{lemma}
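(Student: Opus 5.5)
We will prove the estimate directly from the integral representation of Lemma \ref{lem:inteq-1}, combined with the comparability \eqref{eq:comparable} and the far-field asymptotics of Theorem \ref{thm:Green}. Fix a non-negative $w \in C_{0}(\bR^{d})$ with $w \not\equiv 0$ and $\supp w \subset B(R_{0})$ for some $R_{0}>0$, and put $\nu = -\theta_{c}\mu + w\cdot dx$. By Lemma \ref{lem:subcritical_H_nu} the operator $\cH^{\nu}$ is subcritical, so \eqref{eq:comparable} gives a constant $C_{1}>1$ with $C_{1}^{-1}G(x,y) \le G^{\nu}(x,y) \le C_{1}G(x,y)$ for $x\neq y$. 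Inserting this into \eqref{eq:36} yields, for every $x \in \bR^{d}$,
\begin{align}
  C_{1}^{-1}\int_{B(R_{0})} G(x,y)h(y)w(y)\,dy \le h(x) \le C_{1}\int_{B(R_{0})} G(x,y)h(y)w(y)\,dy .
\end{align}

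Next we take $|x| \ge 2R_{0}$ (together with $|x|$ large enough that the large-distance branch of Theorem \ref{thm:Green} is effective). Then for $y \in B(R_{0})$ we have $\tfrac12|x| \le |x-y| \le \tfrac32|x|$. Since $G(x,y)=g(|x-y|)$ with $g$ decreasing and $g(r)\asymp r^{\alpha-d}$ for large $r$ by \eqref{eq:5}, we get $G(x,y)\asymp |x|^{\alpha-d}$ uniformly in $y\in B(R_{0})$. Pulling this factor out of both integrals gives
\begin{align}
  C^{-1}|x|^{\alpha-d}\, I \le h(x) \le C|x|^{\alpha-d}\, I, \qquad I := \int_{\bR^{d}} h(y)w(y)\,dy .
\end{align}

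It remains to check that $0<I<\infty$. Finiteness holds because $h$ is bounded (Theorem \ref{thm:cpt}) and $w\in C_{0}$. Positivity holds because $h$ is strictly positive and continuous while $w\ge0$, $w\not\equiv0$. Absorbing $I$ into the constant gives \eqref{eq:60}.

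The only delicate point is the validity of \eqref{eq:comparable} for the signed measure $\nu$. It is quoted from Chen's conditional gauge theorem, which needs $\nu^{-}=\theta_{c}\mu\in\cS_{\infty}$ and $\nu^{+}=w\,dx\in\cS_{\infty}$ together with subcriticality. The first condition follows from $\mu\in\cK_{\infty}$ and Proposition \ref{prop:KS}. For the second, $w\,dx$ is bounded with compact support, so $G(1_{B(R)^{c}}w\,dx)=0$ for $R\ge R_{0}$. The small-set condition in Definition \ref{def:Kato} holds because $\int_{B}G(x,y)\,dy\to0$ uniformly in $x$ as $|B|\to0$, which uses only the local integrability of $G$ near the diagonal from Theorem \ref{thm:Green}. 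Subcriticality is exactly Lemma \ref{lem:subcritical_H_nu}. Once these hypotheses are verified, the remaining argument is the elementary estimate above.
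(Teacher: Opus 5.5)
Your proof is correct. It follows the same skeleton as the paper's: the integral representation \eqref{eq:36}, then the comparability \eqref{eq:comparable}, then the far-field asymptotics of Theorem \ref{thm:Green}. It differs in the middle step.

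The paper first bounds $h(y)$ above and below by constants on $\supp w$, using continuity and strict positivity. It then applies the Harnack inequality of \v{S}iki\'c--Song--Vondra\v{c}ek to the family $\{G(x,\cdot)\}_{x\in B(R)^c}$, which is harmonic in $B(R)$. This replaces $G(x,y)$ by $G(x,0)$ uniformly in $y\in\supp w$, so $h(x)\asymp G(x,0)$ on all of $B(R)^c$, and only then is Theorem \ref{thm:Green} used.

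You bypass the Harnack inequality altogether. For $|x|\ge 2R_0$ and $y\in B(R_0)$ you have $|x-y|\in[\tfrac12|x|,\tfrac32|x|]$, so the large-distance branch of \eqref{eq:5} already gives $G(x,y)\asymp|x|^{\alpha-d}$ uniformly in $y$. You also keep $h$ inside the integral as $I=\int hw\,dy\in(0,\infty)$.

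What your route buys:
\begin{itemize}
\item It is more elementary and self-contained. It uses only the explicit Green asymptotics and does not import a Harnack inequality for this non-local, non-scale-invariant process.
\item You check the hypotheses of the conditional gauge theorem, in particular $w\,dx\in\mathcal{K}_\infty=\mathcal{S}_\infty$ via Proposition \ref{prop:KS}. The paper leaves this implicit when quoting \eqref{eq:comparable}.
\end{itemize}

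What the paper's route buys:
\begin{itemize}
\item It gives the slightly stronger comparison $h\asymp G(\cdot,0)$ on the whole exterior of $B(R)$, not only for large $|x|$.
\item It would transfer to settings where a Harnack inequality is available but the Green function has no explicit asymptotics.
\end{itemize}
For the statement at hand, which only concerns $|x|\to\infty$ and feeds into Proposition \ref{prop:null}, that extra information is not needed.
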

\begin{proof}
  Suppose that $0 \in \supp (w) \subset B(R)$. By Lemma \ref{lem:inteq-1} and
  the continuity of $h$, there exists a constant $C_{1} > 1$ such that 
  \begin{align}
  \label{eq:61}
    C_{1}^{-1} \int_{B(R)}^{ } G^{\nu}(x,y) w(y) dy
    \le h(x) \le C_{1} \int_{B(R)}^{ } G^{\nu}(x,y) w(y) dy.
  \end{align}
On account of the subcriticality of $\cH^{\nu}$,
there exists a constant $C_{2} > 1$ such that 
\begin{align}
\label{eq:48}
C_{2}^{-1}G(x,y) \le G^{\nu}(x,y) \le C_{2} G(x,y).
\end{align}
Thus we know from \eqref{eq:comparable} that for a constant $C_{3} > 1$, 
\begin{align}
\label{eq:62}
  C_{3}^{-1} \int_{B(R)}^{ } G(x,y) w(y) dy \le h(y) \le C_{3} \int_{B(R)}^{ } G(x,y) w(y) dy.
\end{align}
Note that for any fixed $x \in B(R)^{c}$, the function $y \mapsto G(x,y)$ is
harmonic with respect to $\bfG^{\alpha}$ in the domain $B(R)$. 
Applying the Harnack inequality (see \cite[Theorem 6.7]{SikicSongVondracek2006})
to $\{G(x,\cdot)\}_{x\in B(R)^{c}}$, there exists a constant $C_{4} > 1$ such that
for any $x \in B(R)^{c}$ and $y \in \supp (w)$,
\begin{align}
\label{eq:63}
C_{4}^{-1} G(x,y) \le G(x,0) \le C_{4} G(x,y).
\end{align}
Hence there exists a constant $C_{5} > 1$ such that for any $x \in B(R)^{c}$, 
\begin{align}
\label{eq:64}
C_{5}^{-1} G(x,0) \le h(x) \le C_{5} G(x,0).
\end{align}
Using Theorem \ref{thm:Green}, the proof is completed. 
\end{proof}

\begin{lemma}
The function $h$ is $P_{t}^{\theta_{c} \mu}$-excessive. 
\end{lemma}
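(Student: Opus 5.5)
We need to show $P_t^{\theta_c\mu}h \le h$ for all $t>0$ and $P_t^{\theta_c\mu}h \to h$ as $t\to 0$, where $P_t^{\theta_c\mu}f(x)=\bE_x[e^{\theta_c A_t^\mu} f(X_t)]$. The plan is to use the representation of Lemma \ref{lem:inteq-1}. With $\nu=-\theta_c\mu+w\,dx$, write
\begin{align}
h(x)=\bE_x\left[\int_0^\infty e^{\theta_c A_s^\mu-\int_0^s w(X_r)dr}\,(hw)(X_s)\,ds\right],
\end{align}
valid for every $x$. This exhibits $h$ as the potential of the nonnegative function $hw$ for the Feynman--Kac semigroup $Q_t f(x)=\bE_x[e^{\theta_c A_t^\mu-\int_0^t w(X_r)dr}f(X_t)]$. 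Hence $h$ is $Q_t$-excessive. This follows from the Markov property: $Q_t h(x)=\bE_x[\int_t^\infty\cdots ds]\le h(x)$, and $Q_th\uparrow h$ as $t\downarrow 0$ by monotone convergence.

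Next I would show that $h$ is $P^{\theta_c\mu}$-excessive. Here the perturbation by $w$ is the obstacle, since $Q_t\le P_t^{\theta_c\mu}$ gives an inequality in the wrong direction. Instead, use the Feynman--Kac (Duhamel) identity relating the two semigroups. Applying $P_t^{\theta_c\mu}$ to the integral representation and using the multiplicative functional $M_t=e^{\theta_c A^\mu_t}$, I would verify
\begin{align}
h(x)=P_t^{\theta_c\mu}h(x)+\bE_x\left[\int_0^t e^{\theta_c A_s^\mu}\bigl(w h - w h\bigr)(X_s)\,ds\right]\quad\text{formally}.
\end{align}
More concretely, $h$ solves $\cE^{-\theta_c\mu}(h,f)=0$ by \eqref{eq:33}, i.e.\ $h$ is ``$(\cH^\alpha+\theta_c\mu)$-harmonic''. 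So I expect $P_t^{\theta_c\mu}h=h$, or at least $\le h$.

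A cleaner route avoids proving equality. Define the ``$w$-resolvent'' identity: for $u=h$, the process
\begin{align}
Y_t=e^{\theta_c A_t^\mu-\int_0^t w(X_r)dr}h(X_t)+\int_0^t e^{\theta_c A_s^\mu-\int_0^s w}(hw)(X_s)ds
\end{align}
is a $\bP_x$-martingale, by the potential representation and the Markov property. By Itô/product rule with the bounded-variation factor $e^{\int_0^t w}$, it follows that $e^{\theta_c A_t^\mu}h(X_t)$ is a local martingale. Since $h\ge0$, it is a nonnegative local martingale, hence a supermartingale. This gives $P_t^{\theta_c\mu}h\le h$ pointwise.

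For right-continuity at $t=0$, use Fatou's lemma. Since $h$ is continuous and $X$ is right-continuous with $A_t^\mu\to0$, we get $h(x)\le\liminf_{t\to0}P_t^{\theta_c\mu}h(x)$, which combined with the previous inequality gives convergence. The main obstacle is justifying the local martingale/product-rule step rigorously with the continuous additive functional $A^\mu$ of a jump process. If that proves delicate, I would instead take the alternative route of approximating $h$ by the increasing sequence $h_n=\bE_x[\int_0^\infty e^{\theta_cA_s^\mu-\int_0^s w}(hw\wedge n)(X_s)ds]$, and verify the supermartingale inequality $P_t^{\theta_c\mu}h\le h$ via the identity
\begin{align}
P_t^{\theta_c\mu}h=Q_th+\bE_x\left[\int_0^t e^{\theta_c A_s^\mu}w(X_s)\,Q_{t-s}h(X_s)\,ds\right],
\end{align}
obtained by expanding $e^{\int_0^t w}=1+\int_0^t w(X_s)e^{\int_s^t w}ds$. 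This expansion combines with $Q_{t-s}h\le h$ and the representation of $h$ to close the estimate.
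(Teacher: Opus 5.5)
Your main argument (the ``cleaner route'') is correct, and it differs genuinely from the paper's. The paper works directly with $h\in\mathcal{F}_e^{\alpha}$. The Euler--Lagrange equation $\mathcal{E}^{\alpha}(h,f)=\theta_c\int hf\,d\mu$ identifies the zero-energy part of the Fukushima decomposition as $-\theta_c\int_0^t h(X_s)\,dA^\mu_s$. It\^o's formula with the continuous finite-variation factor $e^{\theta_cA^\mu_t}$ then gives $e^{\theta_cA^\mu_t}h(X_t)=h(X_0)+\int_0^t e^{\theta_cA^\mu_s}\,dM^h_s$, a nonnegative local martingale.

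You instead start from the representation in Lemma~\ref{lem:inteq-1}. Set $W_t=\int_0^t w(X_r)\,dr$ and $\xi=\int_0^\infty e^{\theta_cA^\mu_s-W_s}(hw)(X_s)\,ds$; then $\xi\in L^1(\mathbb{P}_x)$ since $\mathbb{E}_x[\xi]=h(x)$. The Markov property gives $Y_t=\mathbb{E}_x[\xi\mid\mathcal{M}_t]$, a closed c\`adl\`ag martingale. Integration by parts against $e^{W_t}$ yields
\[
e^{\theta_cA^\mu_t}h(X_t)=h(x)+\int_0^t e^{W_s}\,dY_s ,
\]
because there is no bracket term and the two $ds$-terms cancel. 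Since $e^{W_s}\le e^{s\|w\|_\infty}$, this is a nonnegative local martingale, hence a supermartingale.

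So the step you flag as the main obstacle is harmless: $A^\mu$ enters only through $Y$, whose martingale property comes from the Markov property, not from stochastic calculus. Your route gives $P_t^{\theta_c\mu}h\le h$ at every $x$, whereas the Fukushima decomposition gives it only for q.e.\ $x$. Your Fatou argument also supplies $P_t^{\theta_c\mu}h\to h$ as $t\to0$, which the paper does not check. The price is that you depend on Lemma~\ref{lem:inteq-1}, hence on the subcriticality of $\mathcal{H}^{\nu}$, the Green function $G^{\nu}$, and the fine-continuity upgrade to all $x$. The paper needs only the variational equation for $h$.

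You should drop the rest of the write-up, though. The ``formal'' identity with $(wh-wh)$ carries no content. Your fallback also does not close as written: inserting $Q_{t-s}h\le h$ into your Duhamel formula gives
\[
P_t^{\theta_c\mu}h\le Q_th+\mathbb{E}_x\Bigl[\int_0^t e^{\theta_cA^\mu_s}(wh)(X_s)\,ds\Bigr],
\]
whereas $h=Q_th+\mathbb{E}_x\bigl[\int_0^t e^{\theta_cA^\mu_s-W_s}(wh)(X_s)\,ds\bigr]$, so the bound exceeds $h$. Your formula actually comes from $e^{W_t}=1+\int_0^t w(X_s)e^{W_s}ds$. The expansion you wrote, with $e^{W_t-W_s}$, gives the other Duhamel formula:
\[
P_t^{\theta_c\mu}h-h=\mathbb{E}_x\Bigl[\int_0^t e^{\theta_cA^\mu_s-W_s}w(X_s)\bigl(P_{t-s}^{\theta_c\mu}h-h\bigr)(X_s)\,ds\Bigr].
\]
That one does close, by a short-time Gronwall argument using $\sup_x\mathbb{E}_x[e^{\theta_cA^\mu_t}]<\infty$ for $\mu\in\mathcal{K}$, followed by the semigroup property.
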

\begin{proof}
  By a version of the Fukushima decomposition \cite[Theorem 5.2.2, Corollary 5.4.1]{FOT}
  for $h \in \mathcal{F}_{e}^{\alpha}$, we know that
\begin{align}
\label{eq:34}
  h(X_{t}) - h(X_{0}) = M_{t}^{h} - \theta_{c} \int_0^{ t} h(X_{s}) dA_{s}^{\mu},
  \quad \bP_{x}\text{-a.s.}\ \text{q.e. }x.
\end{align}
Noting the process $e^{\theta_{c}A_{t}^{\mu}}$ is continuous and of bounded
variation, Ito's formula yields 
\begin{align}
\label{eq:35}
  \exp(\theta_{c} A_{t}^{\mu}) h(X_{t})
  &= h(X_{0}) + \int_{0}^{t} \exp(\theta_{c} A_{s}^{\mu}) d(h(X_{s}))
    + \int_{0}^{t} h(X_{s}) d(\exp(\theta_{c} A_{s}^{\mu})) \\
  &= h(X_{0}) +
    \int_0^{ t} \exp(\theta_{c} A_{s}^{\mu}) dM_{s}^{h}
    - \theta_{c} \int_0^{ t} \exp(\theta_{c} A_{s}^{\mu}) h(X_{s})
    dA_{s}^{\mu} \\
  &\hspace{1cm} + \theta_{c} \int_0^{ t} h(X_{s})
    \exp(\theta_{c}A_{s}^{\mu}) dA_{s}^{\mu} \\
  &= h(X_{0}) +
    \int_0^{ t} \exp(\theta_{c} A_{s}^{\mu}) dM_{s}^{h}. 
\end{align}
Hence by the positivity of $h$, this implies
\begin{align}
\label{eq:23}
  \bE_{x}[\exp(\theta_{c} A_{t}^{\mu})h(X_{t})] \le h(x)
  \iff P_{t}^{\theta_{c} \mu} h(x) \le h(x).
\end{align}
\end{proof}

Since the function $h$ is $P_{t}^{\theta_{c}\mu}$-excessive,
we can consider the $h$-transformed process $\bfM^{h} = (\bP_{x}^{h}, X_{t}^{h})$, 
that is, the transition semigroup $\{P_{t}^{h}\}$ of $\bfM^{h}$ is defined by
\begin{align}
\label{eq:40}
  P_{t}^{h} f(x) &:= \frac{1}{h(x)} P_{t}^{\theta_{c} \mu} (h f)(x) \nonumber \\
                 &= \frac{1}{h(x)} \int_{\bR^{d}} p^{\theta_{c}\mu}(t,x,y) h(y) f(y) dy. 
\end{align}
In the remainder of this paper, since we only consider the $h$-transform
associated with $P_t^{\theta_c \mu}$, we omit $\theta_c \mu$ from the notation 
and simply write $P_t^h$.

It is known that the transition semigroup $P_{t}^{h}$ is $h^{2}m$-symmetric. 
With respect to this measure, $P_{t}^{h}$ has the transition density 
$p^{\theta_{c}\mu}(t,x,y)/(h(x)h(y))$.
Combining Theorem \ref{thm:subcritical_test} and Lemma \ref{lem:char},
$\cH^{\theta_{c} \mu}$ is not subcritical, 
which means that its minimal Green function does not exist, 
and consequently, that of the $h$-transformed process does not exist either.
However, since $P_{t}^{h}$ constitutes an $h^{2}m$-symmetric Markov semigroup, 
the non-existence of the Green function implies that $\bfM^{h}$ is recurrent.

A recurrent Markov process is called null recurrent (resp. positive recurrent) 
if the total mass of its symmetrized measure is infinite (resp. finite). 
Combining this definition with Lemma \ref{lem:asymp}, we obtain the following proposition.

\begin{proposition}
  \label{prop:null}
  If $\alpha < d \le 2 \alpha$, the process $\bfM^{h}$ is null recurrent.
  If $d > 2\alpha$, the process $\bfM^{h}$ is positive recurrent. 
\end{proposition}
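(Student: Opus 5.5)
Since $\bfM^{h}$ has already been shown to be recurrent (the critical operator $\cH^{\theta_c\mu}$ is not subcritical, so $\bfM^{h}$ admits no Green function), the dichotomy between null and positive recurrence reduces to deciding whether the symmetrizing measure $h^{2}m = h(x)^{2}\,dx$ has finite or infinite total mass. The plan is therefore to compute $\int_{\bR^{d}} h(x)^{2}\,dx$ and show that it is infinite exactly when $\alpha < d \le 2\alpha$ and finite when $d > 2\alpha$.

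First split $\bR^{d}$ into a large ball $B(R_0)$ and its complement. On $B(R_0)$, Theorem \ref{thm:cpt} gives a bounded continuous version of $h$, so
\begin{align}
  \int_{B(R_0)} h(x)^{2}\,dx \le \|h\|_{\infty}^{2}\,|B(R_0)| < \infty .
\end{align}
This part never affects finiteness, and everything depends on the tail. Choose $R_0$ large enough that the two-sided bound of Lemma \ref{lem:asymp} holds for $|x|\ge R_0$. Then
\begin{align}
  C^{-2}\int_{|x|\ge R_0} |x|^{2(\alpha-d)}\,dx \le \int_{|x|\ge R_0} h(x)^{2}\,dx \le C^{2}\int_{|x|\ge R_0} |x|^{2(\alpha-d)}\,dx .
\end{align}
In polar coordinates the model integral is a constant multiple of $\int_{R_0}^{\infty} r^{2\alpha-2d+d-1}\,dr = \int_{R_0}^{\infty} r^{2\alpha-d-1}\,dr$. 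This diverges precisely when $2\alpha - d \ge 0$, i.e. $d\le 2\alpha$, which includes the logarithmic borderline $d=2\alpha$. It converges when $d>2\alpha$.

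Combining the two regions, $h^{2}m(\bR^{d}) = \infty$ for $\alpha<d\le 2\alpha$, so $\bfM^{h}$ is null recurrent. For $d>2\alpha$ we get $h^{2}m(\bR^{d})<\infty$, so $\bfM^{h}$ is positive recurrent.

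The only delicate point is the lower bound in Lemma \ref{lem:asymp}, which is needed for divergence in the borderline case $d=2\alpha$. That lemma supplies it via the comparability \eqref{eq:comparable} and the Harnack inequality, so the argument above is just bookkeeping on top of it. I would also check that the recurrence of $\bfM^{h}$ claimed before the statement is genuine recurrence and not merely the absence of a Green function. For an $h^{2}m$-symmetric irreducible process, the transience/recurrence dichotomy (FOT, Lemma 1.6.4) makes these the same, and irreducibility is inherited from $\bfG^{\alpha}$ because $h>0$.
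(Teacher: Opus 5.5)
Your proposal is correct and follows the paper's argument. The paper also takes recurrence of $\bfM^{h}$ from the non-subcriticality of $\cH^{\theta_c\mu}$, then decides null versus positive recurrence by whether $h^{2}m(\bR^{d})$ is finite, using the two-sided decay $h(x)\asymp|x|^{\alpha-d}$ of Lemma \ref{lem:asymp}; you simply write out the steps it leaves implicit (boundedness of $h$ on a ball and the radial integral $\int^{\infty} r^{2\alpha-d-1}\,dr$).
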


If $\bfM^{h}$ is null recurrent (resp. positive recurrent), we say that the operator $\cH^{\theta_{c}\mu}$
is \textit{null critical} (resp. \textit{positive critical}).

\section{Differentiability of spectral functions}
At the begining of this section, we show the Harris recurrence of
$h$-transformed process $\bfM^{h}$ without the strong Feller property. 
\begin{proposition}
  \la{prop:Harris}
  The $h$-transformed process $\bfM^{h} = (\bP_{x}^{h}, X_{t}^{h})$ is Harris recurrent,
  that is, for a non-negative function $f$,
  \begin{align}
  \label{eq:24}
    \int_{0}^{\infty} f(X_{t}^{h}) dt = \infty \quad \bP_{x}^{h}\text{-a.s.}\
    \text{for all $x \in \bR^{d}$}
  \end{align}
  whenever $m(\{x : f(x) > 0\}) > 0$. 
\end{proposition}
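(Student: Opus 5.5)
We prove Harris recurrence in two steps: first a quasi-everywhere version from Dirichlet form theory, then an upgrade to every starting point using absolute continuity of the transition kernel.

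\textbf{Step 1 (q.e. statement).} We first identify the Dirichlet form of $\bfM^{h}$. It is the $h$-transform of $(\cE^{-\theta_c\mu},\cF^{\alpha})$ on $L^2(h^2 m)$, namely $\cE^{h}(f,f)=\cE^{-\theta_c\mu}(hf,hf)$. Since $h$ is strictly positive and continuous and $\cH^{-\theta_c\mu}$-harmonic in the weak sense \eqref{eq:33}, the usual ground state transform gives
\begin{align}
\cE^{h}(f,f)=\frac12\iint (f(x)-f(y))^2 h(x)h(y)J(x,y)\,dx\,dy .
\end{align}
This is a regular Dirichlet form. It is irreducible, because $J>0$ everywhere and $h>0$; irreducibility is inherited from that of $\bfG^{\alpha}$ shown in \cite{tsuchida2026GSPSF}. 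As noted after \eqref{eq:40}, it is recurrent. By \cite[Theorem 4.7.1]{FOT}, an irreducible recurrent Dirichlet form satisfies
\begin{align}
\bP^{h}_x\Big(\int_0^\infty f(X^h_t)\,dt=\infty\Big)=1
\end{align}
for q.e. $x$, whenever $m(\{f>0\})>0$. Thus the exceptional set $N$ is $\cE^h$-polar. Polar sets for $\bfM^{h}$ and for $\bfG^{\alpha}$ coincide, since $h$ is bounded above and below on compacts. In particular $N$ has Lebesgue measure zero.

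\textbf{Step 2 (removing the exceptional set).} We use the Markov property at a fixed time $s>0$. The event $\Lambda=\{\int_0^\infty f(X^h_t)\,dt=\infty\}$ is shift invariant, so $\Lambda=\theta_s^{-1}\Lambda$ up to the finite contribution from $[0,s]$. Hence
\begin{align}
\bP^h_x(\Lambda)=\bE^h_x\big[\bP^h_{X^h_s}(\Lambda)\big]=\frac{1}{h(x)}\int p^{\theta_c\mu}(s,x,y)h(y)\,\bP^h_y(\Lambda)\,dy .
\end{align}
The key input is that the Feynman--Kac kernel $P_s^{\theta_c\mu}(x,\cdot)$ is absolutely continuous with respect to Lebesgue measure for every $x$, not merely for q.e. $x$. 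This holds because
\begin{align}
P_s^{\theta_c\mu}(x,B)\le \bE_x\big[e^{2\theta_c A^\mu_s}\big]^{1/2} P_s(x,B)^{1/2}.
\end{align}
The first factor is finite by Khasminskii's lemma, since $\mu\in\cK$. The second factor vanishes for Lebesgue-null $B$ by \eqref{eq:19}. Since $\bP^h_y(\Lambda)=1$ off the null set $N$, we get
\begin{align}
\bP^h_x(\Lambda)=\frac{P_s^{\theta_c\mu}h(x)}{h(x)} .
\end{align}
It then remains to show $P^{\theta_c\mu}_s h=h$ everywhere, that is, that $\bfM^h$ is conservative. Recurrence gives conservativeness for q.e. $x$. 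For every $x$, we note that $x\mapsto P_s^{\theta_c\mu}h(x)$ is finely continuous (indeed continuous, by the strong Feller property and the Kato condition). Moreover, the set $\{x: P_s^{\theta_c\mu}h(x)<h(x)\}$ is finely open and Lebesgue-null. Exactly the argument at the end of Lemma \ref{lem:inteq-1}, using \eqref{eq:67}--\eqref{eq:68}, then forces this set to be empty. Hence $\bP^h_x(\Lambda)=1$ for all $x$.

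\textbf{Main obstacle.} The main obstacle is Step 2. The paper emphasizes that the argument should not use the strong Feller property of $\bfM^h$ itself. The delicate points are the absolute continuity of $P_s^{\theta_c\mu}(x,\cdot)$ for every $x$, and the conservativeness $P^{\theta_c\mu}_s h=h$ at every point. We handle both through the Cauchy--Schwarz bound above and the fine-topology/Lebesgue-null argument already used in Lemma \ref{lem:inteq-1}.
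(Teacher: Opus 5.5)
Your proof is correct, and its skeleton matches the paper's. Both obtain a quasi-everywhere statement from the irreducible recurrent form $\mathcal{E}^h$, then upgrade it to every starting point using absolute continuity of the kernel at a fixed time $s>0$. The two halves are obtained differently.

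\textbf{The q.e.\ half.} The paper does not quote the Harris property directly. It sets $u(x)=\bE^h_x[e^{-B_\infty}]$, shows that $1-u$ is $P^h_t$-excessive, and uses that excessive functions of an irreducible recurrent form are constant q.e. It then forces the constant to be $0$ via $\bE^h_x[e^{-B_t}]<1$, using positivity of $p^h_t$. You get the same conclusion in one line from \cite[Theorem 4.7.1]{FOT}.

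\textbf{The upgrade.} The paper concludes from $u\le P^h_tu=\int p^h_t(x,y)u(y)\,dy$. It uses without separate justification two facts: that $P^{\theta_c\mu}_t(x,\cdot)$ has a density for every $x$, and that $\bfM^h$ is conservative at every $x$. The phrase ``recurrent, hence conservative'' only gives $P^h_t1=1$ a.e. Yet the step $u(x)\le P^h_tu(x)$ needs that the process started at that particular $x$ is not killed before time $t$. Your Cauchy--Schwarz/Khasminskii bound and your proof that $P^{\theta_c\mu}_sh=h$ everywhere supply exactly these two facts. So your version is the more complete one, at the cost of quoting the packaged q.e.\ result rather than deriving it.

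\textbf{Small points.}
\begin{itemize}
\item $\int_0^s f(X^h_t)\,dt$ need not be finite for unbounded $f$. Either replace $f$ by $f\wedge 1$, or just use $\Lambda\supseteq\theta_s^{-1}\Lambda$, which gives the inequality you need.
\item Since $h$ and $P^{\theta_c\mu}_sh$ are both continuous, the set $\{P^{\theta_c\mu}_sh\neq h\}$ is open and Lebesgue-null, hence empty. No fine-topology argument is needed, and you should treat $\neq$, not only $<$.
\item Your route uses the strong Feller property of the Feynman--Kac semigroup, which the paper says it wants to avoid. If you want to avoid it too, absolute continuity together with $P^{\theta_c\mu}_th=h$ a.e.\ gives $P^{\theta_c\mu}_{t+s}h(x)=P^{\theta_c\mu}_sh(x)$ for every $x$ and all $t,s>0$. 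Moreover $P^{\theta_c\mu}_th(x)\to h(x)$ as $t\downarrow 0$, by boundedness and continuity of $h$ and uniform integrability of $e^{\theta_cA^\mu_t}$ for small $t$. This again yields $P^{\theta_c\mu}_sh=h$ everywhere.
\end{itemize}
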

\begin{proof}
  Let $B_t = \int_0^t f(X_s^h) ds$. 
  Note that $\{B_t\}$ is a positive continuous additive functional. 
  We define the function $u(x)$ by
  $$u(x) = \mathbb{E}_x^h \left[ e^{-B_\infty} \right].$$
  Our goal is to show that $u(x) = 0$ for all $x \in \bR^d$. 
  By the additive property $B_\infty = B_t + B_\infty \circ \theta_t$ and the Markov property,
  we have for any $t > 0$,
  \begin{align}\label{eq:u_markov}
    u(x) = \mathbb{E}_x^h \left[ e^{-B_t} e^{-B_\infty \circ \theta_t} \right]
    = \mathbb{E}_x^h \left[ e^{-B_t} u(X_t^h) \right] \quad \text{for all } x \in \bR^d.
  \end{align}
  Since $f \ge 0$, it is clear that $0 \le e^{-B_t} \le 1$, which trivially implies $0 \le u(x) \le 1$. 
  Equation \eqref{eq:u_markov} then yields
  \begin{align}\label{eq:u_ineq}
    u(x) \le \mathbb{E}_x^h [ u(X_t^h) ] = P_t^h u(x) \quad \text{for all } x \in \bR^d.
  \end{align}
  Recall that the $h$-transformed process is recurrent, hence, conservative;
  that is, its transition semigroup is Markovian and satisfies $P_t^h 1 = 1$. 
  Using this property, we consider the non-negative function $v(x) = 1 - u(x)$.
  It follows from \eqref{eq:u_ineq} that
  \begin{align}\label{eq:v_excessive}
    v(x) = 1 - u(x) \ge 1 - P_t^h u(x) = P_t^h (1 - u)(x) = P_t^h v(x).
  \end{align}
  This means that $v$ is a $P_t^h$-excessive function. 

  Note that the strict positivity of the transition density $p_{t}^{h}(x,y)$ 
  guarantees that the $h$-transformed process is irreducible. 
  Since the associated Dirichlet form $\cE^h$ is both irreducible and recurrent, 
  any $P_t^h$-excessive function must be constant q.e. by \cite[Theorem 4.7.1(iii)]{FOT}.
  Therefore, $v(x)$ is constant q.e., which implies that
  there exists a constant $C \in [0, 1]$ such that
  \begin{align}
  \label{eq:76}
  u(x) = C \quad \text{for q.e. } x.
  \end{align}
  
  Next, we determine the value of $C$.
  Substituting $u(x) = C$ back into \eqref{eq:u_markov} for those q.e. $x$,
  we obtain
  \begin{align}
  \label{eq:77}
    C = \mathbb{E}_x^h \left[ e^{-B_t} C \right] = C \cdot \mathbb{E}_x^h \left[ e^{-B_t} \right]
    \quad \text{for q.e. $x$}
  \end{align}
  By assumption, the set $\{y \in \bR^d : f(y) > 0\}$
  has a strictly positive measure $m$. 
  Since the transition density satisfies $p_t^h(x, y) > 0$ for all $t > 0$
  and $x, y \in \bR^d$, the process starting from
  any $x$ visits this set with a strictly positive probability. 
  Thus, we have $P_x^h(B_t > 0) > 0$, which yields
  \begin{align}
  \label{eq:78}
  \mathbb{E}_x^h \left[ e^{-B_t} \right] < 1.
  \end{align}
  Returning to the equation $C = C \cdot \mathbb{E}_x^h \left[ e^{-B_t} \right]$,
  since the expectation is strictly less than $1$,
  the only possible algebraic solution is $C = 0$. 
  Therefore, we obtain
  \begin{align}
  \label{eq:79}
  u(y) = 0 \quad \text{for q.e. } y.
  \end{align}
  Since $u(y) = 0$ a.e. $y$, it follows from \eqref{eq:u_ineq} that 
  \begin{align}
  \label{eq:80}
  u(x) \le P_t^h u(x) = \int_{\bR^d} p_t^h(x, y) u(y) dy = 0 \quad \text{for all $x$}.
  \end{align}
  Since $u \ge 0$ by definition, we conclude that
  \begin{align}
  \label{eq:81}
  u(x) = 0 \quad \text{for all } x \in \bR^d,
  \end{align}
  which completes the proof.
\end{proof}

\begin{lemma}
  \label{lem:L2loc}
  Under the transience condition $d > \alpha$,
  any function $u \in \cF_{e}^{\alpha}$ belongs to $L^{2}_{\text{loc}}(dx)$.
\end{lemma}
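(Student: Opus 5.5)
The plan is to use the Stollmann--Voigt inequality, Theorem \ref{thm:SV}, with $\beta = 0$. The test measure will be $\nu_K := 1_K \cdot dx$ for an arbitrary compact set $K \subset \bR^d$. Once we show $\nu_K \in \cK_\infty$ with $\|G\nu_K\|_\infty < \infty$, inequality \eqref{eq:69} gives
\begin{align}
  \int_K u(x)^2\, dx \le \|G 1_K\|_\infty\, \cE^{\alpha}(u,u), \qquad u \in \cF_{e}^{\alpha},
\end{align}
which is exactly $u \in L^2_{\loc}(dx)$. Indeed, $K$ is arbitrary, and the right-hand side is finite for every element of the extended Dirichlet space.

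The main step is to verify that $1_K \cdot dx$ is a Green-tight (class $\cK_\infty$) measure. By Theorem \ref{thm:Green}, $G(x,y) = g(|x-y|)$ with
\[
  g(r) \asymp r^{-d}(\log(1/r))^{-2} \quad \text{near } 0, \qquad g(r) \asymp r^{\alpha-d} \quad \text{at infinity}.
\]
Passing to polar coordinates, the small-scale part of the integral is bounded by
\[
  \int_0^{\delta} r^{-d}(\log (1/r))^{-2} r^{d-1}\,dr = \int_0^\delta \frac{dr}{r\log^2(1/r)} = \frac{1}{\log(1/\delta)} \longrightarrow 0 \quad (\delta \to 0).
\]
Hence $g(|\cdot|)$ is locally integrable, and its integral over small balls is uniformly small. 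Since $g$ is bounded away from $0$, $\sup_x \int_K g(|x-y|)\,dy < \infty$. For the $\cK_\infty$ property, take $K(\varepsilon) = K$ itself, so that $K^c$ carries no mass of $\nu_K$. For $B \subset K$ with $|B| < \delta$, split $\int_B g(|x-y|)\,dy$ into the part with $|x-y| < \eta$ and the part with $|x-y| \ge \eta$:
\begin{itemize}
  \item the first part is at most $C/\log(1/\eta)$;
  \item the second part is at most $g(\eta)\,\delta$.
\end{itemize}
Choosing $\eta$ first and then $\delta$ makes both parts small uniformly in $x$. Alternatively, one can skip the $\cK_\infty$ verification entirely and observe that Theorem \ref{thm:SV} with $\beta = 0$ only requires $\nu_K \in \cK$ together with $\|G\nu_K\|_\infty < \infty$. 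Kato class membership follows from the same uniform small-ball estimate, because $\bE_x[A_t] \le \int_{B(x,\eta)} g + t\,|K|$.

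The expected obstacle is exactly this local integrability of the Green function. Because $g(r) \sim c\,r^{-d}\log^{-2}(1/r)$ is only borderline integrable in $\bR^d$, the argument depends on the logarithmic factor with exponent $2 > 1$. We need to check that the asymptotics of Theorem \ref{thm:Green} give two-sided bounds on a neighborhood of $0$, which they do with constants. The transience assumption $d > \alpha$ is used to guarantee that $G$ is finite, that $(\cF_e^\alpha, \cE^\alpha)$ is a Hilbert space, and that the $\beta=0$ case of \eqref{eq:SV} applies. Finally, $u$ is identified with its quasi-continuous version, which does not affect its $L^2(K,dx)$ norm.
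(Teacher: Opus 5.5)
Your proof is correct, but it follows a different route from the paper. The paper argues on the Fourier side with an approximating sequence $u_n\in\cF^{\alpha}$. On $\{|\xi|>1\}$ one has $\Psi\ge\log 2$, so the high-frequency parts are controlled in $L^2(dx)$ by $\cE^{\alpha}$. On $\{|\xi|\le 1\}$, the Cauchy--Schwarz inequality with $\Psi(\xi)\asymp|\xi|^{\alpha}$ and $\int_{\{|\xi|\le1\}}|\xi|^{-\alpha}\,d\xi<\infty$ controls the low-frequency parts in $L^\infty$. The sum is then Cauchy in $L^2_{\loc}$, and its limit is identified with $u$ a.e.

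You instead apply the $\beta=0$ Stollmann--Voigt inequality \eqref{eq:69} to $1_K\,dx$, so the work moves to the Green function. Integrability of the $r^{-d}\log^{-2}(1/r)$ singularity from Theorem \ref{thm:Green}, together with the monotonicity of $g$, gives $\|G1_K\|_\infty<\infty$ and Green-tightness.

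The two routes trade off differently. The paper's argument uses only the symbol and Plancherel. Yours depends on the $\cF_e$-version of Theorem \ref{thm:SV} and on the \v{S}iki\'c--Song--Vondra\v{c}ek asymptotics. In exchange, it produces in one line the uniform estimate $\int_K u^2\,dx\le\|G1_K\|_\infty\,\cE^{\alpha}(u,u)$. That is what Lemma \ref{lem:compact_embedding} actually needs when it asserts that $\{u_n\psi\}$ is bounded in $L^2(dx)$; the Fourier proof also gives such a bound, but only if the constants are tracked. Transience enters both proofs in the same guise: $\int_{\{|\xi|\le1\}}\Psi(\xi)^{-1}\,d\xi<\infty$ is exactly the transience criterion, i.e.\ the finiteness of $G$.

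A minor point in your aside on the Kato property of $1_K\,dx$. It is immediate from $A_t=\int_0^t 1_K(X_s)\,ds\le t$. The bound $\bE_x[A_t]\le\int_{B(x,\eta)}g+t|K|$ you wrote would require $p_s(x,y)\le 1$ for $|x-y|\ge\eta$, which you have not justified. Since $\cK_\infty\subset\cK$ by Lemma \ref{lem:relation-K}, this does not affect your main argument.
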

\begin{proof}
  Let $u \in \mathcal{F}_e^\alpha$.
  By the definition of the extended Dirichlet space,
  there exists an approximating sequence $\{u_n\} \subset \mathcal{F}^\alpha$
  such that $\lim_{n \to \infty} u_n = u$ $m$-a.e. and
  $\mathcal{E}^\alpha(u_n - u_m, u_n - u_m) \to 0$ as $n, m \to \infty$. 
  Since $u_n \in L^2(\mathbb{R}^d)$, its Fourier transform $\hat{u}_n(\xi)$
  is well-defined, and by the Plancherel theorem,
  the energy form can be represented as
  \begin{align}
    \mathcal{E}^\alpha(u_n, u_n) \asymp
    \int_{\mathbb{R}^d} |\hat{u}_n(\xi)|^2 \Psi(\xi) d\xi,
  \end{align}
  where $\Psi(\xi) = \log(1 + |\xi|^\alpha)$ is the characteristic exponent of the GSP.
  We decompose $u_n$ into two parts, $u_n = v_n + w_n$,
  defined via the inverse Fourier transform by splitting the domain
  into $\{|\xi| \le 1\}$ and $\{|\xi| > 1\}$:
  \begin{align}
    \hat{v}_n(\xi) = \hat{u}_n(\xi) 1_{\{|\xi| \le 1\}} \quad
    \text{and} \quad \hat{w}_n(\xi) = \hat{u}_n(\xi) 1_{\{|\xi| > 1\}}.
  \end{align}

  For the part $w_n$,
  since $\Psi(\xi)$ is bounded away from zero on
  $\{|\xi| > 1\}$ (i.e., $\Psi(\xi) \ge \log 2$), we have
  \begin{align}
    \int_{\mathbb{R}^d} |\hat{w}_n(\xi) - \hat{w}_m(\xi)|^2 d\xi
    &\le \frac{1}{\log 2} \int_{\{|\xi| > 1\}} |\hat{u}_n(\xi) - \hat{u}_m(\xi)|^2 \Psi(\xi) d\xi \\
    &\le C \mathcal{E}^\alpha(u_n - u_m, u_n - u_m) \to 0.
  \end{align}
  Thus, $\{\hat{w}_n\}$ is a Cauchy sequence in $L^2(\mathbb{R}^d)$,
  which implies $\{w_n\}$ converges strongly in $L^2(\mathbb{R}^d)$.

  For the part $v_n$, since $\Psi(\xi) \asymp |\xi|^\alpha$ for $|\xi| \le 1$,
  we can apply the Schwarz inequality to obtain
  \begin{align}
    \int_{\mathbb{R}^d} |\hat{v}_n(\xi) - \hat{v}_m(\xi)| d\xi 
    &= \int_{\{|\xi| \le 1\}} |\hat{u}_n(\xi) - \hat{u}_m(\xi)| d\xi \\
    &\le \left( \int_{\{|\xi| \le 1\}} |\hat{u}_n(\xi) - \hat{u}_m(\xi)|^2
      |\xi|^\alpha d\xi \right)^{1/2} \left( \int_{\{|\xi| \le 1\}} |\xi|^{-\alpha} d\xi \right)^{1/2} \\
    &\le C' \mathcal{E}^\alpha(u_n - u_m, u_n - u_m)^{1/2}
      \left( \int_{\{|\xi| \le 1\}} |\xi|^{-\alpha} d\xi \right)^{1/2}.
  \end{align}
  Under the transience condition $d > \alpha$,
  the integral $\int_{\{|\xi| \le 1\}} |\xi|^{-\alpha} d\xi$ is finite.
  Therefore, $\{\hat{v}_n\}$ is a Cauchy sequence in $L^1(\mathbb{R}^d)$.
  This implies that $\{v_n\}$ is a Cauchy sequence in $L^\infty(\mathbb{R}^d)$
  and thus converges uniformly.

  Since $\{w_n\}$ converges in $L^2(\mathbb{R}^d)$ and $\{v_n\}$
  converges in $L^\infty(\mathbb{R}^d)$,
  their sum $u_n = v_n + w_n$ is a Cauchy sequence
  in $L^2_{\loc}(\mathbb{R}^d)$ and thus converges to some
  $\tilde{u} \in L^2_{\loc}(\mathbb{R}^d)$.
  Furthermore, since $L^2_{\loc}$-convergence implies pointwise convergence
  $m$-a.e. along a subsequence and $u_n \to u$ $m$-a.e. by definition,
  the limits must coincide (i.e., $\tilde{u} = u$ $m$-a.e.).
  Therefore, the limit $u$ belongs to $L^2_{\loc}(\mathbb{R}^d)$.
\end{proof}

\begin{lemma}
  \label{lem:green_potential_bound}
  Let $\psi \in C_0^\infty(\mathbb{R}^d)$ be a non-negative function,
  and define $K_\psi(y) = \int_{\mathbb{R}^d} (\psi(x)-\psi(y))^2 J(x,y)dx$.
  Then the Green potential of $K_\psi(y)dy$ is uniformly bounded
  on $\mathbb{R}^d$, i.e.,
\begin{equation*}
    \sup_{x\in\mathbb{R}^d} \int_{\mathbb{R}^d} G(x, y)K_\psi(y)dy < \infty.
\end{equation*}
\end{lemma}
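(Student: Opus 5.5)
The plan is to bound $K_\psi$ pointwise by an explicit function and then estimate its Green potential using Theorems \ref{thm:Green} and \ref{thm:Levy}. Fix $R\ge1$ with $\supp\psi\subset B(R)$, and write $J(x,y)=j(|x-y|)$.

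\textbf{Step 1: $K_\psi$ is bounded.} Split the $x$-integral into $|x-y|\le1$ and $|x-y|>1$.
\begin{itemize}
\item Near the diagonal, $(\psi(x)-\psi(y))^2\le\|\nabla\psi\|_\infty^2|x-y|^2$ and $j(r)\asymp r^{-d}$. This contributes at most $C\int_0^1 r^{2-d}r^{d-1}\,dr<\infty$.
\item Off the diagonal, we use $(\psi(x)-\psi(y))^2\le 4\|\psi\|_\infty^2$. By Theorem \ref{thm:Levy} the tail $\int_{|z|>1}j(|z|)\,dz$ is finite: the decay is $r^{-d-\alpha}$ for $\alpha<2$, and exponential for $\alpha=2$.
\end{itemize}
Hence $K_\psi\in L^\infty$.

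\textbf{Step 2: decay of $K_\psi$ at infinity.} For $|y|\ge2R$ we have $\psi(y)=0$, so
\[
K_\psi(y)=\int_{B(R)}\psi(x)^2J(x,y)\,dx\le \|\psi\|_\infty^2|B(R)|\sup_{|z|\ge|y|/2}j(|z|).
\]
By Theorem \ref{thm:Levy}, this gives $K_\psi(y)\le C|y|^{-d-\alpha}$ when $\alpha<2$, and exponential decay when $\alpha=2$. In either case
\[
K_\psi(y)\le C(1+|y|)^{-d-\alpha}.
\]

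\textbf{Step 3: Green potential.} We now bound $\sup_x\int G(x,y)(1+|y|)^{-d-\alpha}\,dy$. Split the $y$-integral into $|x-y|\le1$ and $|x-y|>1$.
\begin{itemize}
\item On $|x-y|\le1$, bound the weight by $1$. Theorem \ref{thm:Green} gives $g(r)\asymp r^{-d}(\log(1/r))^{-2}$ near $0$, so the local part is
\[
\int_{|z|\le1}g(|z|)\,dz\le C\int_0^{1}\frac{dr}{r\log^2(1/r)}<\infty,
\]
after treating the region $r\in[1/2,1]$ by continuity.
\item On $|x-y|>1$, use $g(r)\asymp r^{\alpha-d}$. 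We must show that $\int_{|x-y|>1}|x-y|^{\alpha-d}(1+|y|)^{-d-\alpha}\,dy$ is bounded uniformly in $x$. Split further according to whether $|y|\le|x|/2$ or $|y|>|x|/2$:
\begin{itemize}
\item If $|y|\le|x|/2$, then $|x-y|\ge|x|/2$. The integrand is at most $C\min(1,|x|^{\alpha-d})(1+|y|)^{-d-\alpha}$, which is integrable in $y$.
\item If $|y|>|x|/2$, bound $|x-y|^{\alpha-d}\le1$ (using $|x-y|>1$ and $d>\alpha$). The integrand is then at most $(1+|y|)^{-d-\alpha}$, again integrable.
\end{itemize}
\end{itemize}
Both pieces are bounded uniformly in $x$, and the claim follows.

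\textbf{Main obstacle.} I expect the main point to be the logarithmic singularity of $G$ at the diagonal. The only reason $G(x,\cdot)$ is locally integrable is the factor $\log^{-2}$, since $r^{-d}$ alone would diverge. So I will check that this integral converges and that the asymptotics of Theorem \ref{thm:Green} can be upgraded to two-sided bounds $\asymp$ on the relevant ranges, using positivity and continuity of $g$ in between, exactly as in the proof of Lemma \ref{lem:3G}.

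Alternatively, $K_\psi\,dy\in\mathcal K_\infty$ could be shown and Lemma \ref{lem:relation-K}(iii) invoked, but the direct estimate above is simpler.
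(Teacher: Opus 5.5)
Your proposal is correct and follows essentially the same route as the paper: $K_\psi$ is bounded, it decays like $|y|^{-d-\alpha}$ because of the tail of the L\'evy density, $G$ is locally integrable thanks to the $\log^{-2}$ factor, and the $r^{\alpha-d}$ asymptotics handle the far field. The only real difference is the far region. There your bound already gives uniform boundedness, and in fact $g\le C$ on $[1,\infty)$ together with $K_\psi\in L^1$ suffices without any further split. The paper's split into $\{|x-y|\le|x|/2\}$ and $\{|x-y|>|x|/2\}$ additionally shows that the potential decays as $|x|\to\infty$. One small fix: the displayed radial integral should run over $(0,1/2]$, since $\int^1 dr/(r\log^2(1/r))$ diverges at $r=1$, as your remark about $[1/2,1]$ already anticipates.
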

\begin{proof}
  Since $\psi \in C_0^\infty(\mathbb{R}^d)$ is globally Lipschitz and bounded,
  we have $(\psi(x)-\psi(y))^2 \le C(1 \wedge |x-y|^2)$.
  The property of the L\'evy measure
  $\int_{\mathbb{R}^d} (1 \wedge |x-y|^2)J(x,y)dx < \infty$ ensures that
  $K_\psi(y)$ is bounded by some constant $M > 0$ on $\mathbb{R}^d$.
  Furthermore, since $|x-y| \ge |y|/2$ for $x \in \supp(\psi)$ and
  sufficiently large $|y|$, it follows directly from
  Theorem \ref{thm:Levy} that
  \begin{align}
  \label{eq:74}
    K_{\phi}(y) &= \int_{\mathbb{R}^{d}}^{ } \psi(x)^{2} J(x,y) dx
                  \le C \int_{\supp(\psi)}^{ } J(x,y) dx \\
                  &\le C' \int_{\supp(\psi)}^{ } J(x,y) dx \le C'' |y|^{-d-\alpha}.
  \end{align}
  (Note that when $\alpha=2$,
  the jump kernel $J(x,y)$ and this integral actually decay exponentially as
  $|y| \to \infty$, but the polynomial bound $C|y|^{-d-\alpha}$ is sufficient
  for our upper estimate for all $\alpha \in (0, 2]$). 
  Hence we see that $K_\psi(y)$ is bounded on $\mathbb{R}^d$
  and decays at least at the order of $|y|^{-d-\alpha}$ at infinity.
  Let $0 < \varepsilon < 1$.
  We divide the domain of integration into 
  $\{|x-y| < \varepsilon\}$ and $\{|x-y| \ge \varepsilon\}$. 

  \begin{enumerate}[(i)]
  \item In the region $|x-y| < \varepsilon$:
    Using the uniform bound $K_\psi(y) \le M$, we have
    \begin{align}
    \label{eq:87}
    \int_{|x-y| < \varepsilon} G(x,y)K_\psi(y)dy \le M \int_{|x-y| < \varepsilon} G(x,y)dy.
    \end{align}
    By Theorem 2.1, $G(x,y) \sim C|x-y|^{-d}(\log(1/|x-y|))^{-2}$ near the origin.
    Since this singularity is locally integrable in $\mathbb{R}^d$,
    the integral converges to a finite constant independent of $x$.
  \item In the region $|x-y| \ge \varepsilon$:
    Note that in this region, $G(x,y)$ is continuous and bounded
    by $g(\varepsilon) < \infty$.
    Thus, the integral over any bounded intermediate region is trivially finite.
    If the starting point $x$ is confined to a bounded region,
    $|x-y| \sim |y|$ as $|y| \to \infty$.
    Thus $G(x,y)$ decays as $|x-y|^{\alpha-d} \le C|y|^{\alpha-d}$
    and $K_\psi(y)$ decays at least at the order of $|y|^{-d-\alpha}$ at infinity,
    making the integrand decay as $|y|^{-2d}$. Hence the integral converges.

    To ensure the uniform boundedness as $|x| \to \infty$,
    we further split the region $\{|x-y| \ge \varepsilon\}$
    into $\{|x-y| \le |x|/2\}$ and $\{|x-y| > |x|/2\}$.
    \begin{itemize}
    \item In the region $\{|x-y| \le |x|/2\}$: 
      We have $|y| \ge |x|/2$.
      Thus $K_\psi(y) \le C|y|^{-d-\alpha} \le C'|x|^{-d-\alpha}$.
      Factoring this out, the remaining integral of $G(x,y)$
      over this local region grows at most as $|x|^\alpha$.
      Multiplying these gives an overall bound of order $|x|^{-d}$,
      which decays to $0$ as $|x| \to \infty$.
    \item In the region $\{|x-y| > |x|/2\}$: 
      We have $G(x,y) \le C|x-y|^{\alpha-d} \le C'|x|^{\alpha-d}$.
      Since $K_\psi(y)$ is globally integrable ($\int_{\mathbb{R}^d} K_\psi(y)dy < \infty$),
      the integral over this region is bounded by $C''|x|^{\alpha-d}$.
      Because of the transience condition $d > \alpha$,
      this also decays to $0$ as $|x| \to \infty$.
    \end{itemize}
  \end{enumerate}
  Consequently, the Green potential is uniformly bounded across the entire space.
\end{proof}
\begin{lemma}
  \label{lem:multiplier}
  Let $\psi \in C_{0}^{\infty}(\bR^{d})$ be a non-negative function. 
  If $u \in \cF_{e}^{\alpha}$, then the pointwise product $u \psi$
  belongs to $\cF^{\alpha}$.
\end{lemma}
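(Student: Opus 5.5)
We must show that $u\psi \in L^2(dx)$ and that $\cE^{\alpha}(u\psi,u\psi)<\infty$. The $L^2$ part is immediate from Lemma \ref{lem:L2loc}: since $u \in L^2_{\loc}(dx)$ and $\psi$ is bounded with compact support, $\int (u\psi)^2 dx \le \|\psi\|_\infty^2 \int_{\supp\psi} u^2 dx<\infty$.

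For the energy, we use the jump representation \eqref{eq:14}, valid for elements of $\cF_e^\alpha$ through their quasi-continuous versions. We split the increment as
\begin{align*}
u(x)\psi(x)-u(y)\psi(y) = \psi(x)\bigl(u(x)-u(y)\bigr) + u(y)\bigl(\psi(x)-\psi(y)\bigr).
\end{align*}
Using $(a+b)^2\le 2a^2+2b^2$, this gives
\begin{align*}
\cE^{\alpha}(u\psi,u\psi) \le 2\|\psi\|_\infty^2\, \cE^{\alpha}(u,u) + \iint u(y)^2 (\psi(x)-\psi(y))^2 J(x,y)\,dx\,dy .
\end{align*}
The second term equals $\int u(y)^2 K_\psi(y)\,dy$, with $K_\psi$ as in Lemma \ref{lem:green_potential_bound}.

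Lemma \ref{lem:green_potential_bound} states that the measure $K_\psi(y)dy$ has bounded Green potential. We then apply the Stollmann--Voigt inequality \eqref{eq:SV} with $\beta=0$, which is valid on $\cF_e^\alpha$. This yields
\begin{align*}
\int u^2 K_\psi\,dy \le \|G(K_\psi dx)\|_\infty\, \cE^\alpha(u,u) < \infty .
\end{align*}
Applying \eqref{eq:SV} requires $K_\psi dx \in \cK$. This follows because $K_\psi$ is bounded: $\sup_x \bE_x[\int_0^t K_\psi(X_s)ds] \le t\|K_\psi\|_\infty \to 0$. Alternatively, one can argue directly from \eqref{eq:69}, whose proof only needs a bounded Green potential.

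The main technical point is justifying that the jump-form expression applies to $u\in\cF_e^\alpha$ and that the finite bound yields membership in $\cF^\alpha$, not merely finite energy. To handle this, take an approximating sequence $u_n\in\cF^\alpha$ that is $\cE^\alpha$-Cauchy with $u_n\to u$ a.e. Each $u_n\psi$ lies in $\cF^\alpha$, since $\psi$ is a normal contraction multiplier up to constants and the above estimate applies. Apply the estimate to $u_n-u_m$ to see that $\{u_n\psi\}$ is $\cE^\alpha$-Cauchy. It is also $L^2$-Cauchy, because $\int (u_n-u_m)^2\psi^2dx \le \|\psi\|_\infty^2\|G\psi^2\|_\infty\,\cE^\alpha(u_n-u_m,u_n-u_m)$ by \eqref{eq:69}, where $\psi^2 dx\in\cK_\infty$. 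Hence $u_n\psi$ converges in $\cE_1^\alpha$ to an element of $\cF^\alpha$. This limit coincides a.e. with $u\psi$, and the conclusion follows.
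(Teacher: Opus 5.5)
Your proposal is correct and follows essentially the same route as the paper: you use the same splitting of $u(x)\psi(x)-u(y)\psi(y)$, control $\int u^2K_\psi\,dy$ via Lemma \ref{lem:green_potential_bound} and the Stollmann--Voigt inequality, and pass from $\cF^{\alpha}$ to $\cF_e^{\alpha}$ by applying the estimate to $u_n-u_m$. The only difference is at the end, where you check directly that $\{u_n\psi\}$ is $L^2$-Cauchy (hence $\cE_1^{\alpha}$-convergent in $\cF^{\alpha}$) instead of invoking $\cF_e^{\alpha}\cap L^2(\bR^d)=\cF^{\alpha}$ together with Lemma \ref{lem:L2loc}; the factor $\|\psi\|_\infty^2$ in your $L^2$-Cauchy bound is superfluous (and harmless), since \eqref{eq:69} applied to the measure $\psi^2dx$ already gives the constant $\|G(\psi^2)\|_\infty$.
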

\begin{proof}
  First, we consider the case where $u \in \mathcal{F}^\alpha$.
  Since $u \in \mathcal{F}^\alpha \subset L^2(\mathbb{R}^d)$ and
  $\psi \in C_0^\infty(\mathbb{R}^d)$,
  it is clear that the product $u\psi \in L^2(\mathbb{R}^d)$.
  We evaluate the energy of $u\psi$.
  Noting the algebraic identity
  \begin{align}
    (u(x)\psi(x) - u(y)\psi(y))^2 \le 2\psi(x)^2(u(x)-u(y))^2 + 2u(y)^2(\psi(x)-\psi(y))^2,
  \end{align}
  and using the symmetry $J(x, y) = J(y, x)$, integrating this inequality yields
  \begin{align}
    \mathcal{E}^\alpha(u\psi, u\psi)
    &\le 2 \iint_{\mathbb{R}^d \times \mathbb{R}^d} \psi(x)^2(u(x)-u(y))^2 J(x,y) dx dy \\
    &\hspace{1cm}
      + 2 \iint_{\mathbb{R}^d \times \mathbb{R}^d} u(y)^2(\psi(x)-\psi(y))^2 J(x,y) dx dy.
  \end{align}
  For the first term, since $\psi$ is bounded,
  it is bounded by $2\|\psi\|_\infty^2 \mathcal{E}^\alpha(u, u)$. 
  For the second term,
  by using $K_{\psi}$ in Lemma 
  \ref{lem:green_potential_bound} 
  it can be rewritten as $2\int_{\mathbb{R}^d} u(y)^2 K_\psi(y) dy$. 
By Lemma \ref{lem:green_potential_bound}, we know that the Green potential of $K_\psi(y)dy$ is bounded 
on $\mathbb{R}^d$. Therefore, the transient Poincar\'e-type inequality 
(Theorem \ref{thm:SV}) guarantees that the second term is bounded by $C \mathcal{E}^\alpha(u, u)$ for some constant $C > 0$.
Combining these estimates, we obtain
\begin{align}
    \mathcal{E}^\alpha(u\psi, u\psi) \le (2\|\psi\|_\infty^2 + C)\mathcal{E}^\alpha(u, u) < \infty.
\end{align}
Since $u\psi \in L^2(\mathbb{R}^d)$ and its energy is finite, we have $u\psi \in \mathcal{F}^\alpha$. 

This result can be extended to $u \in \mathcal{F}_e^\alpha$. Indeed, by the definition of the extended Dirichlet space, there exists an approximating sequence $\{u_n\} \subset \mathcal{F}^\alpha$ such that $u_n \to u$ $m$-a.e. and $\mathcal{E}^\alpha(u_n - u_m, u_n - u_m) \to 0$ as $n, m \to \infty$. Applying the above inequality to the difference $u_n - u_m \in \mathcal{F}^\alpha$, we have
\begin{align}
    \mathcal{E}^\alpha(u_n\psi - u_m\psi, u_n\psi - u_m\psi) \le (2\|\psi\|_\infty^2 + C)\mathcal{E}^\alpha(u_n - u_m, u_n - u_m) \to 0.
\end{align}
This implies that $\{u_n\psi\}$ is an $\mathcal{E}^\alpha$-Cauchy sequence in $\mathcal{F}^\alpha$. Since $u_n\psi \to u\psi$ $m$-a.e., the limit $u\psi$ belongs to $\mathcal{F}_e^\alpha$ by definition. Furthermore, by Lemma 4.2, $u \in \mathcal{F}_e^\alpha$ implies $u \in L^2_{\loc}(\mathbb{R}^d)$. Since $\psi$ has compact support, it follows that $u\psi \in L^2(\mathbb{R}^d)$. Therefore, $u\psi \in \mathcal{F}_e^\alpha \cap L^2(\mathbb{R}^d) = \mathcal{F}^\alpha$.
\end{proof}

\begin{lemma}
  \label{lem:compact_embedding}
  Let $\cF_{e}^{\alpha}$ be the extended Dirichlet space associated with
  the geometric $\alpha$-stable process. 
  Under the transience condition $d > \alpha$,
  the space $\cF_{e}^{\alpha}$ is compactly embedded into $L^{2}_{\text{loc}}(\bR^{d}, dx)$. 
  That is, any sequence $\{u_{n}\}$ that is bounded in $\cF_{e}^{\alpha}$ has
  a subsequence that converges strongly in $L^{2}(K, dx)$ for any compact set $K \subset \bR^{d}$.
\end{lemma}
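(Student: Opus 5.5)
The plan is to localize with a smooth cutoff, reducing everything to a family of functions supported in a fixed ball, and then run a Rellich-type argument on the Fourier side. The only property of the symbol that matters is that $\log(1+|\xi|^{\alpha}) \to \infty$ as $|\xi| \to \infty$; the logarithmic growth rate is irrelevant. Fix a compact set $K$ and $\psi \in C_{0}^{\infty}(\bR^{d})$ with $0 \le \psi \le 1$ and $\psi = 1$ on $K$. Let $\{u_{n}\}$ be bounded in $\cF_{e}^{\alpha}$, say $\cE^{\alpha}(u_{n},u_{n}) \le M$. By Lemma \ref{lem:multiplier} and its proof, $w_{n} := u_{n}\psi \in \cF^{\alpha}$ and
\begin{align}
  \cE^{\alpha}(w_{n},w_{n}) \le (2\|\psi\|_{\infty}^{2} + C) M .
\end{align}
Since $\|u_{n}-u_{m}\|_{L^{2}(K)} \le \|w_{n}-w_{m}\|_{2}$, it suffices to extract a subsequence of $\{w_{n}\}$ that is Cauchy in $L^{2}(\bR^{d})$. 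A diagonal argument over $K = \overline{B(j)}$, $j \in \mathbb{N}$, then gives one subsequence that works for every compact set.

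The first step is a uniform $L^{2}$ bound on $\{w_{n}\}$. I would reuse the frequency splitting from the proof of Lemma \ref{lem:L2loc}, applied to $u_{n}$ itself rather than to differences. This splitting, legitimate after approximating $u_{n}$ by elements of $\cF^{\alpha}$, writes $u_{n} = v_{n} + z_{n}$. Here $\|\hat v_{n}\|_{1} \le C\,\cE^{\alpha}(u_{n},u_{n})^{1/2}$, which uses $d > \alpha$, so $\|v_{n}\|_{\infty}$ is bounded. Also $\|z_{n}\|_{2}^{2} \le (\log 2)^{-1}\cE^{\alpha}(u_{n},u_{n})$. Together these give
\begin{align}
  \|w_{n}\|_{2} \le \|\psi\|_{2}\|v_{n}\|_{\infty} + \|z_{n}\|_{2} \le C_{\psi} M^{1/2}.
\end{align}
Since every $w_{n}$ is supported in $S := \supp\psi$, Cauchy--Schwarz also yields $\|w_{n}\|_{1} \le |S|^{1/2}\|w_{n}\|_{2}$, which is uniformly bounded.

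The second step is compactness in frequency, treating high and low frequencies separately. For $R > 0$, the energy bound gives a uniform small tail:
\begin{align}
  \int_{|\xi|>R} |\hat w_{n}(\xi)|^{2} d\xi \le \frac{\cE^{\alpha}(w_{n},w_{n})}{\log(1+R^{\alpha})} \le \frac{C M}{\log(1+R^{\alpha})} \longrightarrow 0 \quad (R \to \infty).
\end{align}
For low frequencies, the compact support gives $|\hat w_{n}(\xi)| \le \|w_{n}\|_{1}$. It also gives $|\nabla \hat w_{n}(\xi)| \le \int_{S}|x||w_{n}(x)|dx \le C\|w_{n}\|_{1}$. Hence $\{\hat w_{n}\}$ is uniformly bounded and equicontinuous on each ball $\{|\xi| \le R\}$. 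Arzel\`a--Ascoli and a diagonal argument in $R \in \mathbb{N}$ produce a subsequence along which $\hat w_{n}$ converges uniformly on every bounded set. Given $\varepsilon > 0$, I choose $R$ so that the tail bound is below $\varepsilon$. Then
\begin{align}
  \|w_{n}-w_{m}\|_{2}^{2} = \|\hat w_{n}-\hat w_{m}\|_{2}^{2} \le |B(R)|\sup_{|\xi|\le R}|\hat w_{n}-\hat w_{m}|^{2} + 4\varepsilon ,
\end{align}
and the first term tends to zero along the subsequence. So $\{w_{n}\}$ is Cauchy in $L^{2}$, and hence $\{u_{n}\}$ is Cauchy in $L^{2}(K)$.

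I expect the main obstacle to be the first step, the uniform $L^{2}$ bound on $u_{n}\psi$. The form $\cE^{\alpha}$ controls no $L^{2}$ norm globally in the transient case, so the bound has to come from the low/high frequency decomposition. This is exactly where $d>\alpha$ enters, through the integrability of $|\xi|^{-\alpha}$ near the origin. The bound in Lemma \ref{lem:L2loc} was stated only for Cauchy differences, so one must check that it applies to $u_{n}$ itself via approximation in $\cF^{\alpha}$. That check uses Fatou's lemma together with the a.e. convergence in the definition of $\cF_{e}^{\alpha}$. By contrast, the weak logarithmic growth of the symbol causes no difficulty, since the tail estimate only needs the symbol to diverge.
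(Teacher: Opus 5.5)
Your proposal is correct and follows the same skeleton as the paper. You localize with the cutoff via Lemma \ref{lem:multiplier}, show that $\{u_n\psi\}$ is $\cE^{\alpha}_1$-bounded, apply a Rellich-type compactness that uses only $\log(1+|\xi|^{\alpha})\to\infty$, and finish with a diagonal argument; the only difference in route is that the paper cites a Rellich-type theorem for pseudodifferential operators from Jacob's monograph, while you prove that step by hand via the Fourier tail bound and Arzel\`a--Ascoli.

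Your quantitative bound $\|u_n\psi\|_2\le C_\psi\,\cE^{\alpha}(u_n,u_n)^{1/2}$, extracted from the frequency splitting in the proof of Lemma \ref{lem:L2loc}, improves on the paper. The paper justifies the uniform $L^2$ bound only by the membership $u_n\in L^2_{\text{loc}}$. That gives $u_n\psi\in L^2$ for each $n$, but not a bound uniform in $n$.
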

\begin{proof}
  Let $\{u_{n}\}$ be a bounded sequence in $\cF_{e}^{\alpha}$,
  i.e., $\sup_{n} \cE^{\alpha}(u_{n}, u_{n}) < \infty$. 
  To prove the local compactness, it suffices to show that for any compact set $K \subset \bR^{d}$,
  there exists a subsequence that converges strongly in $L^{2}(K, dx)$.
  
  Let $K$ be an arbitrary compact set.
  We take a smooth cutoff function $\psi \in C_{0}^{\infty}(\bR^{d})$ such that $0 \le \psi \le 1$
  and $\psi \equiv 1$ on $K$. 
  We define a localized sequence $v_{n} = u_{n} \psi$.
  To establish the local strong convergence of $\{u_{n}\}$,
  we first show that the localized sequence $v_{n} = u_{n} \psi$ is bounded
  in the Dirichlet space $\cF^{\alpha} = \cF_{e}^{\alpha} \cap L^{2}(\bR^{d})$.
  By Lemma \ref{lem:multiplier}, we have already known that the pointwise product
  $v_{n}$ belongs to $\cF_{e}^{\alpha}$, and its energy $\cE^{\alpha}(v_n, v_n)$ is uniformly bounded
  by the original energy of $u_n$. 
  Moreover, since $\psi$ has compact support and $u_{n} \in L^{2}_{\text{loc}}(dx)$
  by Lemma \ref{lem:L2loc}, the sequence $\{v_{n}\}$ is also bounded in $L^{2}(\bR^{d})$. 
  Thus, $\{v_{n}\}$ is $\mathcal{E}_{1}$-bounded in $\cF^{\alpha}$.

  Recall that the characteristic exponent of the geometric $\alpha$-stable process is
  $\Psi(\xi) = \log(1 + |\xi|^{\alpha})$, which satisfies $\lim_{|\xi| \to \infty} \Psi(\xi) = \infty$. 
  According to Rellich type compactness theorem
  for pseudodifferential operators (cf. \cite[Remark 3.10.6]{Jacob2001Vol1}),
  such a growth condition on the symbol $\Psi(\xi)$ ensures that any bounded sequence
  in $\cF^{\alpha}$ has a subsequence that converges strongly in $L^{2}_{\text{loc}}(\bR^{d}, dx)$.
  Since our sequence $\{v_{n}\}$ is bounded in $\cF^{\alpha}$,
  there exists a subsequence $\{v_{n_{k}}\}$ that converges strongly in $L^{2}_{\text{loc}}(\bR^{d}, dx)$. 
  Since $v_{n} = u_{n}$ on $K$, this implies that $\{u_{n_{k}}\}$ converges strongly in $L^{2}(K, dx)$. 
  By a diagonal argument over an increasing sequence of compact sets covering $\bR^{d}$,
  we conclude that $u_{n}$ converges strongly in $L^{2}_{\text{loc}}(\bR^{d}, dx)$.
\end{proof}

Next, we give an extension of Oshima's inequality
(\cite{Oshima1982Potential923}).
Before stating the next proposition, we state a technical remark.
Strictly speaking, to execute the limit operation in the proof of our main theorem, 
it would be sufficient to use an indicator function of a compact set, 
as in Oshima's original work. 
However, we establish the generalized inequality using a smooth cutoff function 
$\psi \in C_{0}^{\infty}(\bR^{d})$ in the following proposition. 
We adopt this formulation because such a function serves as a valid multiplier 
in the extended Dirichlet space, connecting potential theory and local energy estimates, which gives a stronger result.

\begin{proposition}
  \la{prop:Oshima-ineq}
  Let $h$ be the strictly positive continuous bounded ground state in \eqref{eq:22}. 
  There exists a positive function $g \in L^{1}(h^{2} dx)$ and a non-negative function
  $\psi \in C_{0}^{\infty}(\bR^{d})$ with $\int_{\bR^{d}}^{ } \psi h^{2} dx = 1$ such that
  for any $u \in \cF_{e}^{\alpha}$, 
  \begin{align}
  \label{eq:21}
    &\hspace{-1cm}
      \int_{\bR^{d}}^{ } \left| u(x) - h(x) L \left( \frac{u}{h} \right) \right| g(x) h(x) dx \\
    &\le C \mathcal{E}^{-\theta_{c} \mu}(u,u)^{1/2} 
    = C \left( \cE^{\alpha}(u,u) -
      \theta_{c} \int_{\bR^{d}}^{ } u(x)^{2} d\mu \right)^{1/2}
  \end{align}
  where $L(u)$ is a linear functional defined by 
  \begin{align}
  \label{eq:26}
    L(u) = \int_{\bR^{d}}^{ } u(x) \psi(x) h(x)^{2} dx.
  \end{align}
\end{proposition}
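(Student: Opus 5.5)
The plan is to pass to the $h$-transformed process $\bfM^{h}$ and apply Oshima's inequality for Harris recurrent symmetric processes to it. By Proposition \ref{prop:Harris}, $\bfM^{h}$ is Harris recurrent and $h^{2}dx$-symmetric. Let $(\cE^{h},\cF^{h}_{e})$ denote its extended Dirichlet form.

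\textbf{Step 1: ground state transform.} For $u \in \cF_{e}^{\alpha}$, put $f = u/h$. Since $h$ is strictly positive, bounded and continuous, the standard $h$-transform identity gives $f \in \cF^{h}_{e}$ and
\begin{align}
  \cE^{h}(f,f) = \cE^{\alpha}(u,u) - \theta_{c}\int_{\bR^{d}} u^{2}\, d\mu = \cE^{-\theta_{c}\mu}(u,u).
\end{align}
To justify this I would use the Euler--Lagrange equation \eqref{eq:33}, namely $\cE^{\alpha}(h,\phi) = \theta_{c}\int h\phi\, d\mu$. Together with the Beurling--Deny representation \eqref{eq:14}, it yields
\begin{align}
  \cE^{\alpha}(hf,hf) - \theta_{c}\int h^{2}f^{2}\,d\mu = \frac12\iint (f(x)-f(y))^{2}h(x)h(y)J(x,y)\,dx\,dy .
\end{align}
I would verify this first for $f \in C_{0}^{\infty}$. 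Lemma \ref{lem:multiplier} is used here, since it gives $h\psi \in \cF^{\alpha}$ and keeps every term finite. The identity then extends to $\cF_{e}^{\alpha}$ by the $\cE$-Cauchy approximation that defines $\cF_{e}^{\alpha}$.

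\textbf{Step 2: Oshima's inequality for $\bfM^{h}$.} For an irreducible Harris recurrent symmetric process with symmetrizing measure $m_{h} = h^{2}dx$, Oshima \cite{Oshima1982Potential923} gives the following. Fix a nonnegative $\psi$ with $\int \psi\, dm_{h} = 1$. Then there is a strictly positive $g \in L^{1}(m_{h})$ with
\begin{align}
  \int |f - L(f)|\, g\, dm_{h} \le C\, \cE^{h}(f,f)^{1/2}, \qquad f \in \cF^{h}_{e},
\end{align}
where $L(f) = \int f\psi\, dm_{h}$. I would choose $\psi \in C_{0}^{\infty}$, nonnegative, normalized by $\int \psi h^{2}dx = 1$. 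This is possible because $h>0$ is continuous. The function $g$ comes from the recurrent potential operator of $\bfM^{h}$ relative to the measure $\psi\, m_{h}$. Concretely, I would take $g = R^{h}_{1}$-type data, for instance $g$ with $\int_0^\infty$-resolvent bounds. A convenient choice is $g$ so that $g\, m_{h}$ is the equilibrium-type measure $\psi h^{2}dx$, modified to be strictly positive.

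The substitution is then immediate: $|f - L(f)|\, g\, h^{2} = |u - hL(u/h)|\, g\, h$. Combined with Step 1, this is exactly \eqref{eq:21}.

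\textbf{Main obstacle.} The hard step is Step 2, specifically establishing Oshima's inequality without ultracontractivity. Oshima's original proof requires Harris recurrence and a reference function $\psi$ whose "recurrent potential" $\int_0^\infty (P^{h}_{t}\varphi - \int \varphi\psi\,dm_{h})dt$ is controlled. The cleanest route I would try first avoids pointwise density bounds. The idea is to apply the Harris recurrence of Proposition \ref{prop:Harris} to show that the time-changed process by $\psi\, m_{h}$ is positive recurrent in the Fukushima sense. I would then use the Poincaré-type bound
\begin{align}
  \int (f-L(f))^{2}\psi\, dm_{h} \le C\,\cE^{h}(f,f).
\end{align}
This bound should follow from the compact embedding in Lemma \ref{lem:compact_embedding} by the usual contradiction argument. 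A normalized sequence violating it would converge locally in $L^{2}$ to a nonconstant $\cE^{h}$-null function, which is impossible by irreducibility.

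From this local Poincaré inequality, the weighted $L^{1}$ estimate with a globally positive $g$ follows by Oshima's chaining argument. One takes $g = \sum_{k} 2^{-k} c_{k}\psi_{k}$ over an exhausting family of cutoffs $\psi_{k}$. Each $\psi_{k}$ satisfies the analogous local Poincaré inequality, and the means $L_{k}(f)$ are compared with $L(f)$ via Cauchy--Schwarz.
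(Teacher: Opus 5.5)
Your main line is the paper's proof. You apply Oshima's inequality to the $h^{2}dx$-symmetric, Harris recurrent process $\bfM^{h}$ and then substitute $v=u/h$ using $\cE^{h}(u/h,u/h)=\cE^{-\theta_{c}\mu}(u,u)$; the paper uses this identity without proof. The obstacle you flag is not one: as the paper uses it, \cite[Theorem 4.8.2, Corollary 4.8.1]{FOT} needs only Harris recurrence of $\bfM^{h}$, which is Proposition \ref{prop:Harris}. No ultracontractivity or density bounds enter.

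What your Step 2 leaves out is the one manipulation the paper actually performs. Oshima's inequality comes with the mean $K(v)=\int_{A}vh^{2}dx/\int_{A}h^{2}dx$ over a set $A$, and with a weight $g_{0}$ satisfying $\|R^{C}g_{0}\|_{\infty}<\infty$, where $C_{t}=\int_{0}^{t}1_{A}(X_{s})ds$. It is not stated for an arbitrary $\psi$. The paper handles this in three moves:
\begin{enumerate}
\item It takes $\supp\psi\subset A$ and notes $R^{C}\psi\le\|\psi\|_{\infty}\bE_{x}[1-e^{-C_{\infty}}]\le\|\psi\|_{\infty}$.
\item Hence $g=g_{0}+\psi$ is again admissible.
\item It then bounds $|L(v)-K(v)|\le\int|v-K(v)|\psi h^{2}dx\le\int|v-K(v)|gh^{2}dx$ and uses the triangle inequality to pass from $K$ to $L$.
\end{enumerate}
Your description of $g$ (``$R^{h}_{1}$-type data'', ``equilibrium-type measure'') is not a construction and should be replaced by this argument.

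Your fallback (local Poincar\'e inequality plus chaining) is a genuinely different route, and it can be made to work, but not through Lemma \ref{lem:compact_embedding} as stated. That lemma needs $\sup_{n}\cE^{\alpha}(hf_{n},hf_{n})<\infty$. A violating sequence, normalized so that $L(f_{n})=0$, gives you only $\cE^{h}(f_{n},f_{n})\to0$ and $\int f_{n}^{2}\psi h^{2}dx=1$. But $\cE^{\alpha}(hf_{n},hf_{n})=\cE^{h}(f_{n},f_{n})+\theta_{c}\int h^{2}f_{n}^{2}d\mu$, and the last term involves $f_{n}$ globally. You have to localize:
\begin{itemize}
\item On a compact $K$ we have $h\ge c_{K}>0$, so $\iint_{K\times K}(f(x)-f(y))^{2}J\,dx\,dy\le 2c_{K}^{-2}\cE^{h}(f,f)$.
\item Since $J$ is bounded below on $K\times K$, this together with the normalization bounds $\int_{K}f_{n}^{2}dx$.
\item For a cutoff $\eta$ supported in the interior of $K$, $\eta f_{n}$ is then $\cE^{\alpha}_{1}$-bounded, so the Rellich argument gives $L^{2}_{\loc}$ convergence.
\item The limit is a.e.\ constant by Fatou and $h(x)h(y)J(x,y)>0$. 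This contradicts $L(f)=0$ together with $\int f^{2}\psi h^{2}dx=1$.
\end{itemize}
You can even skip comparing means. Prove $\int(f-L(f))^{2}\psi_{k}h^{2}dx\le C_{k}\cE^{h}(f,f)$ directly for each normalized $\psi_{k}$ in an exhausting family, and take $g=\sum_{k}2^{-k}C_{k}^{-1/2}\psi_{k}$. The detour through positive recurrence of the time-changed process is not needed.

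Compared with the paper, this route avoids Harris recurrence and Oshima's potential theory entirely. It uses only local compactness, positivity of $J$, and continuity and positivity of $h$. The paper's route is shorter once Proposition \ref{prop:Harris} is available.

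Finally, in Step 1, checking the identity only for $u=hf$ with $f\in C_{0}^{\infty}$ leaves open whether such $u$ are dense. It is cleaner to do the Beurling--Deny computation for $u\in C_{0}^{\infty}$. There $u^{2}/h\in\cF^{\alpha}$, so the Euler--Lagrange equation applies, and you then extend using Fatou and $0\le\cE^{-\theta_{c}\mu}\le\cE^{\alpha}$.
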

\begin{proof}
  Noting that $\mathbf{M}^h$ is the $h^2dx$-symmetric Harris recurrent
  Markov process, we first fix a measurable set $A$ with $0 < \int_A h(x)^2 dx < \infty$ and define a continuous additive functional $C_t$ by
\begin{align}
\label{eq:82}
  C_{t} &:= \int_0^{t} 1_{A}(X_{s}) ds.
\end{align}
  Let $R^{C}$ be the potential operator associated with $C_t$, defined by
\begin{align}
\label{eq:83}
  R^{C} f(x) &:= \mathbb{E}_{x} \left[ \int_0^{\infty} e^{-C_{t}} f(X_{t}) dt \right] \quad
               \text{for $f \in \mathcal{B}_{b}^{+}(\mathbb{R}^{d})$}. 
\end{align}
  According to Oshima's inequality (\cite{Oshima1982Potential923} or \cite[Theorem 4.8.2]{FOT}),
  there exist a strictly positive function $g_0 \in L^1(h^2dx)$ with $\|R^C g_0\|_\infty < \infty$ and
  a constant $C_0 > 0$ such that for any $v \in \mathcal{F}_{e}^{h}$,
\begin{align}\label{eq:oshima_g0}
  \int_{\mathbb{R}^d} |v(x) - K(v)| g_0(x) h(x)^2 dx &\le C_0 \mathcal{E}^h(v,v)^{1/2},
\end{align}
where $K(v) = \frac{1}{\int_A h(x)^2 dx}\int_A v(x)h(x)^2 dx$.

Let $\psi \in C_0^\infty(\mathbb{R}^d)$ be a non-negative function such that $\int_{\mathbb{R}^d} \psi(x) h(x)^2dx = 1$ and $\supp(\psi) \subset A$. 
Since $\psi(x) \le \|\psi\|_\infty 1_A(x)$ for all $x \in \mathbb{R}^d$, we have
\begin{align}
  R^C \psi(x) &\le \|\psi\|_\infty \mathbb{E}_x \left[ \int_0^\infty e^{-C_t} 1_A(X_t) dt \right] \notag \\
  &= \|\psi\|_\infty \mathbb{E}_x \left[ \int_0^\infty e^{-C_t} dC_t \right] \notag \\
  &= \|\psi\|_\infty \mathbb{E}_x \left[ 1 - e^{-C_\infty} \right] \le \|\psi\|_\infty. \notag
\end{align}
Then we see that $R^{C} \psi$ is bounded, i.e., $\|R^C \psi\|_\infty < \infty$. 
Thus, we may replace the initial weight function $g_0$ with $g := g_0 + \psi$, because the new weight function $g$ still satisfies the boundedness of the potential:
\begin{align}
  \|R^C g\|_\infty &\le \|R^C g_0\|_\infty + \|R^C \psi\|_\infty < \infty. \notag
\end{align}
Note that $g \in L^1(h^2dx)$ is strictly positive and $g(x) \ge \psi(x)$ for all $x \in \mathbb{R}^d$.
Consequently, by \cite[Corollary 4.8.1]{FOT}, there exists a constant $C_1 > 0$ such that
\begin{align}
  \label{eq:oshima_g}
  \int_{\mathbb{R}^d} |v(x) - K(v)| g(x) h(x)^2 dx &\le C_1 \mathcal{E}^h(v,v)^{1/2},
  \quad \text{for all } v \in \mathcal{F}_e^h.
\end{align}

Defining the linear functional $L(v) = \int_{\mathbb{R}^d} v(x)\psi(x)h(x)^2 dx$, we can evaluate the difference between $L(v)$ and $K(v)$
by noting that $K(v) = \int_{\mathbb{R}^d} K(v)\psi(x)h(x)^2 dx$.
Since $\psi(x) \le g(x)$, we obtain
\begin{align}
|L(v) - K(v)| &= \left| \int_{\mathbb{R}^d} (v(x) - K(v)) \psi(x) h(x)^2 dx \right| \notag \\
                &\le \int_{\mathbb{R}^d} |v(x) - K(v)| \psi(x) h(x)^2 dx \notag \\
  &\le \int_{\mathbb{R}^d} |v(x) - K(v)| g(x) h(x)^2 dx \le C_1 \mathcal{E}^h(v,v)^{1/2}. \notag
\end{align}

Combining this with \eqref{eq:oshima_g} and applying the triangle inequality,
we have
\begin{align}
  &\hspace{-0.5cm} \int_{\mathbb{R}^d} |v(x) - L(v)| g(x) h(x)^2 dx \notag \\
  &\le \int_{\mathbb{R}^d} |v(x) - K(v)| g(x) h(x)^2 dx
    + |K(v) - L(v)| \int_{\mathbb{R}^d} g(x) h(x)^2 dx \notag \\
  &\le C_1 \mathcal{E}^h(v,v)^{1/2}
    + C_1 \|g\|_{L^1(h^2dx)} \mathcal{E}^h(v,v)^{1/2} 
  = C_2 \mathcal{E}^h(v,v)^{1/2}, \notag
\end{align}
where $C_2 = C_1(1 + \|g\|_{L^1(h^2dx)})$. 
Substituting $v = u/h$ in this inequality together with (4.4),
we directly obtain
\begin{align}
\label{eq:31}
  \int_{\mathbb{R}^d} \left| u(x) - h(x)\left( \frac{u}{h} \right) \right| g(x) h(x) dx
  &\le C_2 \left( \mathcal{E}^{\alpha}(u,u)
  - \theta_{c} \int_{\mathbb{R}^{d}}^{ }u^{2} d\mu\right)^{1/2}, \quad u \in \mathcal{F}_{e}^{\alpha}.
\end{align}
This completes the proof.
\end{proof}

Now, we prove the main theorem in this paper.

\begin{theorem}
  \label{thm:main}
  Assume that $\mu$ belongs to $\cS_{\infty}$.
  If $\alpha < d \le 2\alpha$, $C(\theta)$ is differentiable
  with respect to $\theta \in \bR^{d}$. 
\end{theorem}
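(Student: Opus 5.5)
Since $C(\theta)$ is convex, non-negative and continuous (Lemma \ref{lem:spectral_properties}), and vanishes on $(-\infty,\theta_c]$, it is differentiable on $(-\infty,\theta_c)$ with derivative $0$. For $\theta>\theta_c$, $C(\theta)>0$, so $-C(\theta)$ lies strictly below the essential spectrum $[0,\infty)$. This holds because $\mu\in\mathcal{K}_\infty$ is relatively form-compact with respect to $\mathcal{E}^\alpha$, via the compact embedding $\mathcal{F}_e^\alpha\hookrightarrow L^2(\mu)$ used in Lemma \ref{lem:subcritical_H_nu}. Hence $-C(\theta)$ is an isolated simple eigenvalue, and analytic perturbation theory (Kato) gives real analyticity on $(\theta_c,\infty)$. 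By convexity, everything therefore reduces to showing that the right derivative at the critical point vanishes: $\lim_{\theta\downarrow\theta_c} C(\theta)/(\theta-\theta_c)=0$.

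For this step I will argue by contradiction, following \cite{Takeda2003Large,TakedaTsuchida2007}. Let $\theta_n\downarrow\theta_c$, and let $u_n\in\mathcal{F}^\alpha$ be $L^2(dx)$-normalized positive ground states for $\theta_n$. Rescale them as $\tilde u_n=u_n/(\theta_c\int u_n^2d\mu)^{1/2}$, so that $\theta_c\int\tilde u_n^2 d\mu=1$. The eigenvalue identity
\[
\mathcal{E}^\alpha(u_n,u_n)-\theta_c\int u_n^2d\mu+C(\theta_n)=(\theta_n-\theta_c)\int u_n^2d\mu
\]
gives two things. First,
\[
\mathcal{E}^{-\theta_c\mu}(\tilde u_n,\tilde u_n)\le(\theta_n-\theta_c)/\theta_c\to0 .
\]
Second, suppose $C(\theta_n)\ge\delta(\theta_n-\theta_c)$. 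Then $\delta\int u_n^2dx\le\int u_n^2d\mu$, that is, $\int\tilde u_n^2dx\le 1/(\theta_c\delta)$ stays bounded. So the goal is to show that $\int\tilde u_n^2\,dx\to\infty$.

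Since $\mathcal{E}^\alpha(\tilde u_n,\tilde u_n)$ is bounded, Lemma \ref{lem:compact_embedding} and the compact embedding into $L^2(\mu)$ give a subsequence converging weakly in $\mathcal{F}_e^\alpha$, strongly in $L^2(\mu)$ and in $L^2_{\loc}$. The limit satisfies $\theta_c\int v^2d\mu=1$ and $\mathcal{E}^\alpha(v,v)\le1$, so by Lemma \ref{lem:char} and the uniqueness in Theorem \ref{thm:cpt}, $v=h$. Next I apply Proposition \ref{prop:Oshima-ineq} to $\tilde u_n$:
\[
\int|\tilde u_n-hL(\tilde u_n/h)|\,gh\,dx\le C\,\mathcal{E}^{-\theta_c\mu}(\tilde u_n,\tilde u_n)^{1/2}\to0 .
\]
Since $\psi$ has compact support and $\tilde u_n\to h$ in $L^2_{\loc}$, we get $L(\tilde u_n/h)\to\int\psi h^2dx=1$. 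Hence $\tilde u_n\to h$ in $L^1(gh\,dx)$, and along a further subsequence $\tilde u_n\to h$ almost everywhere. Fatou's lemma then yields
\[
\liminf_n\int\tilde u_n^2dx\ge\int h^2dx .
\]
By Lemma \ref{lem:asymp}, $h^2\asymp|x|^{2\alpha-2d}$ at infinity, which is not integrable exactly when $d\le2\alpha$; this is the null criticality of Proposition \ref{prop:null}. Therefore $\int\tilde u_n^2dx\to\infty$, contradicting the bound above. So $C'(\theta_c+)=0=C'(\theta_c-)$.

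The main obstacle is identifying the limit without ultracontractivity: we need the $\tilde u_n$ to converge to $h$ pointwise almost everywhere, not merely weakly. This is exactly why Oshima's inequality (Proposition \ref{prop:Oshima-ineq}), which rests on Harris recurrence (Proposition \ref{prop:Harris}), together with the local compactness (Lemma \ref{lem:compact_embedding}), is needed. I would check carefully that $L(\tilde u_n/h)$ converges, using the boundedness of $\psi h$ and the local $L^2$ convergence, and also verify the form-compactness claim used for isolatedness in the supercritical region.
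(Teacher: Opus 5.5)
Your argument is correct. Its skeleton matches the paper's: eigenfunctions $u_n$ for $\theta_n\downarrow\theta_c$, compactness into $L^2(\mu)$ and $L^2_{\mathrm{loc}}$, identification of the limit with $h$, non-integrability of $h^2$, and the final bound $C(\theta_n)\le(\theta_n-\theta_c)\int u_n^2\,d\mu$. It differs from the paper in two places.

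First, you normalize in $L^2(\mu)$ rather than in $L^2(dx)$. This makes the energy bound immediate, $\mathcal{E}^\alpha(\tilde u_n,\tilde u_n)\le 1+(\theta_n-\theta_c)/\theta_c$, so you avoid the $\varepsilon$--$M(\varepsilon)$ Kato-class estimate the paper uses to bound $\mathcal{E}^\alpha(u_n,u_n)$. It also guarantees a nonzero limit, so you conclude from the blow-up of $\|\tilde u_n\|_2$ rather than by forcing the limit to vanish. Since $\int\tilde u_n^2\,dx=(\theta_c\int u_n^2\,d\mu)^{-1}$, your $\|\tilde u_n\|_2\to\infty$ is exactly the paper's $\int u_n^2\,d\mu\to0$. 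The contradiction hypothesis $C(\theta_n)\ge\delta(\theta_n-\theta_c)$ can therefore be dropped: argue along sub-subsequences directly.

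Second, you identify the limit as $h$ through Lemma \ref{lem:char} and the uniqueness in Theorem \ref{thm:cpt}. The paper instead obtains $u=\gamma h$ from Oshima's inequality (Proposition \ref{prop:Oshima-ineq}). Your closing paragraph therefore overstates the role of Oshima's inequality. Once $\tilde u_n\to h$ in $L^2_{\mathrm{loc}}$, a diagonal subsequence converges to $h$ a.e. on $\mathbb{R}^d$, and Fatou's lemma gives $\liminf_n\int\tilde u_n^2\,dx\ge\int h^2\,dx=\infty$ directly. So in your version Proposition \ref{prop:Oshima-ineq}, and with it the Harris recurrence of $\mathbf{M}^h$, is redundant.

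Your route yields a shorter proof that rests only on the two compact embeddings and the uniqueness of the ground state. The paper's route derives the identification of the limit from the recurrence of $\mathbf{M}^h$ rather than from uniqueness of the minimizer. Your handling of the regions $\theta<\theta_c$ and $\theta>\theta_c$ is fine, and the form-compactness justification for isolating the principal eigenvalue is more explicit than the paper's.
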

\begin{proof}
  For $\theta > \theta_{c}$, we see from the same argument of \cite[Lemma 4.3]{Takeda2003Large}
  that $-C(\theta)$ be the principal eigenvalue
  of $\cH^{\theta \mu} = -\cH - \theta \mu$ with finite multiplicity.
  Let $u_{\theta}$ be the $L^{2}(dx)$-normalized eigenfunction associated with $-C(\theta)$. 
  By the analytic perturbation theory, the $C(\theta)$ is differentiable, and its derivative is given by
  \begin{align}
  \label{eq:20}
    \frac{dC(\theta)}{d\theta} = \int_{\bR^{d}} u_{\theta}^{2} d\mu.
  \end{align}

  Let $\{\theta_{n}\}$ be a real sequence strictly decreasing to $\theta_{c}$,
  and let $u_{n}$ (written simply for $u_{\theta_{n}}$) be the $L^{2}(dx)$-normalized eigenfunction
  corresponding to $-C(\theta_{n})$. Then it holds that 
  \begin{align}
  \label{eq:49}
    -C(\theta_{n}) = \cE^{\alpha}(u_{n}, u_{n}) - \theta_{n} \int_{\bR^{d}} u_{n}^{2} d\mu
    = \mathcal{E}^{-\theta_{n} \mu}(u_{n}, u_{n}).
  \end{align}
  By Theorem \ref{thm:SV}, 
  for any $\varepsilon > 0$ there exists a constant $M(\varepsilon) > 0$ such that
  \begin{align}
  \label{eq:84}
    \int_{\mathbb{R}^{d}}^{ } u(x)^{2} \mu(dx) \le \varepsilon \mathcal{E}^{\alpha}(u,u) +
    M(\varepsilon) \int_{\mathbb{R}^{d}}^{ } u(x)^{2} dx.
  \end{align}
  Thus, we can estimate $\mathcal{E}^{\alpha}(u_{n}, u_{n})$ as follows:
  \begin{align}
  \label{eq:50}
    \cE^{\alpha}(u_{n},u_{n}) 
    &= -C(\theta_{n}) + \theta_{n} \int_{\bR^{d}} u_{n}^{2} d\mu \nonumber \\
    &\le -C(\theta_{n}) + \theta_{n} \varepsilon \cE^{\alpha}(u_{n},u_{n}) + \theta_{n} M(\varepsilon).
  \end{align}
  Taking $\varepsilon > 0$ small enough such that $\theta_{n} \varepsilon < 1$, we have 
  \begin{align}
  \label{eq:51}
    \cE^{\alpha}(u_{n}, u_{n}) \le \frac{-C(\theta_{n}) + \theta_{n} M(\varepsilon)}{1 - \theta_{n}\varepsilon}.
  \end{align}
  Therefore, we see
  \begin{align}
  \label{eq:52}
    \limsup_{n \to \infty} \cE^{\alpha}(u_{n}, u_{n})
    \le \frac{\theta_{c} M(\varepsilon)}{1 - \theta_{c} \varepsilon} < \infty.
  \end{align}
  With this bound, we evaluate the critical energy form for the sequence $u_n$:
  \begin{align}
  \label{eq:53}
    \left| \cE^{-\theta_{c}\mu}(u_{n}, u_{n}) + C(\theta_{n}) \right|
    &= \left| \cE^{-\theta_{c} \mu}(u_{n}, u_{n}) - \cE^{-\theta_{n} \mu}(u_{n},u_{n}) \right| \nonumber \\
    &= (\theta_{n} - \theta_{c}) \int_{\bR^{d}} u_{n}^{2} d\mu \nonumber \\
    &\le (\theta_{n} - \theta_{c}) \left\| G \mu \right\|_{\infty} \cE^{\alpha}(u_{n},u_{n}).
  \end{align}
  Since $\theta_n \downarrow \theta_c$ and the sequence $\cE^\alpha(u_n, u_n)$ is bounded,
  the right-hand side tends to $0$. Noting that $C(\theta_{n}) \to C(\theta_c) = 0$, we establish that
  \begin{align}
  \label{eq:54}
    \lim_{n \to \infty} \cE^{-\theta_{c}\mu}(u_{n},u_{n}) = 0.
  \end{align}

  By \eqref{eq:52}, the sequence $\{u_{n}\}$ is
  $\mathcal{E}^{\alpha}$-bounded in the extended Dirichlet space $\cF_{e}^{\alpha}$. 
  By Lemma \ref{lem:compact_embedding},
  the space $\cF_{e}^{\alpha}$ is compactly embedded into $L^{2}_{\text{loc}}(dx)$. 
  Furthermore, under $\mu \in \cK_{\infty}$,
  the embedding from $\cF_{e}^{\alpha}$ into $L^{2}(\mu)$ is also compact by 
  \cite[Theorem 4.9]{Takeda2019}.  
  Thus, we can extract a subsequence (still denoted by $\{u_{n}\}$)
  that converges weakly in $\cF_{e}^{\alpha}$ and strongly
  in both $L^{2}(\mu)$ and $L^{2}_{\text{loc}}(dx)$ to some function
  $u \in \cF_{e}^{\alpha}$. 
  By the lower semicontinuity and the compactness of
  the embedding $\cF_{e}^{\alpha} \hookrightarrow L^{2}(\mu)$, 
  we have $\cE^{-\theta_{c}\mu}(u, u) \le \liminf_{n \to \infty}
  \cE^{-\theta_{c}\mu}(u_{n}, u_{n}) = 0$.
  Since the critical form is non-negative, this forces $\cE^{-\theta_{c}\mu}(u, u) = 0$.

  Next, we apply Proposition \ref{prop:Oshima-ineq} to identify the limit $u$. 
  Let $\psi \in C_{0}^{\infty}(\bR^{d})$ be the smooth cutoff function
  with compact support defined in the proposition. Substituting $u_n$ into \eqref{eq:21} yields
  \begin{align}
  \label{eq:oshima_un}
    \int_{\bR^{d}} \left| u_{n}(x) - h(x) L(u_{n}/h) \right| g(x) h(x) dx
    \le C \cE^{-\theta_{c}\mu}(u_{n}, u_{n})^{1/2}.
  \end{align}
  By \eqref{eq:54}, the right-hand side vanishes as $n \to \infty$. 
  Recall that $L(u_{n}/h) = \int_{\bR^{d}} u_{n}(x) \psi(x) h(x) dx$. 
  Since $\psi$ has compact support and $u_{n}$ converges to
  $u$ strongly in $L^{2}_{\text{loc}}(dx)$,
  we can find that 
  \begin{align*}
    \lim_{n \to \infty} L(u_{n}/h) = \int_{\bR^{d}} u(x) \psi(x) h(x) dx =: \gamma.
  \end{align*}
  Since $u_n \to u$ almost everywhere along a further subsequence,
  applying Fatou's lemma to the left-hand side of \eqref{eq:oshima_un} gives
  \begin{align*}
    \int_{\bR^{d}} \left| u(x) - \gamma h(x) \right| g(x) h(x) dx 
    &\le \liminf_{n \to \infty} \int_{\bR^{d}} \left| u_{n}(x) - h(x) L(u_{n}/h) \right|
      g(x) h(x) dx \\
    &\le \lim_{n \to \infty} C \mathcal{E}^{-\theta_{c} \mu}(u_{n},u_{n})^{1/2} = 0.
  \end{align*}
  Since $g(x)$ and $h(x)$ are strictly positive almost everywhere,
  this implies $u(x) = \gamma h(x)$ a.e. 
  Recall that the eigenfunctions $\{u_{n}\}$ are
  $L^{2}(dx)$-normalized ($\int_{\bR^{d}} u_{n}^{2} dx = 1$). 
  Fatou's lemma yields 
  \begin{align*}
    \gamma^{2} \int_{\bR^{d}} h^{2} dx \le \liminf_{n \to \infty} \int_{\bR^{d}} u_{n}^{2} dx = 1.
  \end{align*}
  If $\gamma \not= 0$, this implies $h \in L^{2}(dx)$, which contradicts the null
  criticality of $\cH^{\theta_{c} \mu}$. Thus, we must have $c = 0$, and
  consequently the limit function $u$ satisfies $u = 0$ $m$-a.e. 

  Since $u \in \mathcal{F}_e^\alpha$ is quasi-continuous,
  the equality $u = 0$ $m$-a.e. implies $u = 0$ q.e.
  by \cite[Theorem 4.2.2, Lemma 4.1.5]{FOT}. 
  Furthermore, since the smooth measure $\mu \in \mathcal{K}_{\infty}$
  charges no set of capacity zero,
  we obtain $u = 0$ $\mu$-a.e. 
  Recall that the subsequence $\{u_n\}$ was extracted so that it converges strongly to $u$ in $L^2(\mu)$
  the compactness of the embedding $\cF_{e}^{\alpha} \hookrightarrow L^{2}(\mu)$. 
Since the limit function $u$ is $0$ in $L^2(\mu)$,
the strong convergence directly yields
\begin{align}
\label{eq:u_n_L2_zero}
  \lim_{n\to\infty} \int_{\mathbb{R}^d} u_{n}^{2} d\mu = \int_{\mathbb{R}^{d}}^{ }u^{2} d\mu = 0.
\end{align}

Finally, we return to \eqref{eq:53}.
Neglecting $\mathcal{E}^{-\theta_{c} \mu}(u_{n},u_{n}) \ge 0$, \eqref{eq:53} implies
\begin{align}
\label{eq:17}
  C(\theta_{n}) &\le \mathcal{E}^{-\theta_{c} \mu}(u_{n},u_{n}) + C(\theta_{n})
                  \le (\theta_{n} - \theta_{c}) \int_{\mathbb{R}^{d}}^{ } u_{n}^{2} d\mu.
\end{align}
Dividing both sides by the strictly positive quantity $(\theta_{n} - \theta_{c})$, we observe that
  \begin{align}
  \label{eq:derivative_bound}
    0 \le \frac{C(\theta_{n}) - C(\theta_{c})}{\theta_{n} - \theta_{c}}
    = \frac{C(\theta_{n})}{\theta_{n} - \theta_{c}} \le \int_{\bR^{d}} u_{n}^{2} d\mu.
  \end{align}
  Taking the limit as $n \to \infty$ and using \eqref{eq:u_n_L2_zero}, we obtain
  \begin{align*}
    \lim_{n \to \infty} \frac{C(\theta_{n}) - C(\theta_{c})}{\theta_{n} - \theta_{c}} = 0.
  \end{align*}
  Since the sequence $\{\theta_{n}\}$ decreasing to $\theta_{c}$ was arbitrary,
  we conclude that the principal eigenvalue curve $C(\theta)$ is right-differentiable
  at $\theta_{c}$ with its derivative being exactly zero. 
  Therefore the proof is completed. 
\end{proof}

\section{Large deviation principle}
In this section, we establish the large deviation principle for
the positive continuous additive functional corresponding to $\mu$.
First we identify the logarithmic moment generating function
$\Lambda(\theta)$ of the additive functional $A_{t}^{\mu}$ for the GSP.
To do this, we use the $L^{p}$-independence of the spectral radius
for the Feynman-Kac semigroup $\{P_{t}^{\theta_{c} \mu}\}$.

The generator of the GSP is defined by 
\begin{align*}
  \mathcal{H}^{\alpha} = -\log(1 + (-\Delta)^{\alpha/2}), \quad \alpha \in (0, 2],
\end{align*}
and the associated Dirichlet form is denoted by $(\mathcal{E}^{\alpha}, \mathcal{F}^{\alpha})$. 
For a measure $\mu \in \mathcal{K}_{\infty}$ and $\theta \in \mathbb{R}$, we consider the Feynman-Kac semigroup $\{P_{t}^{\theta\mu}\}_{t \ge 0}$. 
We first define the logarithmic moment generating function $\Lambda(\theta)$ by
\begin{align*}
  \Lambda(\theta) := \lim_{t \to \infty} \frac{1}{t} \log \mathbb{E}_{x} \left[ \exp(\theta A_{t}^{\mu}) \right],
\end{align*}
provided that the limit exists. 

\begin{definition}
  For $p \in [1, \infty]$, we define the $L^{p}$-spectral radius $\lambda_{p}(\theta\mu)$ as
  \begin{align*}
    \lambda_{p}(\theta\mu) := - \lim_{t \to \infty} \frac{1}{t} \log \|P_{t}^{\theta\mu}\|_{p,p}.
  \end{align*}
\end{definition}

\begin{lemma}
  The $L^{2}$-spectral radius $\lambda_{2}(\theta\mu)$ is characterized by the following variational formula:
  \begin{align*}
    \lambda_{2}(\theta\mu) = \inf \left\{ \mathcal{E}^{\alpha}(u,u) - \theta \int_{\mathbb{R}^{d}} u^{2} d\mu : u \in \mathcal{F}^{\alpha}, \int_{\mathbb{R}^{d}} u^{2} dx = 1 \right\}.
  \end{align*}
  Hence it follows that $C(\theta) := -\lambda_{2}(\theta\mu)$.
\end{lemma}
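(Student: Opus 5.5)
The plan is to identify the Feynman--Kac semigroup $\{P_t^{\theta\mu}\}$ on $L^2(dx)$ with the semigroup generated by the self-adjoint operator associated with the perturbed closed form $(\mathcal{E}^{-\theta\mu},\mathcal{F}^{\alpha})$. The variational formula then follows from the spectral theorem. The steps are as follows.

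\emph{Step 1: the perturbed form is closed.} By the Stollmann--Voigt type bound \eqref{eq:SV-1}, for every $\varepsilon>0$ we have $\theta\int u^2\,d\mu \le \varepsilon|\theta|\,\mathcal{E}^{\alpha}(u,u)+|\theta| M(\varepsilon)\|u\|_2^2$. Choosing $\varepsilon|\theta|<1$ shows that $\theta\mu$ is form-bounded with relative bound $0$ with respect to $\mathcal{E}^{\alpha}$. By the KLMN theorem, $\mathcal{E}^{-\theta\mu}(u,u)=\mathcal{E}^{\alpha}(u,u)-\theta\int u^2\,d\mu$ is therefore a closed, lower semibounded form on $\mathcal{F}^{\alpha}$. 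Let $H^{\theta\mu}$ be the associated self-adjoint operator. For $\theta<0$ the perturbation is positive and the form is closed trivially on $\mathcal{F}^{\alpha}\cap L^2(|\theta|\mu)=\mathcal{F}^{\alpha}$.

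\emph{Step 2: the Feynman--Kac semigroup is $e^{-tH^{\theta\mu}}$.} For a smooth measure of Kato class, the standard result in \cite{FOT} (Theorem 6.1.2 together with the Kato class estimate) gives $P_t^{\theta\mu}f=e^{-tH^{\theta\mu}}f$ for $f\in L^2(dx)$. Here $P_t^{\theta\mu}f(x)=\bE_x[e^{\theta A_t^\mu}f(X_t)]$ is a strongly continuous symmetric semigroup on $L^2$. I expect this identification to be the main obstacle, because the paper stresses the lack of ultracontractivity. However, it requires only $\mu\in\mathcal{K}$, via Khas'minskii's lemma, which gives $\sup_x\bE_x[e^{|\theta|A_t^\mu}]\le Ce^{ct}$. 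It does not need any regularity of the transition density, so I would cite it rather than reprove it.

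\emph{Step 3: spectral theorem.} Since $e^{-tH^{\theta\mu}}$ is self-adjoint, $\|P_t^{\theta\mu}\|_{2,2}=e^{-t\inf\sigma(H^{\theta\mu})}$ exactly. Hence $\lambda_2(\theta\mu)=\inf\sigma(H^{\theta\mu})$, and the limit defining $\lambda_2$ exists trivially. By the min--max (Rayleigh quotient) characterization of the bottom of the spectrum for a closed form with form domain $\mathcal{F}^{\alpha}$, we get $\inf\sigma(H^{\theta\mu})=\inf\{\mathcal{E}^{-\theta\mu}(u,u): u\in\mathcal{F}^{\alpha},\|u\|_2=1\}$. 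This is the claimed formula.

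\emph{Step 4: conclusion.} Comparing this formula with Definition \ref{def:sf} gives $C(\theta)=-\lambda_2(\theta\mu)$ immediately. The core $C_0^\infty$ is not needed, since the infimum is taken over the full form domain, which coincides with that in \eqref{eq:15}.
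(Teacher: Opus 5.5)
Your proposal is correct and follows the same route as the paper. You identify $P_t^{\theta\mu}$ with the semigroup of the perturbed form $\mathcal{E}^{\alpha}(u,u)-\theta\int u^2\,d\mu$ on $\mathcal{F}^{\alpha}$ and read off $\lambda_2(\theta\mu)$ as the bottom of its spectrum, and you spell out the two steps the paper leaves implicit: closedness via KLMN and \eqref{eq:SV-1}, and $\|e^{-tH^{\theta\mu}}\|_{2,2}=e^{-t\inf\sigma(H^{\theta\mu})}$. One small citation point: the perturbation theorem you cite from \cite{FOT} treats only killing by a positive measure, so it covers $\theta\le 0$; for $\theta>0$ you need the standard signed-measure Feynman--Kac identification for Kato-class $\mu$, which the paper also takes for granted.
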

\begin{proof}
  Since the generator of the Feynman-Kac semigroup is $\mathcal{H}^{\alpha} + \theta\mu$,
  its $L^{2}$-spectral bound is the negative of the bottom of the spectrum of
  the associated quadratic form $\mathcal{E}^{\alpha}(u,u) - \theta \int u^{2} d\mu$.
  By the definition of $\lambda_{2}(\theta\mu)$, it coincides with this infimum.
\end{proof}

\begin{lemma}
  \label{lem:p-ind-1}
  For any $\theta \in \mathbb{R}$, the $L^{p}$-spectral radii $\lambda_{p}(\theta\mu)$ are independent of $p \in [1, \infty]$. That is, we have
  \begin{align*}
    -\lambda_{\infty}(\theta\mu) = -\lambda_{2}(\theta\mu) = C(\theta).
  \end{align*}
\end{lemma}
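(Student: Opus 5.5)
We will deduce the statement from Chen's $L^p$-independence theorem \cite{chen2012lp}, rather than from any ultracontractivity or heat kernel regularity. That theorem treats symmetric Markov processes that are irreducible, strong Feller and transient, with perturbing measures in $\mathcal{K}_\infty$ (or at least Green-tight, Kato class). It states that $\lambda_p(\theta\mu)$ is independent of $p\in[1,\infty]$ provided the Green function satisfies a mild condition; roughly, $\sup_x G\nu(x)<\infty$ for some $\nu\in\mathcal{K}_\infty$ comparable to a strictly positive function, together with a 3G-type control. The plan is therefore to verify each hypothesis for $\bfG^{\alpha}$ and $\mu$, and then identify $\lambda_2$ with $-C(\theta)$ using the preceding lemma.

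\textbf{Step 1: structural hypotheses.} Irreducibility and the strong Feller property of $\bfG^{\alpha}$ are available from \cite{tsuchida2026GSPSF}, as used in Theorem \ref{thm:cpt}. Transience holds since $d>\alpha$. The Green-tightness hypothesis $\mu\in\mathcal{K}_\infty=\mathcal{S}_\infty$ follows from Proposition \ref{prop:KS}, and the global 3G inequality is Lemma \ref{lem:3G}. The condition in \cite{chen2012lp} requiring a strictly positive continuous $g$ with $g\,dx\in\mathcal{K}_\infty$ is supplied by \cite[Lemma 2.4]{Takeda2014Variational}, as in the proof of Theorem \ref{thm:cpt}.

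\textbf{Step 2: the two regimes of $\theta$.} When $\theta\le\theta_c$, we have $C(\theta)=0$ and must show $\lambda_\infty(\theta\mu)=0$. The lower bound $\lambda_\infty\le\lambda_2=0$ is standard: by Riesz--Thorin interpolation and symmetry, $\lambda_2\ge\min(\lambda_1,\lambda_\infty)=\lambda_\infty$. For the upper bound we need $\|P_t^{\theta\mu}\|_{\infty,\infty}=\sup_x\bE_x[e^{\theta A_t^\mu}]$ to grow subexponentially. For $\theta<\theta_c$ this follows from gaugeability, via Theorem \ref{thm:subcritical_test} and Chen's gauge theorem, which give $\sup_x\bE_x[e^{\theta A_\infty^\mu}]<\infty$. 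The case $\theta=\theta_c$ is obtained by monotonicity and convexity in $\theta$. When $\theta>\theta_c$, the ground state $u_\theta$ of the proof of Theorem \ref{thm:main} is an $L^2$ eigenfunction. Using the resolvent representation and the Green function bounds (Theorem \ref{thm:Green}, Lemma \ref{lem:3G}), I would show that $u_\theta$ is bounded and strictly positive, and that $e^{C(\theta)t}u_\theta=P_t^{\theta\mu}u_\theta$. A lower bound $u_\theta\ge c\,G(\cdot,0)\wedge 1$ then gives $\sup_x\bE_x[e^{\theta A_t^\mu}]\ge c' e^{C(\theta)t}$. The matching upper bound comes from Chen's comparison $\bE_x[e^{\theta A_t^\mu}]\le c\,e^{C(\theta)t}$, which uses the 3G inequality.

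\textbf{Main obstacle.} The hardest step is the upper bound $\|P_t^{\theta\mu}\|_{\infty,\infty}\lesssim e^{(C(\theta)+\varepsilon)t}$ without ultracontractivity, since we cannot pass from $L^2$ to $L^\infty$ by $\|P_t\|_{2\to\infty}<\infty$. My first attempt is to apply \cite{chen2012lp} directly, arguing that its proof uses only the strong Feller property, irreducibility, Green-tightness and the 3G inequality, and not heat kernel bounds; checking this is the crux. If that fails, the fallback is the approach of Takeda via the $h$-transform. Using the bounded positive eigenfunction $u_\theta$ (or $h$ at $\theta_c$, with the lower bound of Lemma \ref{lem:asymp}), one transforms $P_t^{\theta\mu}$ into a conservative semigroup. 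The exponential rate is then read off from $P_t^{\theta\mu}1\le (\inf u_\theta)^{-1}P_t^{\theta\mu}u_\theta$ on compact sets, with the contribution from infinity handled by the $\mathcal{K}_\infty$ tail estimate \eqref{eq:6}.
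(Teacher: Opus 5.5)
\textbf{There is a gap: the hypothesis of Chen's theorem is never identified, and the case $\theta>\theta_c$ is left unproved.}

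The paper proves this lemma in one line from \cite[Theorem 2.4 (i)]{chen2012lp}. The hypothesis it verifies is not a Green-function or 3G condition. It is the sign condition $\lambda_2(\theta\mu)\le 0$, together with $\mu\in\mathcal{K}_\infty$. That sign condition is just $C(\theta)\ge 0$, which holds for every $\theta$ by Lemma \ref{lem:spectral_properties}.

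Your proposal never names this condition. You describe Chen's theorem as giving $p$-independence from structural properties alone (irreducibility, strong Feller, Green-tightness, a 3G bound). That cannot be right in general: for a conservative process with $\lambda_2(0)>0$ (e.g.\ Brownian motion on hyperbolic space) one has $\lambda_\infty(0)=0\neq\lambda_2(0)$. So your citation is missing its operative hypothesis. You place the crux in re-auditing Chen's proof, when what is needed is this elementary sign check.

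\textbf{Why the regime-by-regime argument stalls.} For $\theta\le\theta_c$, your gauge argument plus convexity and continuity of the finite convex function $\theta\mapsto-\lambda_\infty(\theta\mu)$ is fine. For $\theta>\theta_c$, the needed upper bound $\sup_x\mathbb{E}_x[e^{\theta A^\mu_t}]\le c\,e^{(C(\theta)+\varepsilon)t}$ is attributed to an unspecified ``comparison''. That comparison is essentially the statement being proved.

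The fallback also fails, for three reasons:
\begin{enumerate}
\item Since $u_\theta\in L^2$, you have $\inf_{\mathbb{R}^d}u_\theta=0$.
\item The bound $1\le u_\theta/\inf_K u_\theta$ on a compact $K$ only controls $\mathbb{E}_x[e^{\theta A^\mu_t};X_t\in K]$.
\item The tail condition \eqref{eq:6} controls $\mu$-potentials, not the Feynman--Kac weight of paths ending outside $K$.
\end{enumerate}

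\textbf{The missing idea.} Your subcritical gauge argument works for every $\theta$ once you kill at rate $\beta=C(\theta)+\varepsilon$:
\begin{enumerate}
\item $\beta>0$ precisely because $C(\theta)\ge 0$.
\item $\theta\mu$ stays Green-tight for the $\beta$-subprocess, since $G_\beta\le G$.
\item $\inf\{\mathcal{E}^\alpha_\beta(u,u)-\theta\int u^2\,d\mu:\|u\|_2=1\}=\beta-C(\theta)=\varepsilon>0$.
\end{enumerate}
Chen's gauge theorem then gives $\sup_{t,x}\mathbb{E}_x[e^{\theta A^\mu_t-\beta t}]<\infty$, i.e.\ $-\lambda_\infty(\theta\mu)\le C(\theta)+\varepsilon$. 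Combined with your interpolation bound $\lambda_\infty\le\lambda_2$, this gives $\lambda_\infty(\theta\mu)=\lambda_2(\theta\mu)$ for all $\theta$. This is exactly what \cite[Theorem 2.4 (i)]{chen2012lp} packages under the hypothesis $\lambda_2\le 0$. No ground state $u_\theta$, boundedness of eigenfunctions, or 3G inequality is needed.
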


\begin{proof}
  By Lemma \ref{lem:spectral_properties}, we have already known that
  $C(\theta) \ge 0$ for all $\theta \in \bR$, which is equivalent to
  $\lambda_{2}(\theta \mu) \le 0$. Under this condition and $\mu \in \cK_{\infty}$,
  the $p$-independence of $\lambda_{p}(\theta \mu)$ holds by \cite[Theorem 2.4 (i)]{chen2012lp}.
\end{proof}

\begin{lemma}
  \label{lem:p-ind-2}
  For any $x \in \mathbb{R}^{d}$ and $\theta \in \mathbb{R}$, the following inequality holds:
  \begin{align*}
    \limsup_{t \to \infty} \frac{1}{t} \log \mathbb{E}_{x} \left[ \exp(\theta A_{t}^{\mu}) \right] \le -\lambda_{\infty}(\theta\mu).
  \end{align*}
\end{lemma}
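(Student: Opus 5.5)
The bound follows directly from the definition of $\lambda_{\infty}(\theta\mu)$ once we express the Feynman--Kac expectation as the semigroup applied to the constant function $1$. For every $t>0$ and every $x\in\bR^{d}$,
\begin{align*}
  \bE_{x}\left[\exp(\theta A_{t}^{\mu})\right] = P_{t}^{\theta\mu}1(x).
\end{align*}
Since $\|1\|_{\infty}=1$, one expects the pointwise bound $P_{t}^{\theta\mu}1(x)\le \|P_{t}^{\theta\mu}\|_{\infty,\infty}$. Taking logarithms, dividing by $t$ and passing to the $\limsup$ then gives
\begin{align*}
  \limsup_{t\to\infty}\frac{1}{t}\log \bE_{x}\left[\exp(\theta A_{t}^{\mu})\right]
  \le \lim_{t\to\infty}\frac{1}{t}\log\|P_{t}^{\theta\mu}\|_{\infty,\infty} = -\lambda_{\infty}(\theta\mu).
\end{align*}
The limit defining $\lambda_{\infty}(\theta\mu)$ exists by subadditivity of $t\mapsto\log\|P_{t}^{\theta\mu}\|_{\infty,\infty}$, which follows from the semigroup property. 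Its finiteness, and its identification with $-C(\theta)$, is exactly Lemma \ref{lem:p-ind-1}.

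The one genuine subtlety is that the $L^{\infty}$ operator norm only controls $P_{t}^{\theta\mu}1$ almost everywhere, whereas the statement is claimed for every $x$. I would handle this by first showing that $x\mapsto P_{t}^{\theta\mu}1(x)$ is finite and continuous, or at least that $\sup_{x}$ equals the essential supremum. Finiteness for all $t$ comes from $\mu\in\cK_{\infty}\subset\cK$ (Lemma \ref{lem:relation-K}), via Khasminskii's lemma:
\begin{align*}
  \sup_{x}\bE_{x}\left[e^{|\theta| A_{t}^{\mu}}\right]\le C e^{\beta t}.
\end{align*}
To pass from a.e.\ to every $x$, I would use the strong Feller property and absolute continuity of $\bfG^{\alpha}$ from Section 2. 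Writing $s\in(0,t)$ and using the Markov property,
\begin{align*}
  P_{t}^{\theta\mu}1(x)=\bE_{x}\left[e^{\theta A_{s}^{\mu}}\,P_{t-s}^{\theta\mu}1(X_{s})\right].
\end{align*}
Apply H\"older's inequality with exponents $p,q$ close to $1$ and $\infty$ respectively. The factor $P_{t-s}^{\theta\mu}1(X_{s})$ is bounded by $\|P_{t-s}^{\theta\mu}1\|_{\infty}$ almost surely, because $X_{s}$ has a density and null sets are not charged. The factor $\bE_{x}[e^{p\theta A_{s}^{\mu}}]^{1/p}$ is bounded uniformly in $x$ by Khasminskii's lemma.

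The result is
\begin{align*}
  \sup_{x}P_{t}^{\theta\mu}1(x)\le C_{s}\,\|P_{t-s}^{\theta\mu}\|_{\infty,\infty},
\end{align*}
with $C_{s}$ independent of $t$. Fixing $s$ (say $s=1$) and letting $t\to\infty$, the constant $C_{s}$ disappears after dividing the logarithm by $t$, which gives the claim for every $x$. The case $\theta<0$ is trivial since the left side is then $\le0$ while $-\lambda_{\infty}(\theta\mu)=C(\theta)\ge0$, so the main work is the everywhere-versus-a.e.\ step above for $\theta>0$.
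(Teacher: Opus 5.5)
Your proposal is correct and follows the paper's proof, which consists only of the identity $\bE_x[\exp(\theta A_t^\mu)] = P_t^{\theta\mu}1(x)$, the bound by $\|P_t^{\theta\mu}\|_{\infty,\infty}$, and the definition of $\lambda_\infty(\theta\mu)$. Your extra step, passing from an essential-supremum bound to a bound at every $x$, is a valid refinement that the paper does not address: it uses the Markov property at a fixed time $s$, absolute continuity of the law of $X_s$, and Khasminskii's uniform bound on $\sup_y \bE_y[e^{\theta A_s^\mu}]$ (H\"older is not actually needed, since $P_{t-s}^{\theta\mu}1(X_s) \le \|P_{t-s}^{\theta\mu}\|_{\infty,\infty}$ already holds $\bP_x$-a.s.).
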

\begin{proof}
  Using $\mathbb{E}_{x} [ \exp(\theta A_{t}^{\mu}) ] = P_{t}^{\theta\mu} 1(x)$,
  the expectation is bounded by the operator norm $\|P_{t}^{\theta\mu}\|_{\infty,\infty}$.
  The assertion follows directly from the definition of $\lambda_{\infty}(\theta\mu)$.
\end{proof}

\begin{lemma}\label{lem:C_BR_limit}
  Let $B(R)$ be the open ball of radius $R$ centered at the origin. 
  Let $C^{B(R)}(\theta)$ be the spectral function restricted to the open ball $B(R)$,
  that is,
  \begin{align}
  \label{eq:spec-2}
    C^{B(R)}(\theta) := -\inf \left\{ \mathcal{E}^{\alpha}(u,u) - \theta \int_{B(R)}^{ } u^{2} d\mu :
    u \in \mathcal{F}^{B(R)},\ \int_{B(R)}^{ } u^{2} dx = 1\right\},
  \end{align}
  where
  \begin{align}
  \label{eq:85}
    \mathcal{F}^{B(R)} := \left\{ u \in \mathcal{F}^{\alpha}: u = 0\ \text{$m$-a.e. on } B(R)^{c} \right\}.
  \end{align}
  Then, we have
\begin{align}
  \lim_{R\to\infty} C^{B(R)}(\theta) = C(\theta). 
\end{align}
\end{lemma}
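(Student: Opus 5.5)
Since $\mathcal{F}^{B(R)}\subset\mathcal{F}^{B(R')}\subset\mathcal{F}^{\alpha}$ for $R<R'$, the variational formula \eqref{eq:spec-2} shows that $R\mapsto C^{B(R)}(\theta)$ is nondecreasing and bounded above by $C(\theta)$. Here we use that for $u\in\mathcal{F}^{B(R)}$ the integrals over $B(R)$ coincide with those over $\bR^{d}$ up to the set $\partial B(R)$. That set has Lebesgue measure zero. We also need that $u$ vanishes q.e.\ on the fine exterior, so that the $\mu$-integral is unaffected; if this is delicate, we may instead use balls in the definition of $\mathcal{F}^{B(R)}$ with slightly larger radii, which does not affect the monotone limit. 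Hence $\lim_{R\to\infty}C^{B(R)}(\theta)$ exists and is $\le C(\theta)$. It remains to prove the reverse inequality $\liminf_{R}C^{B(R)}(\theta)\ge C(\theta)$.

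For the lower bound we use an approximation argument. Fix $\varepsilon>0$ and choose $u\in\mathcal{F}^{\alpha}$ with $\|u\|_{2}=1$ and
\[
\theta\int u^{2}d\mu-\mathcal{E}^{\alpha}(u,u)>C(\theta)-\varepsilon .
\]
Since $C_{0}^{\infty}(\bR^{d})$ is a core of $(\mathcal{E}^{\alpha},\mathcal{F}^{\alpha})$, there are $\varphi_{k}\in C_{0}^{\infty}$ with $\varphi_{k}\to u$ in $\mathcal{E}_{1}^{\alpha}$-norm. By \eqref{eq:SV-1} (Theorem~\ref{thm:SV} with $\mu\in\mathcal{K}$), $\mathcal{E}_{1}$-convergence implies convergence in $L^{2}(\mu)$, so the functional $u\mapsto\theta\int u^{2}d\mu-\mathcal{E}^{\alpha}(u,u)$ is $\mathcal{E}_{1}$-continuous, and so is the $L^{2}(dx)$ norm. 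Hence for large $k$ the normalized function $\varphi_{k}/\|\varphi_{k}\|_{2}$ is an admissible competitor whose value is $>C(\theta)-2\varepsilon$. Since $\varphi_{k}$ has compact support, it lies in $\mathcal{F}^{B(R)}$ for all $R$ exceeding the radius of its support. For such $R$ this gives $C^{B(R)}(\theta)\ge C(\theta)-2\varepsilon$, and letting $\varepsilon\to0$ yields the claim.

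The main, if mild, obstacle is the continuity of the $\mu$-term along the approximation. This is handled by \eqref{eq:SV-1}: for $\varphi,u\in\mathcal{F}^{\alpha}$,
\[
\Bigl|\|\varphi\|_{L^{2}(\mu)}-\|u\|_{L^{2}(\mu)}\Bigr|\le\|\varphi-u\|_{L^{2}(\mu)}\le\bigl(\mathcal{E}^{\alpha}(\varphi-u,\varphi-u)+M(1)\|\varphi-u\|_{2}^{2}\bigr)^{1/2}.
\]
One must also use the quasi-continuous versions, so that the $\mu$-integral is well defined; this is legitimate because $\mu$ charges no set of zero capacity. No compactness or ground-state information is needed, and the argument works for every $\theta\in\bR$, including $\theta\le0$ (where the $\mu$-term has the favorable sign).
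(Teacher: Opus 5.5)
Your proposal is correct and follows essentially the same route as the paper. The upper bound $C^{B(R)}(\theta)\le C(\theta)$ comes from the inclusion $\mathcal{F}^{B(R)}\subset\mathcal{F}^{\alpha}$. The lower bound comes from approximating a near-optimizer by a $C_0^\infty$ function from the core, using the $L^2(\mu)$-continuity given by the Kato-class inequality, and observing that its compact support places it in $\mathcal{F}^{B(R)}$ for large $R$.

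Your extra caution about $\partial B(R)$ is more than the paper provides, but it is not needed. For $\theta\ge0$ one only uses $\int_{B(R)}u^2\,d\mu\le\int u^2\,d\mu$. For $\theta\le0$ both $C^{B(R)}(\theta)\le 0$ and $C(\theta)=0$ hold, so the upper bound needs no boundary argument, and the two-sided bounds give the limit without monotonicity in $R$.
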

\begin{proof}
  Since any function $u \in \mathcal{F}^{B(R)}$ can be extended by zero outside $B(R)$
  to become an element of $\mathcal{F}^{\alpha}$,
  we have the natural inclusion $\mathcal{F}^{B(R)} \subset \mathcal{F}^{\alpha}$. 
  This inclusion immediately yields the monotonicity
  $C^{B(R)}(\theta) \le C^{B(R')}(\theta) \le C(\theta)$ for $R < R'$, which provides the upper bound:
  \begin{align}
    \label{eq:C_upper}
    \lim_{R\to\infty} C^{B(R)}(\theta) \le C(\theta).
  \end{align}

  To prove the reverse inequality, fix an arbitrary $\varepsilon > 0$.
  By the definition of $C(\theta)$,
  there exists a function $u \in \mathcal{F}^{\alpha}$ with $\|u\|_{L^2(\mathbb{R}^d)} = 1$ such that
  \begin{align}
    \label{eq:C_eps}
  \theta \int_{\mathbb{R}^d} u^2 d\mu - \mathcal{E}^{\alpha}(u, u) > C(\theta) - \varepsilon.
  \end{align}
  Since $C_0^\infty(\mathbb{R}^d)$ is a core for the Dirichlet space $\mathcal{F}^{\alpha}$,
  it is dense in $\mathcal{F}^{\alpha}$ with respect to the $\mathcal{E}_{1}^{\alpha}$-norm. 
  Furthermore, because $\mu \in \mathcal{K}_{\infty}$,
  the embedding from $\mathcal{F}^{\alpha}$ to $L^2(d\mu)$ is continuous,
  which guarantees that the functional
  $f \mapsto \theta \int_{\mathbb{R}^d} f^2 d\mu - \mathcal{E}^{\alpha}(f, f)$ is continuous with respect to
  the $\mathcal{E}_{1}^{\alpha}$-norm. 
  Thus, we can choose a test function $v \in C_0^\infty(\mathbb{R}^d)$ close enough to $u$
  such that the $L^{2}$-normalized function $w = v / \|v\|_{L^2(\mathbb{R}^d)} \in C_0^\infty(\mathbb{R}^d)$
  satisfies $\|w\|_{L^2(\mathbb{R}^d)} = 1$ and 
  \begin{align}\label{eq:C_w}
    \theta \int_{\mathbb{R}^d} w^2 d\mu - \mathcal{E}^{\alpha}(w, w) > C(\theta) - 2\varepsilon.
  \end{align}
  Since $w$ has a compact support, there exists $R_0 > 0$ such that $\supp(w) \subset B(R_0)$. 
  For any $R \ge R_0$, $w$ vanishes outside $B(R)$, meaning $w \in \mathcal{F}^{B(R)}$
  and $\int_{B(R)} w^2 dx = \int_{\mathbb{R}^d} w^2 dx = 1$. 
  Substituting $w$ as a test function into \eqref{eq:spec-2}, we obtain for all $R \ge R_0$,
\begin{align}
  C^{B(R)}(\theta) &\ge \theta \int_{B(R)} w^2 d\mu - \mathcal{E}^{\alpha}(w, w) \notag \\
  &= \theta \int_{\mathbb{R}^d} w^2 d\mu - \mathcal{E}^{\alpha}(w, w) \notag \\
  &> C(\theta) - 2\varepsilon.
\end{align}
Taking the limit as $R \to \infty$ yields
\begin{align}
  \lim_{R\to\infty} C^{B(R)}(\theta) \ge C(\theta) - 2\varepsilon.
\end{align}
Since $\varepsilon > 0$ is arbitrary, we conclude $\lim_{R\to\infty} C^{B(R)}(\theta) \ge C(\theta)$. 
Combining this with \eqref{eq:C_upper}, the proof is complete.
\end{proof}

\begin{theorem}
  \label{thm:LMGFSP}
  For any $x \in \mathbb{R}^{d}$ and $\theta \in \mathbb{R}$,
  the logarithmic moment generating function $\Lambda(\theta)$ exists
  and coincides with the spectral function $C(\theta)$:
  \begin{align*}
    \Lambda(\theta) = C(\theta).
  \end{align*}
\end{theorem}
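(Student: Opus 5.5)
The plan is to sandwich $\frac{1}{t}\log \bE_x[\exp(\theta A_t^\mu)]$ between $C(\theta)$ from above and below. The upper bound is already in place. By Lemma \ref{lem:p-ind-2} and Lemma \ref{lem:p-ind-1}, for every $x$ and $\theta$,
\begin{align*}
  \limsup_{t\to\infty}\frac1t\log \bE_x[\exp(\theta A_t^\mu)] \le -\lambda_\infty(\theta\mu) = C(\theta).
\end{align*}
So the work is the lower bound
\begin{align*}
  \liminf_{t\to\infty}\frac1t\log \bE_x[\exp(\theta A_t^\mu)] \ge C(\theta)
\end{align*}
for \emph{every} starting point $x$, not merely almost every $x$. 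For $\theta\le 0$ this is easy. Since $C(\theta)\ge 0$ and $C$ is nondecreasing, we have $C(\theta)=0$ there. Moreover $\bE_x[e^{\theta A_t^\mu}]\ge e^{\theta\,\bE_x[A_t^\mu]}$ by Jensen, and $\bE_x[A_t^\mu]\le \|G\mu\|_\infty<\infty$ by \eqref{eq:SV-2}. Hence the liminf is $\ge 0$. So we may assume $\theta>0$.

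For $\theta>0$ I would localize using Lemma \ref{lem:C_BR_limit}. Fix $\varepsilon>0$ and choose $R$ with $C^{B(R)}(\theta)>C(\theta)-\varepsilon$. Consider the part process of $\bfG^\alpha$ killed upon exiting $B(R)$, perturbed by $\theta\mu|_{B(R)}$. On the bounded set $B(R)$, the embedding of $\cF^{B(R)}$ into $L^2(B(R))$ is compact by Lemma \ref{lem:compact_embedding}, and $\mu\in\cK_\infty$ gives form-boundedness. Therefore the bottom of the spectrum $-C^{B(R)}(\theta)$ is an isolated eigenvalue. As in Theorem \ref{thm:cpt}, using irreducibility and the strong Feller property of the killed process inherited from \cite{tsuchida2026GSPSF}, it has a ground state $\varphi_R\ge0$ with $\|\varphi_R\|_2=1$ that is strictly positive on $B(R)$. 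The spectral theorem then gives
\begin{align*}
  \int_{B(R)}\varphi_R\, P_t^{\theta\mu,B(R)}\varphi_R\,dx = e^{tC^{B(R)}(\theta)}.
\end{align*}
Since the killed semigroup is dominated by the full one, $\bE_x[e^{\theta A_t^\mu}]\ge P_t^{\theta\mu,B(R)}1(x)$.

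The main obstacle is converting this $L^2$ identity into a pointwise lower bound at an arbitrary $x$, because ultracontractivity is unavailable. I would proceed as follows.
\begin{enumerate}[(a)]
  \item Use the Markov property at time $1$:
  \begin{align*}
    \bE_x[e^{\theta A_{t+1}^\mu}] \ge \bE_x\!\left[\bE_{X_1}[e^{\theta A_t^\mu}]\right] \ge \int p_1(x,y)\,P_t^{\theta\mu,B(R)}1(y)\,dy,
  \end{align*}
  dropping $e^{\theta A_1^\mu}\ge1$.
  \item Note that $p_1(x,\cdot)$ is strictly positive and integrable. Use $\varphi_R\le \|\varphi_R\|_\infty 1_{B(R)}$, with boundedness coming from the argument of Theorem \ref{thm:cpt} / Class (T). This gives
  \begin{align*}
    P_t^{\theta\mu,B(R)}1 \ge \|\varphi_R\|_\infty^{-1}P_t^{\theta\mu,B(R)}\varphi_R.
  \end{align*}
  \item Since $P_t^{\theta\mu,B(R)}\varphi_R=e^{tC^{B(R)}(\theta)}\varphi_R$ almost everywhere, conclude
  \begin{align*}
    \bE_x[e^{\theta A_{t+1}^\mu}] \ge \|\varphi_R\|_\infty^{-1}e^{tC^{B(R)}(\theta)}\int p_1(x,y)\varphi_R(y)\,dy,
  \end{align*}
  where the last integral is a strictly positive constant independent of $t$.
\end{enumerate}

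Taking $\frac1t\log$ and letting $t\to\infty$ gives $\liminf\ge C^{B(R)}(\theta)>C(\theta)-\varepsilon$. Since $\varepsilon$ is arbitrary, combining with the upper bound shows that the limit $\Lambda(\theta)$ exists for every $x$ and equals $C(\theta)$.

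The delicate points I expect are the following. First, the killed process must have a bounded ground state, which needs the Class (T) property of \cite{Takeda2019} applied to the part on $B(R)$ rather than ultracontractivity. Second, the identity $P_t\varphi_R=e^{tC^{B(R)}}\varphi_R$ holds only almost everywhere. Integrating it against the absolutely continuous kernel $p_1(x,\cdot)$ removes any exceptional set, just as in Proposition \ref{prop:Harris}.
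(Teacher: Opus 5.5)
Your proposal is correct and follows the paper's proof. The upper bound comes from Lemmas \ref{lem:p-ind-1} and \ref{lem:p-ind-2}, and the lower bound comes from killing on $B(R)$, bounding $P_t^{\theta\mu,B(R)}1$ below by $P_t^{\theta\mu,B(R)}$ applied to the normalized bounded ground state, and letting $R\to\infty$ via Lemma \ref{lem:C_BR_limit}. The only deviation is your time-$1$ Markov smoothing, which needs the eigenfunction identity only almost everywhere, so you avoid the continuity and pointwise positivity of the ground state at $x$ (and the requirement $x\in B(R)$) that the paper invokes; the cost is that you must drop $e^{\theta A_1^\mu}\ge 1$, which forces $\theta>0$ and hence your separate Jensen argument for $\theta\le 0$, which is valid.
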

\begin{proof}
  By Lemma \ref{lem:p-ind-1} and Lemma \ref{lem:p-ind-2},
  we already have the upper bound:
\begin{align}\label{eq:upper_bound}
  \Lambda(\theta)
  \le \limsup_{t\to\infty} \frac{1}{t} \log \mathbb{E}_x[\exp(\theta A_t^\mu)]
  &\le -\lambda_\infty(\theta \mu) = C(\theta).
\end{align}

To establish the lower bound, fix an arbitrary starting point $x \in \mathbb{R}^d$.
We use the killed process on a bounded domain 
to ensure the compactness of the semigroup. 
For any $R > |x|$, let $B(R)$ be the open ball of radius $R$ centered at the origin, 
and let $\tau_{B(R)}$ be the first exit time from $B(R)$. 
Let $P_t^{\theta\mu, B(R)}$ be the Feynman-Kac semigroup killed upon leaving $B(R)$.
Then it holds that
\begin{align}
\label{eq:86}
  C^{B(R)}(\theta) = \lim_{t \to \infty} \frac{1}{t} \log \|P_{t}^{\theta \mu, B(R)}\|_{2,2}.
\end{align}

By Lemma \ref{lem:compact_embedding}, the operator $P_t^{\theta\mu, B(R)}$ is compact on $L^2(B(R), dx)$, yielding a ground state $h_R \in L^2(B(R), dx)$. 
Although the GSP lacks ultracontractivity, we can deduce that $h_R$ is bounded by using its fine continuity and the boundedness of the Kato class potential (similarly to \cite[Lemma 4.9]{TakedaTsuchida2007}). 
Once the boundedness of $h_R$ is established, the strong Feller property of the GSP ensures that $h_R$ has a continuous version. 
Furthermore, the irreducibility guarantees that $h_R$ is strictly positive on $B(R)$.
Let $M_R = \|h_R\|_{L^\infty(B(R))} < \infty$. 
Since $1 \ge h_R(y)/M_R$ for all $y \in B(R)$, 
we have the trivial lower bound for any fixed $x \in B(R)$:
\begin{align}
  \mathbb{E}_x[\exp(\theta A_t^\mu)]
  &\ge \mathbb{E}_x[\exp(\theta A_t^\mu),\ t < \tau_{B(R)}] 
  = P_t^{\theta\mu, B(R)} 1(x) \\
  &\ge P_t^{\theta\mu, B(R)} \left(\frac{h_R}{M_R}\right)(x)
    = \frac{1}{M_R} e^{t C^{B(R)}(\theta)} h_R(x). 
\end{align}
Since $h_R(x) > 0$, taking the logarithm, dividing by $t$, and letting $t \to \infty$ yields
\begin{align}
  \liminf_{t\to\infty} \frac{1}{t} \log \mathbb{E}_x[\exp(\theta A_t^\mu)] &\ge C^{B(R)}(\theta).
\end{align}
By taking $R \to \infty$ and using Lemma \ref{lem:C_BR_limit}, we obtain the global lower bound:
\begin{align}\label{eq:lower_bound}
 \Lambda(\theta) \ge \liminf_{t\to\infty} \frac{1}{t} \log \mathbb{E}_x[\exp(\theta A_t^\mu)] &\ge C(\theta).
\end{align}

Combining the upper bound \eqref{eq:upper_bound} and the lower bound \eqref{eq:lower_bound}, 
we conclude that the limit exists and 
\begin{align}
  \Lambda(\theta) = \lim_{t\to\infty} \frac{1}{t} \log \mathbb{E}_x[\exp(\theta A_t^\mu)]
  &= C(\theta).
\end{align}
This completes the proof.
\end{proof}

Finally, as an application of differentiability of spectral function,
we get the following large deviation principle.

\begin{theorem}
  \label{thm:LDP}
  Assume that  $\alpha < d \le 2\alpha$ and $\mu \in \mathcal{K}_{\infty}$. 
  The large deviation principle for $A_{t}^{\mu}$ holds:
  for any open set $G$ and closed set $F$ in $\bR$, 
  \begin{align}
  \label{eq:65}
    & \liminf_{t \to \infty} \frac{1}{t} \log \bP_{x} \left( \frac{A_{t}^{\mu}}{t} \in G \right)
      \ge -\inf_{\lambda \in G} I(\lambda), \\
    & \limsup_{t \to \infty} \frac{1}{t} \log \bP_{x} \left( \frac{A_{t}^{\mu}}{t} \in F \right)
      \le -\inf_{\lambda \in F} I(\lambda), 
  \end{align}
  where $I(\lambda)$ is the Fenchel-Legendre transform of $C(\theta)$, that is,
  \begin{align}
  \label{eq:66}
  I(\lambda) = \sup_{\theta \in \bR} \{\lambda \theta - C(\theta)\}.
  \end{align}
\end{theorem}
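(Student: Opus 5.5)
The plan is to apply the G\"artner--Ellis theorem to the family $\{A_t^\mu/t\}_{t>0}$ under $\bP_x$. By Theorem \ref{thm:LMGFSP}, for every $x\in\bR^d$ and every $\theta\in\bR$ the limit
\begin{align*}
  \Lambda(\theta)=\lim_{t\to\infty}\frac1t\log\bE_x\bigl[\exp(\theta A_t^\mu)\bigr]
\end{align*}
exists and equals $C(\theta)$. By Lemma \ref{lem:spectral_properties}, $C$ is finite, convex and continuous on all of $\bR$. Hence the effective domain $\mathcal{D}_\Lambda$ is all of $\bR$, so $0$ lies in its interior. It follows that the upper bound for closed sets holds with rate function $I=C^*$, the Fenchel--Legendre transform in \eqref{eq:66}, with no further work.

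For the lower bound on open sets, G\"artner--Ellis requires $\Lambda$ to be essentially smooth. Since $\mathcal{D}_\Lambda=\bR$, the steepness condition is vacuous, and only differentiability of $C$ on all of $\bR$ remains to be checked. I would cover $\bR$ by three regions.
\begin{itemize}
\item[(a)] For $\theta<\theta_c$, $C\equiv0$. This follows from the definition \eqref{eq:41} of $\theta_c$, monotonicity of $C$ (it is nondecreasing in $\theta$ since $\mu\ge0$), and $C\ge0$. For $\theta\le0$ the same conclusion is immediate.
\item[(b)] For $\theta>\theta_c$, $-C(\theta)$ is an isolated principal eigenvalue, so $C$ is differentiable with $C'(\theta)=\int u_\theta^2\,d\mu$, as in the first part of the proof of Theorem \ref{thm:main}.
\item[(c)] At $\theta=\theta_c$, the left derivative is $0$ by (a). The right derivative is $0$ by Theorem \ref{thm:main}, which uses the hypothesis $\alpha<d\le 2\alpha$ (null criticality, Proposition \ref{prop:null}).
\end{itemize}
Theorem \ref{thm:main} is stated for $\mu\in\cS_\infty$, and Proposition \ref{prop:KS} gives $\cS_\infty=\cK_\infty$ for the GSP, so the present assumption $\mu\in\cK_\infty$ suffices. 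Thus $C\in C^1(\bR)$ by convexity and differentiability, and $\Lambda$ is essentially smooth and lower semicontinuous.

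With these ingredients the G\"artner--Ellis theorem (e.g.\ Dembo--Zeitouni, Theorem 2.3.6) yields both inequalities in \eqref{eq:65} with the good convex rate function $I$. This holds for each fixed starting point $x$, since Theorem \ref{thm:LMGFSP} is pointwise in $x$.

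The only delicate point is the differentiability at $\theta_c$, and it is entirely absorbed by Theorem \ref{thm:main}. I would take care only to note explicitly that the left derivative vanishes because $C$ is identically zero on $(-\infty,\theta_c]$. This in turn needs the monotonicity of $C$, which is read off from the variational formula \eqref{eq:spectral_sup} since $\int u^2\,d\mu\ge0$.
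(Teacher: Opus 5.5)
Your proposal is correct and follows the same route as the paper. You identify $\Lambda=C$ via Theorem \ref{thm:LMGFSP}, take differentiability of $C$ from Theorem \ref{thm:main}, and then apply the G\"artner--Ellis theorem. You also make explicit three points the paper leaves implicit: that the effective domain is all of $\bR$ (so the steepness condition is vacuous), that $C \equiv 0$ on $(-\infty,\theta_c]$ (so the left derivative at $\theta_c$ vanishes), and that the proof of Theorem \ref{thm:main} only checks the right derivative at $\theta_c$.
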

\begin{proof}
  By Theorem \ref{thm:LMGFSP}, we know that the logarithmic moment generating function
  $\Lambda(\theta)$ coincides with the spectral function $C(\theta)$.
  Hence, by Theorem \ref{thm:main}, $\Lambda(\theta)$ is finite and
  differentiable on $\bR$.
  The G\"artner-Ellis theorem (see \cite[Theorem 2.3.6]{DemboZeitouni2009})
  implies that the large deviation principle for $A_{t}^{\mu}$ holds.
\end{proof}

\end{document}